\title{Invariants of $r$-spin TQFTs and non-semisimplicity}
\author{Nils Carqueville \hspace{1em} Ehud Meir \hspace{1em} L\'or\'ant Szegedy}
\tikzstyle{1function}=[fill=white, draw=black, shape=rectangle, minimum width=0.75cm, minimum height=1 cm]
\tikzstyle{2function}=[fill=white, draw=black, shape=rectangle, minimum width=1cm, minimum height=1 cm]
\tikzstyle{3function}=[fill=white, draw=black, shape=rectangle, minimum width=1.5cm, minimum height=1cm]
\tikzstyle{multi function}=[fill=white, draw=black, shape=rectangle, minimum width=5cm, minimum height=1cm]
\tikzstyle{multi function small}=[fill=white, draw=black, shape=rectangle, minimum width=4cm, minimum height=1cm]
\tikzstyle{Multi function smaller}=[fill=white, draw=black, shape=rectangle, minimum width=3.5 cm, minimum height=1 cm]
\tikzstyle{thickedge}=[-, line width=2.5mm]
\numberwithin{equation}{section}
\theoremstyle{definition}
\newtheorem{theorem}{Theorem}[section]
\newtheorem{lemma}[theorem]{Lemma}
\newtheorem{proposition}[theorem]{Proposition}
\newtheorem{corollary}[theorem]{Corollary}
\newtheorem{definition}[theorem]{Definition}
\newtheorem{example}[theorem]{Example}
\newtheorem{remark}[theorem]{Remark}
\newtheorem{notation}[theorem]{Notation}
\newcommand{\ot}{\otimes}
\newcommand{\C}{\mathcal{C}}
\newcommand{\N}{\mathbb{N}}
\newcommand{\lra}{\xrightarrow}
\newcommand{\one}{\textbf{1}}
\newcommand{\Z}{\mathbb{Z}}
\newcommand{\Zb}{\mathbb{Z}}
\newcommand{\Ker}{\operatorname{Ker}}
\newcommand{\End}{\operatorname{End}}
\newcommand{\Hom}{\operatorname{Hom}}
\newcommand{\Rb}{\mathbb{R}}
\newcommand{\Tr}{\operatorname{tr}}
\newcommand{\T}{\mathcal{T}}
\newcommand{\winf}{\mathbb{K}[X]_{\textrm{aug}}}
\newcommand{\Rep}{\text{Rep}}
\newcommand{\D}{\mathcal{D}}
\newcommand{\Nn}{\mathcal{N}}
\newcommand{\otl}{\underline{\ot}}
\renewcommand{\epsilon}{\varepsilon}
\newcommand{\Bord}{\operatorname{Bord}}
\newcommand{\Bordr}{\operatorname{Bord}^r_2}
\newcommand{\Vect}{\operatorname{Vect}}
\newcommand{\SVect}{\operatorname{SVect}}
\renewcommand{\k}{\mathbb{K}}
\def\lmt{\longmapsto}
\def\lra{\longrightarrow}
\renewcommand{\mod}[1]{\ \operatorname{mod}\ #1}
\newcommand{\Spin}{\operatorname{Spin}}
\newcommand{\SO}{\operatorname{SO}}
\newcommand{\Zc}{\mathcal{Z}}
\renewcommand{\le}{\leqslant}
\renewcommand{\geq}{\geqslant}
\renewcommand{\leq}{\leqslant}
\renewcommand{\to}{\longrightarrow}
\renewcommand{\mapsto}{\longmapsto}
\newcommand{\E}{\mathcal{E}}
\newcommand{\bigboxtimes}{\mathop{\vcenter{\hbox{\scalebox{2.00}{$\boxtimes$}}}}\limits}
\tikzset{
	string/.style={draw=#1, postaction={decorate}, decoration={markings,mark=at position .51 with {\arrow[draw=#1]{>}}}},
	costring/.style={draw=#1, postaction={decorate}, decoration={markings,mark=at position .51 with {\arrow[draw=#1]{<}}}},
	ostring/.style={draw=#1, postaction={decorate}, decoration={markings,mark=at position .47 with {\arrow[draw=#1]{>}}}},
	ustring/.style={draw=#1, postaction={decorate}, decoration={markings,mark=at position .56 with {\arrow[draw=#1]{>}}}},
	oostring/.style={draw=#1, postaction={decorate}, decoration={markings,mark=at position .43 with {\arrow[draw=#1]{>}}}},
	uustring/.style={draw=#1, postaction={decorate}, decoration={markings,mark=at position .59 with {\arrow[draw=#1]{>}}}},
	directed/.style={string=blue!50!black}, 
	odirected/.style={ostring=blue!50!black}, 
	udirected/.style={ustring=blue!50!black}, 
	oodirected/.style={oostring=blue!50!black}, 
	uudirected/.style={uustring=blue!50!black},     
	redirected/.style={costring= blue!50!black},
	redirectedgreen/.style={costring= green!50!black},
	directedgreen/.style={string= green!50!black},
}
\tikzset{-dot-/.style={decoration={
			markings,
			mark=at position 0.5 with {\fill circle (2pt);}},postaction={decorate}}}
\tikzset{
	Fdot/.style={circle, draw, fill, inner sep=0pt}, 
	Odot/.style={circle, draw, inner sep=0.1pt, minimum size=0.1cm}
}
\address{Universit\"at Wien, Fakult\"at f\"ur Physik, Boltzmanngasse 5, 1090 Wien, \"{O}sterreich}
	\email{nils.carqueville@univie.ac.at}
\address{Institute of Mathematics, University of Aberdeen, Fraser Noble Building, Aberdeen AB24 3UE, UK}
	\email{meirehud@gmail.com}
\address{Universit\"at Wien, Fakult\"at f\"ur Physik, Boltzmanngasse 5, 1090 Wien, \"{O}sterreich}
\email{lorant.szegedy@univie.ac.at}
\g@addto@macro\bfseries{\boldmath}
\begin{document}

\maketitle

\begin{abstract}
	For a positive integer~$r$, an $r$-spin topological quantum field theory is a 2-dimensional TQFT with tangential structure given by the $r$-fold cover of $\SO_2$. 
	In particular, such a TQFT assigns 
	a scalar invariant to every closed $r$-spin surface~$\Sigma$. 
	Given a sequence of scalars indexed by the set of diffeomorphism classes of all such~$\Sigma$, we construct a symmetric monoidal category~$\mathcal C$ and a $\mathcal C$-valued $r$-spin TQFT which reproduces the given sequence. 
	We also determine when such a sequence arises from a TQFT valued in an abelian category with finite-dimensional Hom spaces. 
	In particular, we construct TQFTs with values in super vector spaces that can distinguish all diffeomorphism classes of $r$-spin surfaces, and we show that the associated algebras are necessarily non-semisimple. 
\end{abstract}

\vfill 

\tableofcontents

\newpage

\section{Introduction}
\label{sec:Introduction}

A basic classification result in topological quantum field theory (TQFT) states that 2-dimensional oriented \textsl{closed} TQFTs are equivalent to commutative Frobenius algebras. 
Similarly explicit characterisations of oriented closed TQFTs are available also in dimensions~1 and~3, but not in higher dimension. 
The cobordism hypothesis offers a general classification of \textsl{fully extended} TQFTs in arbitrary dimension and for arbitrary tangential structures, while there are relatively few classification results for closed TQFTs with tangential structures other than orientations. 
In the present paper we are concerned with one of the exceptions in dimension~2, namely \textsl{$r$-spin TQFTs}. 

Recall that for a positive integer~$r$, the $r$-spin group $\Spin^r_2$ is the $r$-fold cover of the rotation group $\SO_2$, and that an $r$-spin surface is a surface equipped with a $\Spin^r_2$-principal bundle together with a bundle map to the oriented orthonormal frame bundle. 
There is a symmetric monoidal category $\Bord^r_2$ whose morphisms are equivalence classes of $r$-spin bordisms, which for $r=1$ recovers the familiar category of 2-dimensional oriented bordisms (see Section~\ref{sec:rspin} for more details). 
An $r$-spin TQFT with values in a given symmetric monoidal category~$\C$ is by definition a symmetric monoidal functor $\Bord^r_2 \lra \C$. 
As shown in \cite{Szegedy-Stern}, such TQFTs are classified by ``closed $\Lambda_r$-Frobenius algebras'' in~$\C$, which we review in Definition~\ref{def:ClosedRFrobeniusAlgebra} below. 

A closed $r$-spin surface~$\Sigma$ can be viewed as a morphism $\varnothing\to \varnothing$ in $\Bord^r_2$. 
Evaluating an $r$-spin TQFT $\Zc\colon\Bord^r_2\to \C$ on~$\Sigma$ produces the invariant $\Zc(\Sigma)\in \End_{\C}(\one)$. 
If moreover $\End_{\C}(\one)\cong \k$ for some field~$\k$ of characteristic~0, then $\Zc(\Sigma)$ gives a scalar invariant of~$\Zc$. 
Hence in this case, every TQFT $\Zc\colon\Bord^r_2\to \C$ gives rise to a sequence of scalars $(\Zc(\Sigma))_\Sigma$, where~$\Sigma$ runs over all diffeomorphism classes of $r$-spin surfaces. 

\medskip 

The main question that will be addressed in this paper is what kind of sequences $(\Zc(\Sigma))_\Sigma \subset \k$ arise from different $r$-spin TQFTs. 
Before we give an answer in Theorem~\ref{thm:main} below, we consider the more specific question of whether there are $r$-spin TQFTs with algebraic targets~$\C$ that can distinguish all morphisms in $\Bord_2^r$. 
In particular, it turns out that $\C = \SVect_{\k}$ is good enough for the second question. 
The choice $\C = \Vect_{\k}$ is not quite sufficient, but the only \textsl{super} algebra~$A$ needed is the one related to the Arf invariant (see Section~\ref{sec:Ex_A}). 
Indeed, in Section~\ref{sec:Ex_dist} we prove our first main result: 

\begin{theorem}
	\label{thm:semimain}
	For every integer~$r$ there are non-semisimple closed $\Lambda_r$-Frobenius algebras~$B$ (for~$r$ odd) and~$C$ (for~$r$ even) in $\Vect_{\k}$, constructed in Sections~\ref{sec:Ex_B} and~\ref{sec:Ex_C}. 
	Direct sums and tensor products of these algebras with the Arf algebra~$A$ correspond to $r$-spin TQFTs valued in $\SVect_{\k}$ that have distinct invariants for all $r$-spin surfaces. 
\end{theorem}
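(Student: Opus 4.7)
The plan is to split the argument into (i) constructing, for each parity of $r$, a non-semisimple closed $\Lambda_r$-Frobenius algebra $B$ (odd $r$) or $C$ (even $r$) in $\Vect_\k$, and (ii) showing that the scalar invariants attached to direct sums and tensor products of these with the Arf algebra $A \in \SVect_\k$ of Section~\ref{sec:Ex_A} form a complete invariant of closed $r$-spin surfaces.

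For step (i), I would look for small concrete models with a nilpotent element forcing non-semisimplicity---for instance a truncated polynomial algebra $\k[x]/(x^n)$ equipped with a carefully chosen Frobenius form and a distinguished endomorphism playing the role of the $\Lambda_r$-action required by Definition~\ref{def:ClosedRFrobeniusAlgebra}. The parity of $r$ dictates how that action must interact with the Arf-type data: the odd-$r$ algebra $B$ can ignore Arf entirely, whereas the even-$r$ algebra $C$ should be designed so that its invariants are insensitive to the Arf class (which is already detected by $A$) but sensitive to the remaining classification data. In both cases the closed $\Lambda_r$-Frobenius axioms translate into finitely many polynomial equations in the parameters $(n,\varepsilon,\text{Nakayama-type automorphism})$, which I would solve case by case.

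For step (ii), I would use the classification of closed $r$-spin surfaces up to diffeomorphism (by genus $g$ subject to $r\mid 2g-2$, Arf invariant for even $r$, and residual mapping class group data) together with the generators-and-relations presentation of $\Bord^r_2$ from \cite{Szegedy-Stern} to compute $\mathcal{Z}_A(\Sigma)$, $\mathcal{Z}_B(\Sigma)$ and $\mathcal{Z}_C(\Sigma)$ as explicit scalars in these invariants. The standard Frobenius-algebra recipe yields $\mathcal{Z}_{A\oplus X}(\Sigma) = \mathcal{Z}_A(\Sigma) + \mathcal{Z}_X(\Sigma)$ and $\mathcal{Z}_{A\otimes X}(\Sigma) = \mathcal{Z}_A(\Sigma)\cdot\mathcal{Z}_X(\Sigma)$ for $X \in \{B,C\}$, so knowing both the sum and the product across all $\Sigma$ recovers the unordered pair $(\mathcal{Z}_A(\Sigma),\mathcal{Z}_X(\Sigma))$. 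Hence the theorem reduces to showing that the joint invariant $(\mathcal{Z}_A,\mathcal{Z}_X)$ separates diffeomorphism classes, which follows once $\mathcal{Z}_A$ is shown to separate the Arf data and $\mathcal{Z}_X$ to separate the remaining data.

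The principal obstacle will be calibrating $B$ and $C$ so that $\mathcal{Z}_B$, respectively $\mathcal{Z}_C$, really does distinguish the non-Arf classification data for \emph{all} admissible genera: concretely, tracking how the Nakayama-type automorphism built into the $\Lambda_r$-structure interacts with the Dehn twist generators that realise the mapping class group action on $r$-spin structures, and ensuring the resulting scalar sequence is injective on the set of $(g,\text{orbit})$-pairs satisfying $r\mid 2g-2$. Verifying the $\Lambda_r$-Frobenius axioms for the chosen algebras is a finite but delicate computation; the harder, more conceptual part is tuning the parameters so that the invariants they produce are provably pairwise distinct on the full classification data.
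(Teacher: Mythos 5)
Your high-level strategy---build non-semisimple closed $\Lambda_r$-Frobenius algebras in $\Vect_\k$, combine them with the Arf algebra via $\oplus$ and $\otl$, and exploit the additivity/multiplicativity of invariants under these operations (Lemma~\ref{lem:plustimes})---does match the paper's. But there is a genuine gap at the heart of the argument: you never produce algebras whose torus invariants separate the $\#\{d : d\mid r\}$ diffeomorphism classes of genus-one $r$-spin surfaces, and the candidates you sketch cannot do so. By~\eqref{eq:betaInvariants} the torus invariant $\beta_d$ is the quantum dimension of the degree-$d$ component $C_d$, so any algebra pulled back from $r=1$ (such as $\k[x]/(x^n)$ with a Frobenius form, which is the paper's algebra $F$ for $n=2$) has all $\beta_d$ equal and separates nothing at genus one; even the paper's purpose-built $B^{(r)}$ only separates $d=r$ from all other divisors, since $\beta_d^B=1$ for $d\neq r$ and $\beta_r^B=r+1$ (Lemma~\ref{lem:roddinvariants}). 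The step you defer as ``tuning the parameters so that the invariants are provably pairwise distinct'' is precisely the nontrivial content of the theorem. The paper resolves it in Section~\ref{sec:Ex_dist} (Proposition~\ref{prop:inv-D-distinct}) by taking a direct sum $D=\bigoplus_i (D^{(p_i)})^{\oplus c_i}$ with $D^{(p_i)}=\bigoplus_m P^*_{r,p_i^m}(B^{(p_i^m)})$ running over all prime powers dividing $r$, with multiplicities chosen recursively via $c_{i+1}=c_i\,(\beta_r^{D^{(p_i)}}+1)$, so that a mixed-radix argument modulo $c_{i+1}$ shows $\beta_d^D\neq\beta_e^D$ for distinct divisors $d\neq e$. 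Nothing in your proposal substitutes for this construction, nor for the explicit verification that $B$ and $C$ satisfy the closed $\Lambda_r$-Frobenius axioms (Lemmas~\ref{lem:Bfrobenius} and~\ref{lem:Cfrobenius}), which you also leave as a ``finite but delicate computation.''

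Two smaller points. The Arf algebra $A$ exists only for $r$ even, so for odd $r$ your formulas $\Zc_{A\oplus X}$ and $\Zc_{A\otimes X}$ are vacuous; in that case Theorem~\ref{thm:classification} leaves only the torus classes to separate and $D$ alone suffices. And your ``recover the unordered pair from sum and product'' device uses two TQFTs jointly, which is both unnecessary and beside the point: in the paper, $A$ already separates the two genus-$g$ classes for $r$ even via $(\alpha_n^\pm)^A=\pm 2^{-nr/2}$, the $\beta_d$ are handled by $D$, and Lemma~\ref{lem:plustimes} combines these into a single TQFT as the theorem asserts.
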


Put differently, we explicitly construct TQFTs in super vector spaces that can distinguish all diffeomorphism classes of $r$-spin surfaces. 
We also prove (in Section~\ref{subsec:Semisimplicity}) that non-semisimplicity is a necessary condition to produce TQFTs that can distinguish all $r$-spin structures (if~$r>2$). 
This is another illustration that non-semisimple TQFTs are known or expected to resolve finer structure.\footnote{For example, the categorified link invariants of \cite{kr0401268} are computed internal to non-semisimple extended TQFTs of Landau--Ginzburg type (cf.\ \cite[Thm.\,3.7\,\&\,Ex.\,3.13]{CMM}), and semisimple TQFTs are only sensitive to stable diffeomorphism classes as shown in \cite{ReutterSchommerPries2022}.} 

\medskip 

The target $\SVect_{\k}$ is however not good enough to achieve the slightly more ambitious goal of constructing an $r$-spin TQFT~$\Zc$ such that the sequence $(\Zc(\Sigma))_\Sigma$ recovers an arbitrary given allowed sequence of invariants. 
To describe a solution to this problem, and to explain what ``allowed'' means here, we first need to recall the classification of diffeomorphism classes of connected $r$-spin surfaces (see also Theorem~\ref{thm:classification}). 
For $g\geq 0$, let $\Sigma_g$ denote such a closed surface of genus~$g$. 
Then $\Sigma_g$ admits an $r$-spin structure if and only if $2-2g=0$ mod~$r$. 
For $g=0$, the sphere $\Sigma_0$ admits a unique diffeomorphism class of an $r$-spin structure when~$r$ is~1 or~2. 
For $g=1$, there are up to diffeomorphism precisely as many $r$-spin structures as there are divisors~$d$ of~$r$, and we denote the associated invariants by $\beta_d^{\Zc}$. 
For $g>1$ such that $2-2g=0$ mod~$r$, $\Sigma_g$ admits a single $r$-spin diffeomorphism class in case~$r$ is odd, and we write the corresponding invariant as $\alpha_n^{\Zc}$, where $g=nr+1$ for some positive integer~$n$ satisfies $2-2g=0$ mod~$r$. 
If $r$ is even and $g$ satisfies the condition, then up to diffeomorphism there are two $r$-spin structures on $\Sigma_g$, and we write $(\alpha^{+}_n)^{\Zc}$ and $(\alpha^{-}_n)^{\Zc}$ for the invariants that correspond to these two structures, where $g=nr/2+1$ for the appropriate integer~$n$. 
(Section~\ref{subsec:InvariantsFromClosedLambdaRFrobeniusAlgebras} has more details.)

In Section~\ref{sec:symmcat} we construct a family of symmetric monoidal categories, showing that if we do not impose conditions on the target category $\C$, then every sequence can arise from an $r$-spin TQFT. 
Our construction uses that of the symmetric monoidal categories $\C_{\chi}$ which first appeared in \cite{meir21}.
We would like to explain what happens when we restrict the target category. 
Using the terminology of \cite{meir21}, we call an additive $\k$-linear symmetric monoidal category $\C$ \textsl{good} if it is abelian, rigid, and has finite-dimensional Hom spaces. 
The second main theorem in the present paper then is the following:

\begin{theorem}
\label{thm:main}
\begin{enumerate}
\item 
Let $r\in\Z_{\geqslant 1}$ be odd, and let $((b_d)_{d|r},a_1,a_2,\ldots)$ be an infinite sequence in~$\k$. 
There is a good category~$\C$ and an $r$-spin TQFT $\Zc\colon\Bord^r_2\to\C$ such that $\alpha_n^{\Zc} = a_n$ and $\beta_d^\Zc = b_d$ if and only if (for some formal variable~$X$)
\begin{equation}
\label{eq:SpanCondition1}
\sum_{n\in\Z_{\geqslant 1}} a_nX^n
	\;\in\; 
	\operatorname{span}\Big\{\frac{1}{1-\lambda X}\Big\}_{\lambda\in \k} \, . 
\end{equation}
\item 
Let $r\in\Z_{\geqslant 1}$ be even, and let $((b_d)_{d|r},a_1^+,a_1^-,a_2^+,a_2^-,\ldots)$ be an infinite sequence in~$\k$. 
There is a good category~$\C$ and an $r$-spin TQFT $\Zc\colon\Bord^r_2\to\C$ such that $(\alpha_n^+)^{\Zc} = a_n^+$,  $(\alpha_n^-)^{\Zc} = a_n^-$ and $\beta_d^\Zc = b_d$ if and only if 
\begin{equation}
\label{eq:SpanCondition2}
\sum_{n\in\Z_{\geqslant 1}} a_n^{+}X^n\,,\; \sum_{n\in\Z_{\geqslant 1}} a_n^{-}X^n
	\;\in\; 
	\operatorname{span}\Big\{\frac{1}{1-\lambda X}\Big\}_{\lambda\in \k} \,.
\end{equation}
\end{enumerate}
\end{theorem}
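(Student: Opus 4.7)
The plan is to treat necessity and sufficiency separately, working through the equivalence (from~\cite{Szegedy-Stern}) between $\C$-valued $r$-spin TQFTs and closed $\Lambda_r$-Frobenius algebras in $\C$, combined with the explicit formulas for the genus-$g$ invariants $\alpha_n^{\Zc}$ and $(\alpha_n^\pm)^{\Zc}$ as categorical traces developed in Section~\ref{subsec:InvariantsFromClosedLambdaRFrobeniusAlgebras}.

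For the necessity, assume $\Zc\colon\Bord^r_2\to \C$ is an $r$-spin TQFT with $\C$ good. The trace formulas present $\alpha_n^{\Zc}$ (resp.\ $(\alpha_n^\pm)^{\Zc}$) as $\Tr_\C(H^n)$ for a single endomorphism $H$ (resp.\ $H^\pm$) of a dualizable object $V$ of $\C$, where $H$ combines the genus-one handle operator with a fixed power of the $\Lambda_r$-twist, and in the even case $H^\pm$ is restricted to the $\pm$-eigenspace of the order-$2$ involution coming from the mod-$2$ reduction of the $\Lambda_r$-structure. Since $\C$ is good, $\End_\C(V)$ is a finite-dimensional $\k$-algebra and $H$ satisfies a polynomial identity over $\k$, so the sequence $\Tr(H^n)$ obeys a linear recurrence. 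A primary/Jordan-type decomposition of $V$ combined with the fact that the categorical trace kills commutators (and hence the contributions of nilpotent parts of $H$) then yields $\Tr_\C(H^n)=\sum_i c_i\lambda_i^n$ for finitely many $c_i,\lambda_i\in \k$, placing the generating series in $\operatorname{span}\{1/(1-\lambda X)\}_{\lambda\in\k}$.

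For the sufficiency, given a sequence satisfying~\eqref{eq:SpanCondition1} (or~\eqref{eq:SpanCondition2}), write
\[
\sum_{n\geq 1} a_n X^n \;=\; \sum_{i=1}^N \frac{c_i}{1-\lambda_i X}
\]
with $c_i,\lambda_i\in \k$, so that $a_n=\sum_i c_i\lambda_i^n$ (the constant term being absorbed into the freedom of the $\beta_d$). I would realize the TQFT inside a target built from the categories $\C_\chi$ of~\cite{meir21}, refining the construction of Section~\ref{sec:symmcat}, in which one can produce simple objects $L_i$ of prescribed $\k$-scalar categorical dimension $c_i$ carrying a distinguished endomorphism with eigenvalue $\lambda_i$. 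Assembling $V=\bigoplus_i L_i$ with $H|_{L_i}=\lambda_i\cdot\id_{L_i}$ gives $\Tr_\C(H^n)=a_n$; the torus invariants $\beta_d$, being indexed by the finitely many divisors of $r$, are matched by choosing the $\Lambda_r$-action on $V$ (an order-$r$ automorphism) with the correct traces on the $L_i$, which is a finite system of linear conditions within the remaining freedom. For even $r$ the construction is carried out separately on $V^+$ and $V^-$ and combined as a $\Z/2$-graded object carrying a compatible $\Lambda_r$-structure whose mod-$2$ reduction realises the $\Z/2$-action.

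The main obstacle I expect is in the necessity direction: one must first read off from the Frobenius data a \emph{single} endomorphism $H$ whose powers record the $\alpha_n^{\Zc}$, which requires careful bookkeeping of how iterated handle compositions interact with the $\Lambda_r$-twists for high-genus surfaces with a chosen $r$-spin structure. The subtler point is controlling that the relevant spectrum lies in $\k$ rather than merely $\overline{\k}$, since a generic endomorphism in a good category can have eigenvalues in a non-trivial extension; the argument must exploit the fact that $H$ is essentially multiplication by a central element of the underlying Frobenius algebra, so its spectral decomposition lines up with the decomposition of the algebra into local factors and the contributions from non-$\k$-rational points can be absorbed into the $c_i$ using that categorical dimensions live in $\k=\End_\C(\one)$.
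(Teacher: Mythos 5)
Your overall architecture (necessity from a trace-rationality argument in a good category, sufficiency by assembling objects with prescribed spectra) parallels the paper's, but there are two genuine gaps. First, in the necessity direction your treatment of the even case is wrong as stated: $(\alpha_n^-)^{\Zc}$ is \emph{not} $\Tr((H^-)^n)$ for a single endomorphism restricted to an eigenspace of an involution. By~\eqref{eq:alphaEvenMinus} it is the mixed trace $\Tr(T_1^{n-1}\circ T_2)$ of two \emph{different} composites of handle maps, one built from $h^+$'s only and one with a single $h^-$ inserted. The paper needs Lemma~\ref{lem:identities} to establish $T_1T_2=T_2T_1$ and $T_1^2=T_2^2$ before it can diagonalise simultaneously and reduce to the single-operator case; without that input your claim does not get off the ground. (For the $\alpha_n$ and $\alpha_n^+$ series your sketch is essentially \cite[Cor.\,2.3]{meir21}, which the paper simply cites; note also that ``the categorical trace kills the nilpotent parts'' is not a formal triviality in an arbitrary good category and is part of what that reference supplies.)

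Second, and more seriously, your sufficiency argument does not construct anything. Producing ``simple objects $L_i$ of prescribed dimension carrying an endomorphism with eigenvalue $\lambda_i$'' is far from producing a closed $\Lambda_r$-Frobenius algebra: you must equip $V=\bigoplus_i L_i$ with $\mu$, $\Delta$, $\eta$, $\varepsilon$ satisfying \eqref{eq:clLrFa-co-associativity}--\eqref{eq:clLrFa-twist} and the deck relations, in such a way that the induced handle operator is your $H$ and the graded components have the prescribed quantum dimensions; moreover $\beta_d$ is the categorical dimension of $C_d$ (see~\eqref{eq:betaInvariants}), not the trace of an order-$r$ automorphism on $V$, so ``matching the $\beta_d$ by a finite linear system'' misidentifies what is being prescribed. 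The paper resolves exactly this by (a) proving that good characters form a $\k$-algebra under pointwise addition and multiplication, using $\oplus$ and $\otl$ of algebras inside Deligne products of good categories (Lemma~\ref{lem:plustimes}), and (b) exhibiting explicit algebras $E_\kappa$, $B^{(d)}$, $C^{(d)}$ and the Arf algebra $A$ whose characters span all characters satisfying \eqref{eq:SpanCondition1} resp.\ \eqref{eq:SpanCondition2}; verifying the axioms for $B$ and $C$ is a substantial computation (Lemmas~\ref{lem:Bfrobenius} and~\ref{lem:Cfrobenius}). Neither ingredient has a counterpart in your proposal, so the ``if'' direction remains unproved.
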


Notice that the theorem imposes conditions only on the values of~$\alpha$ (corresponding to surfaces of genus $g>1$), but not on~$\beta$ (corresponding to $g=1$).
The conditions~\eqref{eq:SpanCondition1} and~\eqref{eq:SpanCondition2} follow from a more general criterion on symmetric monoidal categories that are generated by an algebraic structure, constructed in \cite{meir21} and reviewed in Sections~\ref{subsec:BasicNotions}--\ref{subsec:CategoriesCchi}. 
Here we apply that theory to the algebraic structure of closed $\Lambda_r$-Frobenius algebras. 

To prove that any sequence of numbers that satisfies the conditions~\eqref{eq:SpanCondition1} or~\eqref{eq:SpanCondition2} comes from a TQFT, we will show that closed $\Lambda_r$-Frobenius algebras are closed under taking direct sums and a suitable form of tensor product. 
Indeed, Lemma~\ref{lem:plustimes} states that these operations commute nicely with taking invariants in the sense that for two closed $\Lambda_r$-Frobenius algebras~$C$ and~$D$ it holds that $\gamma^{C\oplus D} = \gamma^C+\gamma^D$ and $\gamma^{C\otl D} = \gamma^C\gamma^D$, where $\gamma$ is either of the invariants~$\alpha$ or~$\beta$ (where we use the equivalence of $r$-spin TQFTs and closed $\Lambda_r$-Frobenius algebras). 
This will reduce the proof of Theorem \ref{thm:main} to the construction of enough closed $\Lambda_r$-Frobenius algebras, which in turn is provided in Section~\ref{sec:ExamplesClosedLambdaRFrobeniusAlgebras}. 
Of particular importance are the above-mentioned algebras~$B$ and~$C$ in $\Vect_{\k}$ as well as the Arf algebra~$A$ in $\SVect_{\k}$. 

For $r=1$ the question of what invariants one gets was studied by Khovanov, Ostrik and Kononov in \cite{KOK}. 
Our results here can be considered as an extension of theirs. 
In Section~\ref{subsec:KOK} we explain how the categories constructed in \cite{KOK} are related to the categories we construct here, following \cite{meir21}. 

\medskip 

\noindent
\textbf{Acknowledgements. }
We thank 
	Christoph Schweigert 
for helpful comments on an earlier version of this manuscript. 
N.\,C.\ is supported by the DFG Heisenberg Programme, L.\,S.\ is supported by a DFG Walter Benjamin Fellowship.

\section{$r$-spin surfaces, TQFTs and invariants}\label{sec:rspin}
We summarise the notion of $r$-spin structures, $r$-spin bordisms and the category they form.
Then we consider $r$-spin topological quantum field theories and closed $\Lambda_r$-Frobenius algebras that classify them.
For a more detailed account see e.g.\ \cite{Szegedy-Stern}.

\subsection{$r$-spin surfaces}
\label{subsec:rSpinSurfaces}

For a positive integer $r$, the \textsl{$r$-spin group} is the $r$-fold covering of $\SO_2$:
\begin{align}
  \begin{aligned}
  \Spin_2^r=\SO_2&\lra \SO_2\\
  x&\lmt x^r
  \end{aligned}
  \label{eq:def:r-spin-group}
\end{align}
We call a smooth 2-dimensional manifold a \textsl{surface}. 
An \textsl{$r$-spin structure on a surface $\Sigma$} is a $\Spin_2^r$-principal bundle $P$ over $\Sigma$ together with a bundle map $q\colon P\lra F_{\Sigma}$
to the oriented orthonormal\footnote{Strictly speaking this requires the choice of a metric on $\Sigma$, for details we refer to e.g.\ \cite[Sect.\,2.2]{Szegedy:2018phd}.} frame bundle of $\Sigma$, which is equivariant with respect to \eqref{eq:def:r-spin-group} in the sense that $q(x.p)=x^r.q(p)$.
We call such a triple $(\Sigma,P,q)$ an \textsl{$r$-spin surface}.
We note that the map $q\colon P\lra F_{\Sigma}$ is a $\Z/r$-principal bundle.
Let $(P,q)$ and $(P',q')$ be $r$-spin structures over $\Sigma$. 
An \textsl{isomorphism of $r$-spin structures} $f\colon (P,q)\lra (P',q')$ is a bundle map such that the following diagram commutes:
\begin{equation}
  \begin{tikzcd}[column sep = small]
    P\ar{rr}{f}\ar{dr}[swap]{q}&&P'\ar{dl}{q'}\\
    &F_{\Sigma}&
  \end{tikzcd}
  \label{eq:iso-r-spin-str}
\end{equation}

\begin{example}
  Isomorphism classes of $r$-spin structures on a cylinder $S^1 \times \Rb$ are in bijection with $\Z/r$.
  We write $S^1_x$ for the cylinder with $r$-spin structure corresponding to $x\in\Z/r$.
  \label{ex:r-spin-circle}
\end{example}
Let $(\Sigma,P,q)$ and $(\Sigma',P',q')$ be $r$-spin surfaces.
A \textsl{morphism of $r$-spin surfaces} 
\begin{align}
\Phi\colon (\Sigma,P,q)\lra(\Sigma',P',q')
\end{align}
is a bundle map~$\Phi$ covering a local diffeomorphism $\phi$ such that 
\begin{equation}
  \begin{tikzcd}
    P\ar{r}{\Phi}\ar{d}[swap]{q}&P'\ar{d}{q'}\\
    F_{\Sigma}\ar{r}{(\mathrm{d}\phi)^*}\ar{d}{}& F_{\Sigma'}\ar{d}{}\\
    \Sigma\ar{r}[swap]{\phi}&\Sigma'
  \end{tikzcd}
  \label{eq:mor-r-spin-surf}
\end{equation}
commutes, where the map $(\mathrm{d}\phi)^*$ is induced on frames by the differential of $\phi$.
If~$\phi$ is a diffeomorphism, then we call~$\Phi$ a \textsl{diffeomorphism of $r$-spin surfaces}; if~$\phi$ is the identity on~$\Sigma$ (hence $\Sigma' = \Sigma$), then we call~$\Phi$ an \textsl{isomorphism}. 
It follows that every isomorphism is a diffeomorphism, and that the number of diffeomorphism classes or $r$-spin structures on a given~$\Sigma$ is bounded from above by the number of isomorphism classes of~$\Sigma$. 

Let $S_0$ and $S_1$ be two closed smooth 1-manifolds and fix $r$-spin structures $(P_i,q_i)$ on $S_i\times\Rb$ for $i\in \{0,1\}$.
Consider open neighbourhoods $U_i\subset S_i\times \Rb$ of $S_i$, set $U_i^+:=U_i\cap S_i\times \Rb_{\geqslant 0}$, $U_i^-:=U_i\cap S_i\times \Rb_{\leqslant 0}$, and let
$(P_i|_{U_i^{\pm}},q_i|_{U_i^{\pm}})$ be the $r$-spin structures restricted to $U_i^{\pm}$.
Let $\Sigma$ be a compact surface (with possibly nonempty boundary) with an $r$-spin structure $(P,q)$.
An \textsl{$r$-spin bordism} 
\begin{equation}
  (\Sigma,P,q)\colon(S_0 \times \Rb,P_0,q_0)\lra(S_1 \times \Rb,P_1,q_1)
  \label{eq:r-spin-bordism}
\end{equation}
consists of the data
\begin{equation}
  \begin{tikzcd}
    \Big(U_0^+, \, P_0\big|_{U_0^+}, \, q_0\big|_{U_0^+}\Big)
  \ar{r}{s_0^\text{in}}
  &(\Sigma,P,q)
  &\Big(U_1^-, \, P_1\big|_{U_1^-}, \, q_1\big|_{U_1^-}\Big)\,,
  \ar{l}[swap]{s_1^\text{out}}
  \end{tikzcd}
  \label{eq:r-spin-bordism-data}
\end{equation}
where the \textsl{boundary parametrisation maps} $s_i^\text{in/out}$ are morphisms of $r$-spin surfaces that identify $S_0\sqcup S_1$ with $\partial\Sigma$. 
We will sometimes write $\Sigma\colon S_0\lra S_1$ for an $r$-spin bordism in case the $r$-spin structures are understood.
A \textsl{diffeomorphism of $r$-spin bordisms} is a diffeomorphism of the compact $r$-spin surfaces that is compatible with the boundary parametrisation maps.

\begin{example}
  The disc $D^2\subset \Rb^2$ has a unique $r$-spin structure, since it is contractible.
  It can be an $r$-spin bordism in two ways:
  \begin{align}
      D^2\colon \varnothing\lra S^1_{+1}
      \quad\text{or}\quad
      D^2\colon S^1_{-1}\lra \varnothing \, ,
    \label{eq:disc-r-spin-bordism}
  \end{align}
  where we write~$S^1_{\pm 1}$ for the codomain or domain circle~$S^1 = \partial D^2$ with the $r$-spin structure induced from~$D^2$. 
\end{example}

Consider two $r$-spin bordisms 
\begin{equation}
  \begin{tikzcd}
  S_0\ar{r}{\Sigma_0}&S_1\ar{r}{\Sigma_1}&S_2
  \end{tikzcd}
  \label{eq:glue-1}
\end{equation}
where the domain of $\Sigma_1$ is equal to the codomain of $\Sigma_0$. 
We define the $r$-spin bordism
\begin{equation}
  \begin{tikzcd}[column sep=3.3em]
  S_0\ar{r}{\Sigma_1\circ\Sigma_0}&S_2
  \end{tikzcd}
  \label{eq:glue-2}
\end{equation}
by gluing the $r$-spin surfaces $\Sigma_0$ and $\Sigma_1$ along the boundary parametrisation maps $s_1^\text{out}$ and $s_1^\text{in}$. 

\begin{definition}
  The \textsl{$r$-spin bordism category} $\Bord_2^r$ consists of the following.
  An object is a closed smooth 1-manifold $S$ with an $r$-spin structure on $S\times \Rb$.
  A morphism $S_0\lra S_1$ is a diffeomorphism class of $r$-spin bordisms.
  Composition is given by gluing $r$-spin bordisms, and identity morphisms are given by cylinders whose boundary parametrisations are embeddings.
  \label{def:bord-r-spin}
\end{definition}

The category $\Bord_2^r$ has a standard monoidal structure given by disjoint union $\sqcup$ with monoidal unit the empty set~$\varnothing$, and a standard symmetric braided structure given by mapping cylinders over swap diffeomorphisms. 
We always view $\Bord_2^r$ as endowed with this symmetric monoidal structure. 

\medskip 

Objects of $\Bord_2^r$ are disjoint unions of $r$-spin circles~$S^1_a$, $a\in\Z/r$. 
We stress that morphisms in $\Bord_2^r$ are diffeomorphism classes, so two distinct \textsl{isomorphism} classes may correspond to the same morphism in $\Bord_2^r$. 
The following theorem explains this in detail for the case of closed $r$-spin surfaces, which by definition represent endomorphisms $\varnothing \lra \varnothing$ in $\Bord_2^r$. 
For more details see e.g.\ \cite[Thm.\,2.2.2]{Szegedy:2018phd} and the original references \cite{Geiges:2012rs,Randal:2014rs}.

\begin{theorem}\label{thm:classification}
  Let $\Sigma_g$ denote a closed connected surface of genus $g$.
  \begin{enumerate}
    \item There exist $r$-spin structures on $\Sigma_g$ if and only if
	\end{enumerate}
      \begin{equation}
	\chi(\Sigma_g)=2 - 2g \equiv 0 \mod{r}\,.
  \label{eq:thm:r-spin-surf-1}
      \end{equation}
  \label{thm:r-spin-surf-1}
Assume that \eqref{eq:thm:r-spin-surf-1} holds. Then
  \begin{enumerate}[resume]
\item 
  the number of isomorphism classes of $r$-spin structures on $\Sigma_g$ is $r^{2g}$, and
\item 
\label{item:DiffeoClasses}
  the number of diffeomorphism classes of $r$-spin structures on $\Sigma_g$ is as follows:
\begin{equation}
\begin{array}{r|c|c}
  g & \text{conditions}& \text{number of diffeomorphism classes}\\
  \hline
0&
r \in \{1,2\} &
 1\\
1&
\text{(none)}&
\#\big( \text{divisors of $r$}\big)\\
\geqslant 2&
r \text{ even}
& 2\\
&r \text{ odd}&
 1
  \end{array}
  \label{eq:thm:r-spin-surf-2}
\end{equation}
  \end{enumerate}
  \label{thm:r-spin-surf}
\end{theorem}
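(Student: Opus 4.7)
The plan is to treat the three items as a sequence of progressively harder obstruction-theoretic problems: existence of a lift, counting lifts up to bundle isomorphism, and counting lifts up to the larger equivalence generated by self-diffeomorphisms of $\Sigma_g$. Full details are in \cite{Geiges:2012rs,Randal:2014rs}, so I would only sketch the key ingredients.

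For (i), I would recognise an $r$-spin structure as a lift of the classifying map $\Sigma_g \to B\SO_2$ along the $r$-fold covering $B\Spin_2^r \to B\SO_2$ (which is again a $K(\Z,2)$, since both groups are $S^1$). Such a lift exists if and only if the Euler class $e(F_{\Sigma_g}) \in H^2(\Sigma_g,\Z) \cong \Z$ is divisible by $r$, i.e.\ $r$ divides $e(F_{\Sigma_g}) = \chi(\Sigma_g) = 2-2g$ by Poincar\'e--Hopf. For (ii), I would fix a base $r$-spin structure and observe that the set of all $r$-spin structures is a torsor over the group of $\Z/r$-principal bundles on $\Sigma_g$, whose isomorphism classes are parametrised by $H^1(\Sigma_g,\Z/r) \cong (\Z/r)^{2g}$; hence there are exactly $r^{2g}$ isomorphism classes.

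For (iii), the diffeomorphism classes are the orbits of the mapping class group $\operatorname{MCG}(\Sigma_g)$ on this torsor, and I would split by genus. When $g=0$, the existence condition forces $r\in\{1,2\}$ and the vanishing of $H^1$ makes the class unique. When $g=1$, $\operatorname{MCG}(T^2) \cong SL_2(\Z)$ acts on $H^1(T^2,\Z/r) \cong (\Z/r)^2$ via the standard representation; an $SL_2(\Z)$-orbit on $(\Z/r)^2$ is classified by the invariant $\gcd(a,b,r)$ of $(a,b)$, which ranges exactly over divisors of $r$. When $g\geqslant 2$, I would use the fact that an $r$-spin structure on $\Sigma_g$ induces a quadratic refinement of the mod-$r$ intersection form on $H_1(\Sigma_g,\Z/r)$, with an additional $\Z/2$-valued Arf-type refinement when $r$ is even. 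The mapping class group acts through the symplectic group preserving this refinement; for $r$ odd the action is transitive on refinements, while for $r$ even the Arf invariant mod~$2$ is the unique further obstruction, giving two orbits.

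The hard part is (iii) in the $g\geqslant 2$ case. This requires identifying the correct quadratic-refinement description of $r$-spin structures (the $r$-spin analogue of Johnson's theory for ordinary spin structures) and verifying that the symplectic group preserving such a refinement acts transitively on its nonempty level sets, from which the case split in \eqref{eq:thm:r-spin-surf-2} between $r$ even and $r$ odd follows. This transitivity is established in \cite{Randal:2014rs} by reducing, via a careful generator analysis, to the classical case $r=2$, at which point Atiyah's analysis of $\Spin$ structures on Riemann surfaces supplies the conclusion.
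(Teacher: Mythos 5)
The paper gives no proof of this theorem---it is recalled from the literature with pointers to \cite{Szegedy:2018phd,Geiges:2012rs,Randal:2014rs}---and your sketch is an accurate outline of exactly the argument found in those references: obstruction theory for existence, the $H^1(\Sigma_g,\Z/r)$-torsor structure for the isomorphism count, and mapping-class-group orbits (via $\gcd$ on $(\Z/r)^2$ for $g=1$ and the Arf-type invariant of Randal--Williams for $g\geqslant 2$) for the diffeomorphism count. The only point worth tightening is that for $g=1$ the mapping class group acts a priori affinely on the torsor of $r$-spin structures, so one should fix the distinguished (Lie-group) $r$-spin structure as an $SL_2(\Z)$-fixed basepoint before identifying orbits with $\gcd$-classes in $(\Z/r)^2$.
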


\begin{remark}
	\label{rem:Arf-inv}
  In case $r$ is even and $g\geqslant2$, the two diffeomorphism classes of $r$-spin surfaces with underlying surface $\Sigma_g$ are distinguished by the Arf invariant, cf.\ Section~\ref{sec:Ex_A} below. 
\end{remark}

\subsection{TQFTs and closed \texorpdfstring{$\Lambda_r$}{Lambda\_r}-Frobenius algebras}
\label{subsec:TQFTsClosedLambdaRFrobeniusAlgebras}

Let $\mathcal C \equiv (\mathcal C, \otimes, \one)$ be a symmetric monoidal category, such as the category $\Vect_\k$ of $\k$-vector spaces with monoidal product $\otimes_\k$. 
Another example is $\mathcal C = \SVect_\k$, the category of $\Z/2$-graded vector spaces and even linear maps together with $\otimes_\k$, where the symmetric structure is given by $v\ot w\mapsto (-1)^{|v||w|}w\ot v$. 
By a TQFT we mean a representation of a bordism category on $\mathcal C$. 
More precisely: 

\begin{definition}
	\label{def:rSpinTQFT}
	A \textsl{(closed) $r$-spin TQFT valued in $\mathcal C$} is a symmetric monoidal functor
	$
	\Zc\colon\Bord_2^r \lra \mathcal C
	$. 
\end{definition}

Since we are not concerned with other types of TQFTs (such as ``open'', ``defect'', or ``extended'') in the present paper, we drop the attribute ``closed''. 
We also reiterate that an $r$-spin TQFT may at best distinguish diffeomorphism classes of $r$-spin structures, but it cannot resolve all isomorphism classes (recall the discussion after~\eqref{eq:mor-r-spin-surf}). 

By presenting a bordism category in terms of generators and relations one can equivalently describe TQFTs in terms of algebraic structure in the target category $\mathcal C$. 
In the familiar case of oriented TQFTs, corresponding to $r=1$, this leads to commutative Frobenius algebras. 
For general values of $r$, $r$-spin TQFTs give rise to the following algebraic structure: 

\begin{definition}
	\label{def:ClosedRFrobeniusAlgebra}
	A \textsl{closed $\Lambda_r$-Frobenius algebra}~$C$ 
	in $\mathcal C$ consists of a collection of objects $C_a\in\mathcal C$ for $a\in\Z/r$ as well as morphisms 
	\begin{align}
	\mu_{a,b}=
	\begin{tikzpicture}[very thick,scale=0.53,color=blue!50!black, baseline=0.65cm]
	\draw[-dot-] (2.5,0.75) .. controls +(0,1) and +(0,1) .. (3.5,0.75);
	\draw (3,1.5) -- (3,2.25); 
	\fill[color=black!80] (3,1.5) circle (0pt) node[below] (0up) {{\tiny $a,b$}};
	\end{tikzpicture} 
	&\colon C_a\otimes C_b\lra C_{a+b-1}\,,
	&\eta_1= 
	\begin{tikzpicture}[very thick,scale=0.4,color=blue!50!black, baseline=0]
	\draw (-0.5,-0.5) node[Odot] (unit) {}; 
	\draw (unit) -- (-0.5,0.5);
	\end{tikzpicture} 
	\label{eq:cLrFa-mor1}
	&\colon\one\lra C_1\,,\\
	\Delta_{a,b}= 
	\begin{tikzpicture}[very thick,scale=0.53,color=blue!50!black, rotate=180, baseline=-0.9cm]
	\draw[-dot-] (2.5,0.75) .. controls +(0,1) and +(0,1) .. (3.5,0.75);
	\draw (3,1.5) -- (3,2.25); 
	\fill[color=black!80] (3,1.5) circle (0pt) node[above] (0up) {{\tiny $a,b$}};
	\end{tikzpicture} 
	&\colon C_{a+b+1}\lra C_{a}\otimes C_b\,,
	&\varepsilon_{-1}= 
	\begin{tikzpicture}[very thick,scale=0.4,color=blue!50!black, baseline=0cm, rotate=180]
	\draw (-0.5,-0.5) node[Odot] (unit) {}; 
	\draw (unit) -- (-0.5,0.5);
	\end{tikzpicture} 
	&\colon C_{-1}\lra \one
	\label{eq:cLrFa-mor}
	\end{align}
	for $a,b\in\Z/r$.
	The \textsl{Nakayama automorphisms} of~$C$ are 
	\begin{equation}
	N_a:=
		\begin{tikzpicture}[very thick,scale=0.53,color=blue!50!black, baseline=-0.4cm]
		\draw[-dot-] (0,0) .. controls +(0,1) and +(0,1) .. (1,0);
		\draw (0.5,1.2) node[Odot] (unit) {}; 
		\draw (unit) -- (0.5,0.7);
		\draw (0,0) .. controls +(0,-0.5) and +(0,0.5) .. (-1,-1);
		\draw[-dot-] (0,-1) .. controls +(0,-1) and +(0,-1) .. (1,-1);
		\draw (0.5,-2.2) node[Odot] (unit2) {}; 
		\draw (unit2) -- (0.5,-1.7);
		\draw (0,-1) .. controls +(0,0.5) and +(0,-0.5) .. (-1,0);
		\draw (1,0) -- (1,-1);
		\draw (-1,0) -- (-1,2);
		\draw (-1,-1) -- (-1,-3);
		\fill[color=black!80] (0.55,-1.75) circle (0pt) node[right] (0up) {{\tiny $a,- a$}};
		\fill[color=black!80] (0.55,0.75) circle (0pt) node[right] (0up) {{\tiny $a,- a$}};
		\end{tikzpicture} 
	\colon C_a\lra C_a \, . 
	\label{eq:Nakayama-Ca}
	\end{equation}
	These data must satisfy the following conditions:
	\begin{align}
	\textrm{(co)associativity:} &&  
	\begin{tikzpicture}[very thick,scale=0.53,color=blue!50!black, baseline=0.4cm]
	\draw[-dot-] (3,0) .. controls +(0,1) and +(0,1) .. (2,0);
	\draw[-dot-] (2.5,0.75) .. controls +(0,1) and +(0,1) .. (3.5,0.75);
	\draw (3.5,0.75) -- (3.5,0); 
	\draw (3,1.5) -- (3,2.25); 
	\fill[color=black!80] (2.5,0.75) circle (0pt) node[below] (0up) {{\tiny $a,b$}};
	\fill[color=black!80] (3,1.5) circle (0pt) node[left] (0up) {{\tiny $a\!+\!b\!-\!1,c$}};
	\end{tikzpicture} 
	=
	\begin{tikzpicture}[very thick,scale=0.53,color=blue!50!black, baseline=0.4cm]
	\draw[-dot-] (3,0) .. controls +(0,1) and +(0,1) .. (2,0);
	\draw[-dot-] (2.5,0.75) .. controls +(0,1) and +(0,1) .. (1.5,0.75);
	\draw (1.5,0.75) -- (1.5,0); 
	\draw (2,1.5) -- (2,2.25); 
	\fill[color=black!80] (2.5,0.75) circle (0pt) node[below] (0up) {{\tiny $b,c$}};
	\fill[color=black!80] (2,1.5) circle (0pt) node[right] (0up) {{\tiny $a,b\!+\!c\!-\!1$}};
	\end{tikzpicture} 
	\, , 
	\quad
	\begin{tikzpicture}[very thick,scale=0.53,color=blue!50!black, baseline=-0.8cm, rotate=180]
	\draw[-dot-] (3,0) .. controls +(0,1) and +(0,1) .. (2,0);
	\draw[-dot-] (2.5,0.75) .. controls +(0,1) and +(0,1) .. (1.5,0.75);
	\draw (1.5,0.75) -- (1.5,0); 
	\draw (2,1.5) -- (2,2.25); 
	\fill[color=black!80] (2.5,0.75) circle (0pt) node[above] (0up) {{\tiny $a,b$}};
	\fill[color=black!80] (2,1.5) circle (0pt) node[left] (0up) {{\tiny $a\!+\!b\!+\!1,c$}};
	\end{tikzpicture} 
	=
	\begin{tikzpicture}[very thick,scale=0.53,color=blue!50!black, baseline=-0.8cm, rotate=180]
	\draw[-dot-] (3,0) .. controls +(0,1) and +(0,1) .. (2,0);
	\draw[-dot-] (2.5,0.75) .. controls +(0,1) and +(0,1) .. (3.5,0.75);
	\draw (3.5,0.75) -- (3.5,0); 
	\draw (3,1.5) -- (3,2.25); 
	\fill[color=black!80] (2.5,0.75) circle (0pt) node[above] (0up) {{\tiny $b,c$}};
	\fill[color=black!80] (3,1.5) circle (0pt) node[right] (0up) {{\tiny $a,b\!+\!c\!+\!1$}};
	\end{tikzpicture} 
	&
	\label{eq:clLrFa-co-associativity}
	\\
	\textrm{(co)unitality:} &&  
	\begin{tikzpicture}[very thick,scale=0.4,color=blue!50!black, baseline]
	\draw (-0.5,-0.5) node[Odot] (unit) {}; 
	\fill (0,0.6) circle (5.0pt) node (meet) {};
	\fill[color=black!80] (meet) circle (0pt) node[left] (0up) {{\tiny $1,a$}};
	\draw (unit) .. controls +(0,0.5) and +(-0.5,-0.5) .. (0,0.6);
	\draw (0,-1.5) -- (0,1.5); 
	\end{tikzpicture} 
	=
	\begin{tikzpicture}[very thick,scale=0.4,color=blue!50!black, baseline]
	\draw (0,-1.5) -- (0,1.5); 
	\end{tikzpicture} 
	=
	\begin{tikzpicture}[very thick,scale=0.4,color=blue!50!black, baseline]
	\draw (0.5,-0.5) node[Odot] (unit) {}; 
	\fill (0,0.6) circle (5.0pt) node (meet) {};
	\draw (unit) .. controls +(0,0.5) and +(0.5,-0.5) .. (0,0.6);
	\draw (0,-1.5) -- (0,1.5); 
	\fill[color=black!80] (meet) circle (0pt) node[right] (0up) {{\tiny $a,1$}};
	\end{tikzpicture} 
	\,,
	\quad
	\quad
	\begin{tikzpicture}[very thick,scale=0.4,color=blue!50!black, baseline=0, rotate=180]
	\draw (0.5,-0.5) node[Odot] (unit) {}; 
	\fill (0,0.6) circle (5.0pt) node (meet) {};
	\draw (unit) .. controls +(0,0.5) and +(0.5,-0.5) .. (0,0.6);
	\draw (0,-1.5) -- (0,1.5); 
	\fill[color=black!80] (meet) circle (0pt) node[left] (0up) {{\tiny $-1,a$}};
	\end{tikzpicture} 
	=
	\begin{tikzpicture}[very thick,scale=0.4,color=blue!50!black, baseline=0, rotate=180]
	\draw (0,-1.5) -- (0,1.5); 
	\end{tikzpicture} 
	=
	\begin{tikzpicture}[very thick,scale=0.4,color=blue!50!black, baseline=0cm, rotate=180]
	\draw (-0.5,-0.5) node[Odot] (unit) {}; 
	\fill (0,0.6) circle (5.0pt) node (meet) {};
	\draw (unit) .. controls +(0,0.5) and +(-0.5,-0.5) .. (0,0.6);
	\draw (0,-1.5) -- (0,1.5); 
	\fill[color=black!80] (meet) circle (0pt) node[right] (0up) {{\tiny $a,-1$}};
	\end{tikzpicture} 
	&
	\label{eq:clLrFa-co-unitality}
	\\
	\textrm{Frobenius relation:} &&  
	\begin{tikzpicture}[very thick,scale=0.4,color=blue!50!black, baseline=0cm]
	\draw[-dot-] (0,0) .. controls +(0,-1) and +(0,-1) .. (-1,0);
	\draw[-dot-] (1,0) .. controls +(0,1) and +(0,1) .. (0,0);
	\draw (-1,0) -- (-1,1.5); 
	\draw (1,0) -- (1,-1.5); 
	\draw (0.5,0.8) -- (0.5,1.5); 
	\draw (-0.5,-0.8) -- (-0.5,-1.5); 
	\fill[color=black!80] (-0.5,-0.8) circle (0pt) node[left] (0up) {{\tiny $c,a\!-\!c\!-\!1$}};
	\fill[color=black!80] (0.5,0.8) circle (0pt) node[right] (0up) {{\tiny $d\!-\!b\!+\!1,b$}};
	\end{tikzpicture}
	=
	\begin{tikzpicture}[very thick,scale=0.4,color=blue!50!black, baseline=0cm]
	\draw[-dot-] (0,0) .. controls +(0,1) and +(0,1) .. (-1,0);
	\draw[-dot-] (1,0) .. controls +(0,-1) and +(0,-1) .. (0,0);
	\draw (-1,0) -- (-1,-1.5); 
	\draw (1,0) -- (1,1.5); 
	\draw (0.5,-0.8) -- (0.5,-1.5); 
	\draw (-0.5,0.8) -- (-0.5,1.5); 
	\fill[color=black!80] (0.5,-0.8) circle (0pt) node[right] (0up) {{\tiny $b\!-\!d\!-\!1,d$}};
	\fill[color=black!80] (-0.5,0.8) circle (0pt) node[left] (0up) {{\tiny $a,c\!-\!a\!+\!1$}};
	\end{tikzpicture}
	&
	\label{eq:clLrFa-Frobenius}
	\\
	\textrm{commutativity:} &&  
		\begin{tikzpicture}[very thick,scale=0.53,color=blue!50!black, baseline=0cm]
		\draw[-dot-] (-0.5,0) .. controls +(0,1) and +(0,1) .. (0.5,0);
		\draw (0,0.7) -- (0,1.5);
		\draw (-0.5,0) -- (-0.5,-0.75);
		\draw (0.5,0) -- (0.5,-0.75);
		\fill (-0.5,-0.25) circle (3.5pt) node[left] (mult1) {{\tiny $N_b^{1-a}$}};
		\fill[color=black!80] (0,0.7) circle (0pt) node[below] (0up) {{\tiny $b,\!a$}};
		\end{tikzpicture} 
	= 
		\begin{tikzpicture}[very thick,scale=0.53,color=blue!50!black, baseline=0cm]
		\draw[-dot-] (-0.5,0) .. controls +(0,1) and +(0,1) .. (0.5,0);
		\draw (0,0.7) -- (0,1.5);
		\draw (-0.5,0) .. controls +(0,-0.25) and +(0,0.25) .. (0.5,-0.75);
		\draw (0.5,0) .. controls +(0,-0.25) and +(0,0.25) .. (-0.5,-0.75);
		%
		\fill[color=black!80] (0,0.7) circle (0pt) node[below] (0up) {{\tiny $a,\!b$}};
		\end{tikzpicture} 
	= 
		\begin{tikzpicture}[very thick,scale=0.53,color=blue!50!black, baseline=0cm]
		\draw[-dot-] (-0.5,0) .. controls +(0,1) and +(0,1) .. (0.5,0);
		\draw (0,0.7) -- (0,1.5);
		\draw (-0.5,0) -- (-0.5,-0.75);
		\draw (0.5,0) -- (0.5,-0.75);
		\fill (0.5,-0.25) circle (3.5pt) node[right] (mult1) {{\tiny $N_a^{b-1}$}};
		\fill[color=black!80] (0,0.7) circle (0pt) node[below] (0up) {{\tiny $b,\!a$}};
		\end{tikzpicture} 
	&
	\label{eq:clLrFa-commutativity}
	\\
	\textrm{twist relations:} &&  
	N_a^a = 1_{C_a}\,,
	\quad
	\quad
		\begin{tikzpicture}[very thick,scale=0.53,color=blue!50!black, baseline=0cm]
		\draw[-dot-] (-0.5,0) .. controls +(0,1) and +(0,1) .. (0.5,0);
		\draw (0,0.7) -- (0,1.5);
		\draw[-dot-] (-0.5,0) .. controls +(0,-1) and +(0,-1) .. (0.5,0);
		\draw (0,-1.2) node[Odot] (unit) {}; 
		\draw (unit) -- (0,-0.8);
		\fill (-0.5,0) circle (3.5pt) node[left] (mult1) {{\tiny $N_a^{b}$}};
		\fill[color=black!80] (0,0.8) circle (0pt) node[left] (0up) {{\tiny $a,\!-a$}};
		\fill[color=black!80] (0,-0.8) circle (0pt) node[left] (0up) {{\tiny $a,\!-a$}};
		\end{tikzpicture} 
	=
		\begin{tikzpicture}[very thick,scale=0.53,color=blue!50!black, baseline=0cm]
		\draw[-dot-] (-0.5,0) .. controls +(0,1) and +(0,1) .. (0.5,0);
		\draw (0,0.7) -- (0,1.5);
		\draw[-dot-] (-0.5,0) .. controls +(0,-1) and +(0,-1) .. (0.5,0);
		\draw (0,-1.2) node[Odot] (unit) {}; 
		\draw (unit) -- (0,-0.8);
		\fill (-0.5,0) circle (3.5pt) node[left] (mult1) {{\tiny $N_{a+b-1}^{b}$}};
		\fill[color=black!80] (0,0.8) circle (0pt) node[left] (0up) {{\tiny $a+b-1,\!-a-b+1$}};
		\fill[color=black!80] (0,-0.8) circle (0pt) node[left] (0up) {{\tiny $a+b-1,\!-a-b+1$}};
		\end{tikzpicture} 
	&
	\label{eq:clLrFa-twist}
	\end{align}
	and the deck transformation relations $N_a^r = 1_{C_a}$. 
\end{definition}

Note that a closed $\Lambda_r$-Frobenius algebra satisfies the following non-degeneracy condition:
For every $a\in\Z/r$ there are morphisms $\delta_a:=\Delta_{a,-a}\circ\eta_{1}\colon \one\lra C_{a}\otimes C_{-a}$ such that
\begin{align}
  (1_{C_a}\otimes \varepsilon_{-1}\circ\mu_{-a,a})\circ (\delta_{a}\otimes 1_{C_a})
  =1_{C_a}=
  (\varepsilon_{-1}\circ\mu_{a,-a}\otimes 1_{C_a})\circ (1_{C_a}\otimes \delta_{-a})\,.
  \label{eq:clLrFa-nondeg}
\end{align}
This condition determines $\delta_a$ uniquely and provides an alternative definition of closed $\Lambda_r$-Frobenius algebras.

Assume that finite direct sums exist in~$\mathcal C$, and consider an object $D=\bigoplus_{a\in\Z/r}D_a$ together with morphisms $\mu_{a,b}$, $\eta_{1}$ and $\varepsilon_{-1}$ as in \eqref{eq:cLrFa-mor1} and \eqref{eq:cLrFa-mor} satisfying associativity \eqref{eq:clLrFa-co-associativity}, unitality \eqref{eq:clLrFa-co-unitality}, and the non-degeneracy condition \eqref{eq:clLrFa-nondeg} for some morphisms $\delta_{a}$.
Furthermore assume that the above morphisms satisfy \eqref{eq:clLrFa-commutativity}, \eqref{eq:clLrFa-twist} and the deck transformation relations with $\Delta_{a,-a}\circ\eta_1$ replaced by~$\delta_a$.
The proof of the following lemma is completely analogous to the case of ordinary Frobenius algebras, see e.g.\ \cite{Kockbook}.

\begin{lemma}
  The object~$D$ as above together with the morphisms $\mu_{a,b}$, $\eta_1$, $\varepsilon_{-1}$ and
  \begin{align}
    \Delta_{a,b}:=(1_{D_a}\otimes \mu_{-a,a+b+1})\circ(\delta_a \otimes 1_{D_a})
    \label{eq:lem:alt-def-clLrFa}
  \end{align}
  form a closed $\Lambda_r$-Frobenius algebra.
  \label{lem:alt-def-clLrFa}
\end{lemma}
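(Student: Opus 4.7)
The plan is to proceed in the spirit of the classical proof that an associative algebra equipped with a non-degenerate associative pairing uniquely determines a compatible Frobenius coalgebra structure, working component by component and tracking the $\Z/r$-grading throughout. I would organise the argument into three steps.

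First, I would check that the definition \eqref{eq:lem:alt-def-clLrFa} is consistent with the assumption by which commutativity~\eqref{eq:clLrFa-commutativity}, twist~\eqref{eq:clLrFa-twist} and deck relations are stated with $\delta_a$ in place of $\Delta_{a,-a}\circ \eta_1$. Indeed, unitality of $\mu_{-a,1}$ gives
\begin{equation*}
\Delta_{a,-a}\circ \eta_1 \;=\; (1_{D_a}\otimes \mu_{-a,1})\circ (\delta_a\otimes \eta_1) \;=\; \delta_a\,,
\end{equation*}
so the commutativity, twist and deck relations hold for $\Delta$ automatically, as does the defining equation of the Nakayama automorphisms~\eqref{eq:Nakayama-Ca}.

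Second, I would verify counitality~\eqref{eq:clLrFa-co-unitality} for $(\Delta,\varepsilon_{-1})$. Substituting the definition of $\Delta$ gives
\begin{equation*}
(1_{D_a}\otimes \varepsilon_{-1})\circ \Delta_{a,-1} \;=\; (1_{D_a}\otimes \varepsilon_{-1}\circ\mu_{-a,a})\circ(\delta_a\otimes 1_{D_a}) \;=\; 1_{D_a}\,,
\end{equation*}
directly from~\eqref{eq:clLrFa-nondeg}; the symmetric identity on the other strand follows analogously after using commutativity to pass from $\delta_{-1}$ to $\delta_1$.

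Third, I would derive the Frobenius relation~\eqref{eq:clLrFa-Frobenius} by expanding the definition of $\Delta$ on each side: both expressions reduce to a single composition of one instance of $\delta_a$ together with two multiplications, and these agree by associativity of $\mu$. Coassociativity~\eqref{eq:clLrFa-co-associativity} for $\Delta$ then follows by the standard ``pair-of-pants'' argument: use the Frobenius relation to rewrite $(1\otimes \Delta)\circ\Delta$ in terms of $\Delta\circ\mu$, collapse an intermediate strand via the counitality just established, and reduce the claim to associativity for $\mu$.

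The main technical obstacle is the $\Z/r$-graded index bookkeeping. Each instance of $\mu_{a,b}$ shifts the output degree by $-1$ and each instance of $\Delta_{a,b}$ shifts it by $+1$, so every diagrammatic manipulation must be paired with a verification that the indices on both sides of the claimed equation track the same graded components. Once this accounting is carried out, each required identity reduces to the corresponding identity for ordinary commutative Frobenius algebras documented in the standard references.
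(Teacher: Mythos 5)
Your proof is correct and takes essentially the same approach as the paper, which in fact omits the argument entirely and only remarks that it is ``completely analogous to the case of ordinary Frobenius algebras'' with a reference to Kock's book. Your three steps --- identifying $\Delta_{a,-a}\circ\eta_1$ with $\delta_a$ so that the twist, deck and commutativity axioms transfer, deducing counitality from the non-degeneracy condition \eqref{eq:clLrFa-nondeg}, and obtaining the Frobenius relation and coassociativity from associativity of $\mu$ --- are precisely that standard argument, carried out with the required $\Z/r$-graded bookkeeping.
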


In $\mathcal C = \Bord_2^r$, there is a particular closed $\Lambda_r$-Frobenius algebra, where $C_a = S_a^1$, $\mu_{a,b}$ and~$\Delta_{a,b}$ are given by (upside-down) pairs-of-pants with $r$-structure, $\eta_1$ and~$\varepsilon_{-1}$ are given by discs as in~\eqref{eq:disc-r-spin-bordism}, and~$N_a$ corresponds to a deck transformation on~$S_a^1$. 
This generalises the fact that circles, pairs-of-pants and discs form a commutative Frobenius algebra in the category of 2-dimensional oriented bordisms, to which the preceding statement reduces for $r=1$. 

More generally, closed $\Lambda_r$-Frobenius algebras in an arbitrary category~$\mathcal C$ form the objects of a category $\Lambda_r\!\operatorname{Frob}(\mathcal C)$, whose morphisms $\varphi\colon C\lra D$ by definition are collections of morphisms $\varphi_a\colon C_a\lra D_a$ in~$\mathcal C$ which preserve the structure morphisms.
Analogously to the case $r=1$, one finds that $\Lambda_r\!\operatorname{Frob}(\mathcal C)$ is a groupoid. 
Moreover, as shown in \cite[Cor.\,5.2.2]{Szegedy-Stern}, pairs-of-pants and discs in fact generate $\Bord_2^r$, hence closed $\Lambda_r$-Frobenius algebras ``classify'' $r$-spin TQFTs:\footnote{The groupoid of $r$-spin TQFTs valued in~$\mathcal C$ inherits a symmetric monoidal structure from~$\mathcal C$. Then the equivalence of \cite{Szegedy-Stern} induces a (compatible) symmetric monoidal structure on $\Lambda_r\!\operatorname{Frob}(\mathcal C)$ implied in Theorem~\ref{thm:closed-r-spin-TQFT-classification}. Below in Section~\ref{sec:Ex_op} we describe this tensor product of closed $\Lambda_r$-Frobenius algebras explicitly.}

\begin{theorem}
	\label{thm:closed-r-spin-TQFT-classification}
	There is an equivalence of groupoids between $r$-spin TQFTs valued in~$\mathcal C$ and $\Lambda_r\!\operatorname{Frob}(\mathcal C)$.
\end{theorem}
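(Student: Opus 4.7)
The plan is to construct a functor $F\colon \operatorname{TQFT}_{\mathcal C}^r \to \Lambda_r\!\operatorname{Frob}(\mathcal C)$ by evaluating a TQFT on the canonical generating bordisms, and then a quasi-inverse $G$ by using a generators-and-relations presentation of $\Bord_2^r$, with the equivalence following from Corollary 5.2.2 of \cite{Szegedy-Stern}.

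For the functor $F$: given $\mathcal Z\colon\Bord_2^r\to\mathcal C$, set $C_a:=\mathcal Z(S^1_a)$, and define $\mu_{a,b}$, $\Delta_{a,b}$, $\eta_1$, $\varepsilon_{-1}$ as the images under $\mathcal Z$ of the $r$-spin pairs-of-pants and discs described just below Lemma~\ref{lem:alt-def-clLrFa}. The Nakayama automorphisms $N_a$ correspond to the generator of the deck transformation group $\Z/r$ acting on $S^1_a$. One checks, bordism by bordism, that the relations \eqref{eq:clLrFa-co-associativity}--\eqref{eq:clLrFa-twist} and $N_a^r = 1$ are all satisfied in $\Bord_2^r$; since $\mathcal Z$ is symmetric monoidal, these relations descend to their images in $\mathcal C$, so $F(\mathcal Z)$ is indeed a closed $\Lambda_r$-Frobenius algebra. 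A monoidal natural isomorphism $\mathcal Z\Rightarrow\mathcal Z'$ is sent to its components on the generating circles, which is automatically a morphism in $\Lambda_r\!\operatorname{Frob}(\mathcal C)$ since naturality on the generating bordisms gives precisely the structure-preservation conditions.

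For the quasi-inverse $G$: given a closed $\Lambda_r$-Frobenius algebra $C$ in $\mathcal C$, one uses the presentation of $\Bord_2^r$ from \cite[Cor.\,5.2.2]{Szegedy-Stern}, which states that $\Bord_2^r$ is freely generated as a symmetric monoidal category by the $r$-spin circles, pairs-of-pants, discs and cylinders with deck-transformation $r$-spin structure, subject precisely to the relations encoded in Definition~\ref{def:ClosedRFrobeniusAlgebra}. This presentation immediately yields a unique symmetric monoidal functor $\mathcal Z_C\colon\Bord_2^r\to\mathcal C$ sending each generator to the corresponding structure morphism of $C$. A morphism $\varphi\colon C\to D$ in $\Lambda_r\!\operatorname{Frob}(\mathcal C)$ gives, by the same universal property, a monoidal natural transformation $\mathcal Z_C\Rightarrow\mathcal Z_D$; and $\varphi$ is automatically invertible because $\varphi_{-a}$ admits a two-sided inverse constructed from $\varphi_a$ together with the non-degeneracy pairings~\eqref{eq:clLrFa-nondeg} of $C$ and $D$, showing that $\Lambda_r\!\operatorname{Frob}(\mathcal C)$ is indeed a groupoid.

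The natural isomorphism $G\circ F \cong \operatorname{id}$ follows because both functors agree on generating circles and generating bordisms, hence on all of $\Bord_2^r$ by the presentation. The isomorphism $F\circ G\cong \operatorname{id}$ is immediate from the definitions. The main obstacle and the entire topological content lies in invoking the presentation of $\Bord_2^r$: establishing that the stated relations are complete requires a careful handle-decomposition analysis of $r$-spin bordisms and a computation of the relevant mapping-class-group actions on $r$-spin structures, which is precisely what is carried out in \cite{Szegedy-Stern}. Assuming that result, the remainder of the argument is formal and parallels the classical $r=1$ case of commutative Frobenius algebras.
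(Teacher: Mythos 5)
Your proposal is correct and follows essentially the same route as the paper, which likewise defines the correspondence by evaluating on circles, pairs-of-pants and discs, and defers the topological content (the generators-and-relations presentation of $\Bordr$) to \cite[Cor.\,5.2.2]{Szegedy-Stern}. The additional details you supply (the invertibility of morphisms in $\Lambda_r\!\operatorname{Frob}(\mathcal C)$ via the non-degeneracy pairing, and the check that naturality on generators gives structure preservation) are the standard arguments the paper leaves implicit by analogy with the $r=1$ case.
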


To an $r$-spin TQFT $\mathcal Z$, this equivalence assigns the closed $\Lambda_r$-Frobenius algebra $C^{\mathcal Z}$ with $C^{\mathcal Z}_a := \mathcal Z(S_a^1)$ and whose structure maps \eqref{eq:cLrFa-mor1}--\eqref{eq:cLrFa-mor} are given by the images of the generators of $\Bord_2^r$. 
These generators by construction form a closed $\Lambda_r$-Frobenius algebra in $\Bord_2^r$. 
Conversely, from a given object $C \in \Lambda_r\!\operatorname{Frob}(\mathcal C)$ one obtains a TQFT which assigns the data of $C$ to the generators of $\Bord_2^r$.\footnote{As explained in \cite{Szegedy:2018phd}, the case $r=0$ corresponds to \textsl{framed} TQFTs. \textsl{Unoriented} 2-dimensional TQFTs were similarly classified in \cite{TuraevTurnerUnoriented}, and \cite{Czenky} gives a detailed analysis of this case in analogy to the work \cite{KOK} on the oriented case.}
We discuss examples of closed $\Lambda_r$-Frobenius algebras in Section~\ref{sec:ExamplesClosedLambdaRFrobeniusAlgebras} below. 

\medskip 

There is a canonical construction of an $r$-spin TQFT from an $s$-spin TQFT if~$s$ divides~$r$. 
Indeed, in this case we have a surjective group homomorphism $p_{r,s}\colon \Spin_2^r \lra \Spin_2^s$ given by $x\lmt x^{r/s}$. 
Pushing forward gives us a functor
\begin{equation}
(p_{r,s})_* \colon \Bord_2^r \lra \Bord_2^s \quad (\textrm{for } s|r) \, , 
\end{equation}
hence pre-composing with $(p_{r,s})_*$ assigns an $r$-spin TQFT to any $s$-spin TQFT. 
On the algebraic side, this translates into a functor 
\begin{equation}
\label{eq:PullbackRS}
P_{r,s}^* \colon \Lambda_s\!\operatorname{Frob}(\mathcal C) \lra \Lambda_r\!\operatorname{Frob}(\mathcal C)  \quad (\textrm{for } s|r) 
\end{equation}
with 
\begin{equation}
\big( P_{r,s}^*(C) \big)_a = C_{a\!\!\mod\!\!s} \, , 
\end{equation}
and the structure maps of $P_{r,s}^*(C)$ are induced by those of $C\in\Lambda_s\!\operatorname{Frob}(\mathcal C)$ in a straightforward way.

\subsection{Invariants from closed \texorpdfstring{$\Lambda_r$}{Lambda\_r}-Frobenius algebras}
\label{subsec:InvariantsFromClosedLambdaRFrobeniusAlgebras}

Every $r$-spin TQFT $\mathcal Z$ valued in $\mathcal C$ in particular provides invariants $\mathcal Z(\Sigma)$ of $r$-spin surfaces $\Sigma$. 
To compute them, we present $\Sigma$ in terms of the generator bordisms (i.e.\ pairs-of-pants, discs, and mapping cylinders over deck transformations), and consider the appropriate composition of the data of the closed $\Lambda_r$-Frobenius algebra $C^{\mathcal Z}$ (i.e.\ $\mu^{\mathcal Z}_{a,b}, \Delta_{a,b}^{\mathcal Z}, \eta_1^{\mathcal Z},  \varepsilon_{-1}^{\mathcal Z}$, and $N_a^{\mathcal Z}$, respectively). 
In this section we fix a TQFT~$\mathcal Z$ as above and compute the invariants it provides for diffeomorphism classes of closed $r$-spin surfaces, i.e.\ for morphisms $\varnothing\lra\varnothing$ in $\Bord_2^r$. 

For example, a sphere $\Sigma = S^2$ admits $r$-spin structures only if $r\in\{1,2\}$, of which there is then only one up diffeomorphism according to Theorem \ref{thm:r-spin-surf}(\ref{item:DiffeoClasses}). 
Since $S^2$ is obtained by glueing two discs together, the associated invariant is 
\begin{equation}
\label{eq:SphereInvariant}
\mathcal Z\big( S^2 \big) 
= 
\varepsilon_{-1}^{\mathcal Z} \circ \eta_1^{\mathcal Z} 
= 
\begin{tikzpicture}[very thick,scale=0.4,color=blue!50!black, baseline=-0.1cm]
\draw (-0.5,-0.5) node[Odot] (unit) {}; 
\draw (-0.5,0.5) node[Odot] (counit) {}; 
\draw (unit) -- (counit);
\end{tikzpicture} 
\quad 
\textrm{for } r\in\{1,2\}
\, . 
\end{equation}

The most interesting case is when $\Sigma$ is a torus. 
As described in \cite[Sect.\,4.1]{Szegedy-Stern}, every $r$-spin torus can be obtained by glueing two bent cylinders over an $r$-spin circle $S_a^1$ after twisting with the $(1-b)$-th power of a deck transformation cylinder, for some $a,b\in\Z/r$. 
Such a torus gives rise to an endomorphism of $\varnothing \in \Bord_2^r$ which we denote $T(a,b)$. 
In line with the $g=1$ case of Theorem \ref{thm:r-spin-surf}(\ref{item:DiffeoClasses}) one finds 
\begin{equation}
T(a,b) = T\big(\gcd(a,b,r), 0 \big) \, . 
\end{equation}
Hence it is enough to compute invariants of $T(d) := T(d,0)$ for $d := \gcd(a,b,r)$. 
For a TQFT $\mathcal Z$ as above these invariants are given by 
\begin{equation}\label{eq:beta}
\beta^{\mathcal Z}_d
:= 
\mathcal Z \big(T(d)\big) 
= 
\varepsilon_{-1}^{\mathcal Z} \circ \mu_{-a,a}^{\mathcal Z} \circ \big( (N_{-a}^{\mathcal Z})^{1-b} \otimes 1_{C^{\mathcal Z}_a} \big) \circ \Delta_{-a,a}^{\mathcal Z} \circ \eta_1^{\mathcal Z}
= 
\begin{tikzpicture}[very thick,scale=0.53,color=blue!50!black, baseline=-0.1cm]
\draw[-dot-] (-0.5,0) .. controls +(0,1) and +(0,1) .. (0.5,0);
\draw (0,1.2) node[Odot] (unit2) {}; 
\draw (unit2) -- (0,0.7);
\draw[-dot-] (-0.5,0) .. controls +(0,-1) and +(0,-1) .. (0.5,0);
\draw (0,-1.2) node[Odot] (unit) {}; 
\draw (unit) -- (0,-0.8);
\fill (-0.5,0) circle (3.5pt) node[left] (mult1) {{\tiny $(N_{-a}^{\mathcal Z})^{1-b}$}};
\fill[color=black!80] (0,0.8) circle (0pt) node[left] (0up) {{\tiny $-a,\!a$}};
\fill[color=black!80] (0,-0.8) circle (0pt) node[left] (0up) {{\tiny $-a,\!a$}};
\end{tikzpicture} 
\end{equation}
which does not depend on the individual values of $a,b$. 
Indeed, as shown in \cite[Prop.\,4.1.4]{Szegedy-Stern}, this is equal to the quantum dimension of $C^{\mathcal Z}_d$ in $\mathcal C$: 
\begin{equation}
\label{eq:betaInvariants}
\beta^{\mathcal Z}_d 
= 
\dim \!\big( C^{\mathcal Z}_d \big) 
= 
\operatorname{ev}_{C^{\mathcal Z}_d} \circ \, b_{C^{\mathcal Z}_{-d}, C^{\mathcal Z}_d} \circ \operatorname{coev}_{C^{\mathcal Z}_d} 
\end{equation}
where $b$ denotes the braiding. 

\medskip 

According to Theorem~\ref{thm:r-spin-surf-1}, a closed surface of genus $g\geqslant 2$ admits an $r$-spin structure if and only if $2-2g \equiv 0 \mod r$. 
The number of diffeomorphism classes of $r$-spin structures then only depends on the parity of~$r$. 

We first discuss the case where~$r$ is odd. 
Then for fixed $g\in\Z_{\geqslant 2}$ a closed surface~$\Sigma_g$ of genus~$g$ admits precisely one diffeomorphism class of $r$-spin structures. 
To compute the invariant associated to~$\Sigma_g$ by~$\mathcal Z$, we first note that the ``handle map'' 
\begin{equation}
\label{eq:HandleMap}
h_{x,a,b}^{\mathcal Z} := \mu_{a,b}^{\mathcal Z} \circ \Delta_{a,b}^{\mathcal Z} \colon C_x^{\mathcal Z} \lra C_{x-2}^{\mathcal Z} 
	\quad 
	\textrm{for } x = a+b+1
\end{equation}
may depend on the individual values of $a,b$, and not only on $a+b+1\in\Z/r$. Choose numbers $a_i,b_i\in \Z/r$, $i=0,\ldots, r-1$ such that $a_i+b_i+1 = x-2i$.  
Since~$r$ is odd, the composition 
\begin{equation}
\label{eq:Hx}
H_x^{\mathcal Z} 
	:= 
	h^{\mathcal Z}_{x+2,a_{r-1},b_{r-1}} \circ h^{\mathcal Z}_{x+4,a_{r-2},b_{r-2}} \circ \dots \circ h^{\mathcal Z}_{x-2,a_1,b_1} \circ h^{\mathcal Z}_{x,a_0,b_0} \colon C_x^{\mathcal Z} \lra C_x^{\mathcal Z}
\end{equation}
has precisely~$r$ factors. 
The map $H_x^{\mathcal Z}$ is the image under~$\mathcal Z$ of a connected bordism $S_x^1 \lra S_x^1$ of genus~$r$, and as shown in \cite[Thm.\,2.2.2\,(3)]{Szegedy:2018phd} it only depends on $x\in\Z/r$, not on the individual numbers $a_i$ and $b_i$. 
If compositions of maps of type $h_{x,a,b}$ do not depend on $a,b$, we suppress these indices below. 
Hence by composing $n\in\Z_{\geqslant 1}$ copies of $H_x^{\mathcal Z}$ and then taking the trace in~$\mathcal C$ (where taking the trace corresponds to adding another handle by identifying the in- and outgoing boundaries~$S_x^1$), we find that 
\begin{equation}
\label{eq:alphaOdd}
\alpha_n^{\mathcal Z} 
	:= \mathcal Z (\Sigma_{nr+1}) 
	= \textrm{tr} \Big( \big[ h^{\mathcal Z}_{x+2} \circ h^{\mathcal Z}_{x+4} \circ \dots \circ h^{\mathcal Z}_{x-2} \circ h^{\mathcal Z}_{x} \big]^n \Big) 
\end{equation}
is the invariant associated by~$\mathcal Z$ to~$\Sigma_{nr+1}$. 
Note that $g:=nr+1$ indeed satisfies $2-2g \equiv 0 \mod r$. 

Next we assume that~$r$ is even and $g:=nr/2+1 \in \Z_{\geqslant 2}$ for some $n\in\Z_{\geqslant 1}$. 
Then according to Theorem~\ref{thm:r-spin-surf-1}, $\Sigma_g$ admits precisely two diffeomorphism classes of $r$-spin structures. 
These two distinct morphisms $\varnothing\lra\varnothing$ in $\Bord_2^r$ are mapped to 
\begin{align}
\label{eq:alphaEven}
(\alpha_n^+)^{\mathcal Z} 
:= \mathcal Z (\Sigma_{nr/2 + 1}^+)
	= \textrm{tr} \Big( \big[ &(h^+_{x+2})^{\mathcal Z} \circ (h^{+}_{x+4})^{\mathcal Z} \circ \dots \circ (h^{+}_{x-2})^{\mathcal Z} \circ (h^{+}_{x})^{\mathcal Z} \big]^n \Big) 
	\\ 
(\alpha_n^-)^{\mathcal Z}
	:= \mathcal Z (\Sigma_{nr/2 + 1}^-)
	= \textrm{tr} \Big( \big[ &(h^+_{x+2})^{\mathcal Z} \circ (h^{+}_{x+4})^{\mathcal Z} \circ \dots \circ (h^{+}_{x-2})^{\mathcal Z} \circ (h^{+}_{x})^{\mathcal Z} \big]^{n-1} \nonumber
	\\ 
       &
       \qquad
       \circ(h^+_{x+2})^{\mathcal Z} \circ (h^{+}_{x+4})^{\mathcal Z} \circ \dots \circ (h^{+}_{x-2})^{\mathcal Z} \circ (h^{-}_{x})^{\mathcal Z} \Big) 
       \label{eq:alphaEvenMinus}
\end{align}
where 
\begin{align}
    (h_x^+)^{\mathcal Z} &:= \mu_{0,x-1}^{\mathcal Z} \circ \Delta_{0,x-1}^{\mathcal Z} \,,     \\
   (h_x^-)^{\mathcal Z} &:= \mu_{0,x-1}^{\mathcal Z} \circ \big( N_0 \otimes 1_{C_{x-1}^{\mathcal Z}} \big) \circ \Delta_{0,x-1}^{\mathcal Z}\,,     
\end{align}
are both maps $C_x^{\mathcal Z} \lra C_{x-2}^{\mathcal Z}$. 
We can define the maps $(h_x^{\pm})^{\mathcal Z}$ using general $a,b$ and not necessarily $a=0$ and $b=x-1$, as long as $a$ is even (similarly to what we have done in the odd case). Again, $(\alpha_n^\pm)^{\mathcal Z}$ depends only on~$n$, and not on the ``intermediate'' variables. 
By evaluating both morphisms on the algebra $A$ from Section~\ref{sec:Ex_A} we see that $(\alpha_n^+)^{\mathcal Z}$ and $(\alpha_n^-)^{\mathcal Z}$ correspond indeed to the two $r$-spin surfaces $\Sigma_{nr/2+1}^{\pm}$.
By a similar argument and \cite[Thm.\,2.2.2]{Szegedy:2018phd} we have the following:

\begin{lemma}\label{lem:identities}
Let $r$ be even, and let $x\in \Z/r$. It holds that $h_{x-2}^{-}h_x^{+} = h_{x-2}^+h_x^-$ and
$h_{x-2}^{+}h_x^{+} = h_{x-2}^-h_x^-$ 
\end{lemma}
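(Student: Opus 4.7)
My plan is to prove the two claimed identities at the level of morphisms in $\Bord_2^r$ and then transport them to any closed $\Lambda_r$-Frobenius algebra via Theorem~\ref{thm:closed-r-spin-TQFT-classification}. For any $r$-spin TQFT~$\mathcal Z$, each of $(h_x^+)^{\mathcal Z}$ and $(h_x^-)^{\mathcal Z}$ is the image under $\mathcal Z$ of a connected genus-$1$ $r$-spin bordism $H_x^{\pm}\colon S_x^1\to S_{x-2}^1$. The $-$ version differs from the $+$ version by an insertion of $N_0$ along an ``even'' attaching circle of the handle which, since $r$ is even, is precisely the deck transformation that flips the Arf-type $\Z/2$ invariant distinguishing the two diffeomorphism classes of $r$-spin structures on a handle attached along such a curve.

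Both $H^{\epsilon_2}_{x-2}\circ H^{\epsilon_1}_x$ are then connected genus-$2$ $r$-spin bordisms $S^1_x\to S^1_{x-4}$ with identical boundary parametrisations (which depend only on $x$ and $x-4$). By \cite[Thm.\,2.2.2]{Szegedy:2018phd}, for~$r$ even the diffeomorphism class of such a bordism is completely determined by an Arf-type invariant in $\Z/2$. This invariant is additive under the gluing used to compose handles and evaluates to $\epsilon_1+\epsilon_2 \mod 2$ on $H^{\epsilon_2}_{x-2}\circ H^{\epsilon_1}_x$ (identifying $+$ with $0$ and $-$ with $1$). Consequently $H^-_{x-2}\circ H^+_x$ and $H^+_{x-2}\circ H^-_x$ both have Arf invariant~$1$ and hence coincide in $\Bord_2^r$, while $H^+_{x-2}\circ H^+_x$ and $H^-_{x-2}\circ H^-_x$ both have Arf invariant~$0$ and likewise coincide. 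Applying any TQFT~$\mathcal Z$ then yields the two claimed identities in $C^{\mathcal Z}$.

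The delicate step, which I expect to be the main obstacle, is verifying cleanly that the $N_0$ insertion really corresponds to a shift of the Arf-type invariant by~$1$ and that this shift is additive across the composition of two handles. This is exactly the ``similar argument'' alluded to just before the lemma, and it proceeds along the same lines as the verification that $(\alpha_n^\pm)^{\mathcal Z}$ depends only on~$n$ and not on the intermediate indices~$a,b$; the only new ingredient is the additivity of the parities contributed by the two individual handles, which is a standard feature of Arf invariants on surfaces with boundary.
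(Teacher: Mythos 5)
Your proposal is correct and takes essentially the same route as the paper: the paper offers no detailed proof beyond the sentence preceding the lemma, which invokes the diffeomorphism classification of connected $r$-spin bordisms from \cite[Thm.\,2.2.2]{Szegedy:2018phd} together with the observation (verified by evaluating on the Arf algebra~$A$) that the $N_0$-insertion toggles the Arf-type $\Z/2$ invariant distinguishing the two classes. Your fleshed-out version, identifying each composite genus-$2$ bordism by the additivity of this invariant under gluing of handles, is exactly the intended argument.
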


In summary, an $r$-spin TQFT~$\mathcal Z$ assigns the invariants~\eqref{eq:SphereInvariant}, \eqref{eq:betaInvariants}, \eqref{eq:alphaOdd} and~\eqref{eq:alphaEven} to closed $r$-spin surfaces, computed in terms of the associated closed $\Lambda_r$-Frobenius algebra~$C^{\mathcal Z}$. 
Conversely, by dropping the superindices ``$\mathcal Z$'' in these formulas, we obtained invariants from any $C\in \Lambda_r\!\operatorname{Frob}(\mathcal C)$. 

\medskip 

Recall that for $s|r$, we have the pullback functor $P_{r,s}^*$ in~\eqref{eq:PullbackRS}. 
It is straightforward to compute the invariants of a TQFT associated to an image under $P_{r,s}^*$: 

\begin{lemma}
  Let $s|r$, $C \in \Lambda_s\!\operatorname{Frob}(\mathcal C)$, and set $D:=P_{r,s}^*(C)$. 
  Then
  \begin{align}
      \beta_d^D&=
      \beta_{d\!\!\mod\!\!s}^C 
      &&
      \textrm{for } d|s,
      \\
      \alpha_n^{D}&=
      \alpha_{nr/s}^{C} 
      &&
      \textrm{if both $r$ and $s$ are odd},
      \\
      \big(\alpha_n^{\pm}\big)^D&=
      \alpha_{nr/(2s)}^{C} 
      &&
      \textrm{if $r$ is even, $s$ is odd},
      \\
      \big(\alpha_n^{\pm}\big)^D&=
      \big(\alpha_{nr/s}^{\pm}\big)^C 
      &&
      \textrm{if both $r$ and $s$ are even}.
    \label{eq:lem:invariants-pullback}
  \end{align}
  \label{lem:invariants-pullback}
\end{lemma}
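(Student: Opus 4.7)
The plan is to translate the four algebraic identities into statements about the push-forward of closed $r$-spin surfaces to $s$-spin surfaces, and then read the result off from Theorem~\ref{thm:classification}. By construction, $P_{r,s}^*$ corresponds under Theorem~\ref{thm:closed-r-spin-TQFT-classification} to pre-composition with $(p_{r,s})_*\colon\Bord_2^r\to\Bord_2^s$, so for every closed $r$-spin surface $\Sigma$,
\begin{equation}
\mathcal Z^D(\Sigma)=\mathcal Z^C\!\bigl((p_{r,s})_*(\Sigma)\bigr).
\end{equation}
Hence it is enough to identify the diffeomorphism class in $\Bord_2^s$ of the push-forward of each closed $r$-spin surface occurring in Section~\ref{subsec:InvariantsFromClosedLambdaRFrobeniusAlgebras}, and then evaluate $\mathcal Z^C$ on it.

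For the toroidal invariants $\beta_d^D$ with $d\mid s$ I would use the representative $T(d,0)\in\Bord_2^r$. Since $(p_{r,s})_*$ sends $S_a^1$ to $S_{a\bmod s}^1$ and is compatible with both the pants decomposition and the deck transformations generating the $r$-spin monodromy, its image is $T(d\bmod s,0)\in\Bord_2^s$, whose classifying invariant is $\gcd(d\bmod s,s)=d\bmod s$ (using $d\mid s$). Evaluation in $C$ then gives $\beta_d^D=\beta_{d\bmod s}^C$.

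For the $\alpha$-invariants the underlying topological surface $\Sigma_g$ is fixed under push-forward, so the task reduces to re-expressing the genus $g$ in the $s$-spin labelling and tracking the resulting diffeomorphism class. If both $r$ and $s$ are odd, Theorem~\ref{thm:classification}(\ref{item:DiffeoClasses}) produces a unique $s$-spin class and the arithmetic $nr+1=(nr/s)s+1$ yields $\alpha_n^D=\alpha_{nr/s}^C$. If $r$ is even and $s$ odd, then $r/s$ is even, so $nr/(2s)\in\Z$ and $nr/2+1=(nr/(2s))s+1$; since $s$ is odd there is again a unique $s$-spin class, so both $\pm$ labels in $\Bord_2^r$ collapse after push-forward and $(\alpha_n^\pm)^D=\alpha_{nr/(2s)}^C$.

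The main obstacle is the case in which both $r$ and $s$ are even, where I must show that $(p_{r,s})_*$ preserves the $\pm$ label. Since $2\mid s\mid r$, the group homomorphism $p_{r,2}$ factors as $p_{s,2}\circ p_{r,s}$, so the mod-$2$ reduction of an $r$-spin structure coincides with the mod-$2$ reduction of its push-forward to an $s$-spin structure. By Remark~\ref{rem:Arf-inv} the $\pm$ label is detected by the Arf invariant of this mod-$2$ reduction, hence it is preserved by $(p_{r,s})_*$. Consequently the push-forward of $\Sigma_{nr/2+1}^{\pm}$ is $\Sigma_{(nr/s)s/2+1}^{\pm}\in\Bord_2^s$, giving $(\alpha_n^{\pm})^D=(\alpha_{nr/s}^{\pm})^C$. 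Having identified all push-forwards, the four equalities of the lemma follow directly from the defining formulas \eqref{eq:betaInvariants}, \eqref{eq:alphaOdd}, \eqref{eq:alphaEven} and \eqref{eq:alphaEvenMinus} applied to $C$.
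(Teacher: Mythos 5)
Your argument is correct. The paper actually states this lemma without proof (it is introduced as ``straightforward to compute''), so there is nothing to compare against; your geometric route --- identifying $\mathcal Z^D(\Sigma)$ with $\mathcal Z^C\bigl((p_{r,s})_*(\Sigma)\bigr)$, rewriting the genus in the $s$-spin labelling, and using the factorisation $p_{r,2}=p_{s,2}\circ p_{r,s}$ together with Remark~\ref{rem:Arf-inv} to see that the $\pm$ label survives push-forward in the even-even case --- is a clean way to carry out that computation, and it correctly isolates the only non-trivial point (the parities of $r/s$ and the fate of the two diffeomorphism classes). The one cosmetic slip is the parenthetical $\gcd(d\bmod s,s)=d\bmod s$, which reads oddly when $d=s$; but since $0=s$ in $\Z/s$ and $\beta^C$ only depends on the class of the index, nothing is affected.
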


\section{Examples of closed \texorpdfstring{$\Lambda_r$}{Lambda\_r}-Frobenius algebras}
\label{sec:ExamplesClosedLambdaRFrobeniusAlgebras}

In this section we present several examples of closed $\Lambda_r$-Frobenius algebras in $\Vect_{\k}$ and $\SVect_{\k}$ (Sections~\ref{sec:Ex_A}--\ref{sec:Ex_C}), which we refer to as type $A,\,B$, and~$C$.
In Section~\ref{sec:Ex_op} we define the direct sum and tensor product of closed $\Lambda_r$-Frobenius algebras.  
This in turn is applied in Section~\ref{sec:Ex_dist} to algebras of type $A,\,B$, and~$C$ to construct an explicit class of examples~$D$ of closed $\Lambda_r$-Frobenius algebras whose invariants distinguish all diffeomorphism classes of $r$-spin surfaces. 
The algebras of type~$D$ are non-semisimple, and in Section~\ref{subsec:Semisimplicity} we show that, in fact, for~$r$ not a prime, every closed $\Lambda_r$-Frobenius algebra with distinct invariants is necessarily non-semisimple.

\subsection{The closed \texorpdfstring{$\Lambda_r$}{Lambda\_r}-Frobenius algebra $A$ related to the Arf invariant}
\label{sec:Ex_A}

Let us assume that $r$ is even, and that~$\k$ is a field of characteristic~0. 
There is a closed $\Lambda_r$-Frobenius algebra $A\in\Lambda_r\!\operatorname{Frob}(\SVect_{\k})$ with 
\begin{align}
  \begin{aligned}
    A_{x}&=\k[1-x]=\k v_x\,,\quad\text{(odd for $x$ even)}\\
    \eta_{+1}(1)&= v_1\,,\quad
    \mu_{x,y} (v_x\otimes v_y)= v_{x+y-1} \,,\\
\varepsilon_{-1}(v_{-1}) &= 2\,,\quad
\Delta_{x,y}(v_{x+y+1}) = \frac{1}{2} v_x\otimes v_y \quad \text{and}\quad
  N_x(v_x)= (-1)^{1-x} v_x\,,\\
  \end{aligned}
  \label{eq:Arf-Lambda_r-FrobAlg}
\end{align}
where $\{v_x\}$ is a basis of~$A_x$. 
The corresponding $r$-spin TQFT $\Zc_{A}\colon \Bordr\lra\SVect_{\k}$ evaluated on a closed $r$-spin surface $(\Sigma_g,P,q)$ of genus~$g$ is
\begin{align}
  \begin{aligned}
    \Zc_{A}(\Sigma_g,P,q)= 2^{1-g}\cdot(-1)^{\mathrm{Arf}(\Sigma_g,P,q)}\,,
  \end{aligned}
  \label{eq:Arf-TQFT}
\end{align}
where $\mathrm{Arf}(\Sigma_g,P,q)\in\Z/2$ is the Arf invariant (see \cite{Randal:2014rs,Geiges:2012rs}, or \cite[Sect.\,3.4.2]{Szegedy:2018phd} for a review).
In particular, the invariants (introduced in Section~\ref{subsec:InvariantsFromClosedLambdaRFrobeniusAlgebras}) for~$A$ are 
\begin{align}
  \begin{aligned}
    \beta^A_d=
    \begin{cases}
      +1 & \text{if $d$ is odd}\\
      -1 & \text{if $d$ is even}\\
    \end{cases}
    \quad\text{and}\quad
    \big(\alpha_n^{\pm}\big)^A=\pm 2^{-nr/2}\,.
  \end{aligned}
  \label{eq:A-invariants}
\end{align}

\subsection{The closed \texorpdfstring{$\Lambda_r$}{Lambda\_r}-Frobenius algebra $B$ for $r>1$ odd} 
\label{sec:Ex_B}

Let $\zeta$ be a primitive $r$-th root of unity.
We define $B= \bigoplus_{x\in \Z/r} B_x\in\Vect_\k$ with 
\begin{equation}
B_0 = \operatorname{span}\big(\{w_x\}_{x\in \Z/r}\cup \{v_0\}\big) \, ,	
\quad 
B_x = \operatorname{span}\{v_x\}_{0\neq x\in \Z/r} \,.
\end{equation}
Set $u:=v_1$ and $z:=v_{-1}$. 
We define the unit by $\eta_{+1}(1)=u$, and the multiplication $\mu$ by 
\begin{align}
  \begin{aligned}
    \mu(v_x\otimes v_{-x})&=z && \text{for $x\in\Z/r$}\,,\\ 
    \mu(w_x\otimes w_{-x})&=z \quad \text{and} \quad
    \mu(w_{-x}\otimes w_{x})=\zeta^x z &&
    \text{for $0\leq x < r/2$}\,,\\ 
    \mu(u\otimes b)&=\mu(b\otimes u)=b && \text{for $b\in B$}\,,\\ 
  \end{aligned}
  \label{eq:B-profuct}
\end{align}
and 0 for any other combination of basis elements.
Moreover, we define the counit $\varepsilon \colon B_{-1}\to \k$ by $\varepsilon_{-1}(z)=1$, and the comultiplication~$\Delta$ is defined by 
\begin{align}
  \begin{aligned}
    \Delta(u) &= \sum_{x\in \Z/r} v_x\ot v_{-x} + \sum_{0\leq x<r/2}( \zeta^x w_x\ot w_{-x} + w_{-x}\ot w_x) - w_0\ot w_0\,,\\
    \Delta(v_x) &= v_x\ot z + z\ot v_x \quad\text{for } x\neq 1\,, \quad\text{and}\quad
    \Delta(w_x) = w_x\ot z + z\ot w_x\,.
  \end{aligned}
  \label{eq:B-coproduct}
\end{align}

\begin{lemma}
	\label{lem:Bfrobenius} 
	$B$ is a closed $\Lambda_r$-Frobenius algebra in $\Vect_\k$.
\end{lemma}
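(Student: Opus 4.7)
The plan is to invoke Lemma~\ref{lem:alt-def-clLrFa}. This reduces the task to three blocks: (i) verify that $(\mu,\eta_{+1},\varepsilon_{-1})$ is associative and unital and admits non-degenerate copairings $\delta_a\colon\one\to B_a\otimes B_{-a}$; (ii) verify commutativity~\eqref{eq:clLrFa-commutativity}, the twist relations~\eqref{eq:clLrFa-twist}, and the deck condition $N_a^r=\id$ for the Nakayama automorphism~\eqref{eq:Nakayama-Ca}; and (iii) check that the comultiplication produced by~\eqref{eq:lem:alt-def-clLrFa} agrees with the~$\Delta$ written down in~\eqref{eq:B-coproduct}.

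For (i), associativity of~$\mu$ reduces to a short case analysis: if one of the three factors equals the unit $u=v_1$, the identity follows from unitality, and otherwise any product of two non-unit basis vectors lies in $\k z$, on which both $\mu(z,-)$ and $\mu(-,z)$ vanish by~\eqref{eq:B-profuct}. Unitality is immediate. Non-degeneracy of the pairing $\beta_a:=\varepsilon_{-1}\circ\mu_{a,-a}$ is checked degree-by-degree: for $a\neq 0$ it is the rank-one form $v_a\otimes v_{-a}\mapsto 1$, and for $a=0$ the space $B_0$ splits $\beta_0$-orthogonally into the lines $\k v_0$ and $\k w_0$ together with the hyperbolic planes $\k w_x\oplus\k w_{-x}$ for $0<x<r/2$, on each of which $\beta_0$ is manifestly non-degenerate. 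The oddness of $r$ enters here to guarantee that $x$ and $-x$ are distinct in $\Z/r$ for $0<x<r/2$.

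For (ii), the Nakayama automorphism~\eqref{eq:Nakayama-Ca} is read off the same decomposition: $N_a=\id$ for $a\neq 0$, while $N_0$ fixes $v_0$ and $w_0$ and acts on each plane $\k w_x\oplus\k w_{-x}$ diagonally with eigenvalues dictated by the asymmetry $\beta_0(w_{-x},w_x)=\zeta^x\beta_0(w_x,w_{-x})$. Hence $N_0^r=\id$ follows from $\zeta^r=1$, and $N_a^a=\id$ is trivial for every~$a$. With this Nakayama in place, the commutativity and twist relations, evaluated on the distinguished basis, reduce to identities already encoded in~\eqref{eq:B-profuct}.

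For (iii), the Frobenius compatibility built into Lemma~\ref{lem:alt-def-clLrFa} pins down the entire comultiplication from its value $\sum_a\delta_a$ on the unit~$u$ via the identity $\Delta(b)=(\id\otimes\mu)(\Delta(u)\otimes b)$. Thus it suffices to match the Casimir elements of the pairings $\beta_a$ with the degree-$a$ components of~\eqref{eq:B-coproduct}, after which a single multiplication recovers the values on each generator $v_x$ and $w_x$. The main obstacle I expect lies in this last comparison: ensuring that the $\zeta$-weighted coefficients of the Casimir of $\beta_0$ line up exactly with those written in~\eqref{eq:B-coproduct}. Once that bookkeeping is carried out, every remaining axiom becomes a one- or two-line check on the distinguished basis.
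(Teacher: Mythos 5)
Your proposal is correct and follows essentially the same route as the paper: reduce to Lemma~\ref{lem:alt-def-clLrFa}, check associativity by the same case analysis on basis elements, exhibit the same copairings $\delta_a$ (with the same orthogonal decomposition of $B_0$), read off the Nakayama automorphism $N_0(w_x)=\zeta^x w_x$, and verify commutativity and the twist relations on the basis. The only place you are lighter than the paper is the second twist relation, where the paper actually evaluates $\mu_{x,-x}(N_x^k\otimes 1)\delta_x(1)$ and uses $\sum_{x\in\Z/r}\zeta^{(1+k)x}\in\{0,r\}$; this is a finite computation that your setup already supports, not a gap in the argument.
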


\begin{proof}
We verify the conditions of Lemma~\ref{lem:alt-def-clLrFa}. 
Then it is straightforward to check that the comultiplication is given by~\eqref{eq:B-coproduct}.

Associativity can be checked on basis elements. Any product of three basis elements of the form $(ab)c$ or $a(bc)$ is zero unless one of $a,b$ or $c$ is equal to $u$. But if one of the basis elements is equal to $u$ then associativity holds because $u$ is a left and a right unit. 

The non-degeneracy condition can be checked straightforwardly for the morphisms (recall the discussion preceding Lemma~\ref{lem:alt-def-clLrFa})
\begin{align}
  \delta_a(1):=&
  v_a\otimes v_{-a} \quad \text{for $a\neq0$} \,,
  \\
  \delta_0(1):=&
  v_0\otimes v_{0}
  + w_0\ot w_0 +\sum_{0< x<r/2}( \zeta^x w_x\ot w_{-x} + w_{-x}\ot w_x) \,.
  \label{eq:proof-delta-a}
\end{align}
These expressions, in turn, enable us to calculate the Nakayama automorphism~\eqref{eq:Nakayama-Ca} of~$B$. 
For any $x\in \Z/r$ we get 
\begin{align}
N_x(v_x) &= (1\ot \epsilon_{-1}\circ\mu_{x,-x})(\sigma_{B_x,B_x}\otimes 1)(v_x \otimes \delta_{x}(1)) = v_x\epsilon(v_xv_{-x}) = v_x\,,\nonumber
\\
N_0(w_x) &= (1\ot \epsilon_{-1}\circ\mu_{0,0})(\sigma_{B_x,B_x}\otimes 1)(w_x\otimes\delta_{0}(1)) \nonumber 
\\
& = \sum_{0\leq y<r/2} \big(\zeta^yw_y \epsilon(w_xw_{-y}) + w_{-y}\epsilon(w_xw_y)\big)\,.
\end{align}
Hence for both $0\leq x<r/2$ and $r/2<x<r$ we find $N_0(w_x) =\zeta^xw_x$.  

The commutativity axiom can be verified directly, using the fact that most of the products of basis vectors are zero. 
The only non-trivial part of the calculation is that for $0\leq x<r/2$ we have 
$ 
w_{-x}w_x = \zeta^xz = N_0^{-1}(w_x)w_{-x} = w_xN_0(w_{-x})
$. 

For the twist axiom we first compute $f(x,k):=\mu_{x,-x}(N_x^k\ot 1)\delta_{x}(1)$. 
For $x\neq 0$ we get $f(x,k) = \mu_{x,-x}(N_x^k\ot 1)(v_x\ot v_{-x}) = z$, while for $x=0$ we get 
\begin{align}
f(0,k)
	& = 
	\mu_{0,0}(N_0^k) \Big(v_0\ot v_0 + \sum_{0\leq x<r/2} (\zeta^{x}w_x\ot w_{-x} + w_{-x}\ot w_x) - w_0\ot w_0\Big) \nonumber 
	\\
	& = 
	z + \sum_{0\leq x<r/2} \big( \zeta^{x+kx}z + \zeta^{-x-kx}z - z \big) \nonumber 
	\\ 
	& = 
	z \Big( \sum_{x\in \Z/r} \zeta^{(1+k)x} + 1 \Big) 
	= 
	\begin{cases} z & \text{ if } k\neq -1\\ (r+1)z & \text{ if } k=-1\end{cases} \, . 
\end{align}
We need to show that $f(x,k) = f(x-k-1,k)$ for every $x,k\in \Z/r$. 
If $k\neq -1$ then both $f(x,k) = z$ and $f(x-k-1,k) = z$. 
If $k=-1$ then $f(x-(-1)-1,-1) = f(x,-1)$, 
and we are done. 
\end{proof}

Finally we compute invariants (recall Section~\ref{subsec:InvariantsFromClosedLambdaRFrobeniusAlgebras}). 
\begin{lemma}\label{lem:roddinvariants}
 The invariants of the closed $\Lambda_r$-Frobenius algebra  $B$ are
 $$\beta_d^B  = \begin{cases} 1 & \textrm{if $d\neq r$,}\\ r+1 & \text{if $d=r$,}\end{cases}\quad\textrm{and}\quad \alpha_n^B = 0\,.$$
 \end{lemma}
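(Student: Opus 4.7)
My plan is to treat the two invariants separately, each via its explicit formula from Section~\ref{subsec:InvariantsFromClosedLambdaRFrobeniusAlgebras}. For the $\beta$-invariants, I would apply~\eqref{eq:betaInvariants}, which in $\Vect_\k$ identifies $\beta_d^B$ with the ordinary dimension of $B_{d\mod r}$. Reading off the defining bases of~$B$, one has $\dim B_0 = r+1$, accounting for $\beta_r^B = r+1$, while $\dim B_d = 1$ for every divisor $d<r$ of~$r$, giving $\beta_d^B = 1$ in the remaining cases.

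For the $\alpha$-invariants, I would use~\eqref{eq:alphaOdd} to write $\alpha_n^B = \operatorname{tr}\!\big((H_x^B)^n\big)$, where $H_x^B$ is the $r$-fold composition of handle maps $h_y = \mu_{a,b}\circ\Delta_{a,b}$; since the left-hand side is independent of $x$, it suffices to show $H_x^B = 0$ for one convenient~$x$. The heart of the argument is the claim that for every $y\in\Z/r\setminus\{0,1\}$, the handle map $h_y\colon B_y\to B_{y-2}$ annihilates the generator~$v_y$. To verify this I would substitute the comultiplication formula $\Delta(v_y) = v_y\otimes z + z\otimes v_y$ (and separately $\Delta(z) = 2\,z\otimes z$ when $y=-1$) and note that the only non-zero graded components of $\Delta_{a,b}(v_y)$ lie in $B_y\otimes B_{-1}$ or $B_{-1}\otimes B_y$; applying $\mu_{a,b}$ then yields products of the form $v_y\cdot v_{-1}$, $v_{-1}\cdot v_y$, or $v_{-1}\cdot v_{-1}$, none of which match the non-zero product rules in~\eqref{eq:B-profuct} (which always require either a factor $u=v_1$ or a pair $v_x,v_{-x}$ of opposite indices), so every such product vanishes.

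Choosing $x=2$ then completes the argument: since $r$ is odd and $r\geqslant 3$, the residue $2$ is nonzero and distinct from~$1$ in $\Z/r$, so $B_2$ is one-dimensional with basis~$v_2$, and in $H_2^B$ the rightmost (first-applied) handle map $h_2$ already satisfies $h_2(v_2)=0$. Hence $H_2^B$ is the zero endomorphism of the one-dimensional space $B_2$, and $\alpha_n^B = \operatorname{tr}(0)=0$. The main point requiring care is the component analysis of $\Delta$ together with handling the edge case $y=-1$, where $\Delta(z)=2\,z\otimes z$ sits entirely in $B_{-1}\otimes B_{-1}$; the resulting product $v_{-1}\cdot v_{-1}$ would only match the rule $v_x\cdot v_{-x}$ if $r=2$, which is excluded, so it too vanishes.
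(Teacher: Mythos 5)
Your proposal is correct and follows essentially the same route as the paper: $\beta_d^B=\dim B_d$ via~\eqref{eq:betaInvariants}, and $\alpha_n^B=0$ because the handle map $h_y$ kills $v_y$ for $y\neq 1$ (the products $v_y\cdot z$, $z\cdot v_y$, $z\cdot z$ all vanish), so $H_x=0$. Your version merely adds the explicit component analysis of $\Delta$ and the convenient choice $x=2$, which cleanly sidesteps the extra basis vectors $w_x$ in degree $0$.
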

 \begin{proof}
   From Equation \ref{eq:betaInvariants} we know that $\beta_d = \dim(B_d)$, and the first part of the lemma already follows from the definition of $B$. 
   For the second part, recall the operators~$h_x$ and$~H_x$ in\eqref{eq:HandleMap} and~\eqref{eq:Hx}, respectively, and the definition $\alpha_n = \Tr(H_x^n)$ from~\eqref{eq:alphaOdd}. 
   However, if $x\neq 1$ then $m_{a,b}\Delta_{a,b}(v_x)=0$, and in particular $h_x=0$ and $H_x=0$.
\end{proof}

\begin{notation}
	To stress the value of~$r$, we denote the algebra constructed above by $B^{(r)}$. 
\end{notation}

\subsection{The closed \texorpdfstring{$\Lambda_r$}{Lambda\_r}-Frobenius algebra $C$ for $r>2$ even} 
\label{sec:Ex_C}

Let $\zeta$ again be a primitive $r$-th root of unity.
We define $C= \bigoplus_{x\in \Z/r} C_x\in\Vect_\k$ with 
\begin{equation}
C_0 = \operatorname{span}\big( \{w_{i,j}\}_{i\in\{0,1\},j\in \Z/r}\cup\{v_0\} \big) \,, 
\quad 
C_x = \operatorname{span}\{v_x\}_{0\neq x\in \Z/r} \,.
\end{equation}
Write $u:=v_1$ and $z:=v_{-1}$. 
We define the unit by $\eta_{+1}(1)=u$, and the multiplication $\mu$ by 
\begin{align}
\begin{aligned}
\mu(v_x\otimes v_{-x})&=z && \text{for $x\in\Z/r$}\,,\\ 
\mu(w_{0,x}\otimes w_{1,-x})&=z \quad \text{and} \quad
\mu(w_{1,-x}\otimes w_{0,x})=\zeta^{-x} z &&
\text{for $x\in\Z/r$}\,,\\ 
\mu(u\otimes c)&=\mu(c\otimes u)=c && \text{for $c\in C$}\,,\\ 
\end{aligned}
\label{eq:C-product}
\end{align}
and 0 for any other combination of basis elements.
We define the counit by $\varepsilon_{-1}(z)=1$,
and the comultiplication~$\Delta$ acts non-trivially as follows: 
\begin{align}
\begin{aligned}
\Delta(u) &= \sum_{x\in \Z/r} \Big(v_x\ot v_{-x} + \zeta^x w_{0,x}\ot w_{1,-x} + w_{1,-x}\ot w_{0,x}\Big)\,,\\
\Delta(v_x) &= v_x\ot z + z\ot v_x \quad\text{for } x\neq 1\,, \\
\Delta(w_{0,x}) &= w_{0,x}\ot z + z\ot w_{0,x} \quad\text{and}\quad
\Delta(w_{1,x}) = w_{1,-x}\ot z + z\ot w_{1,-x}\,.
\end{aligned}
\label{eq:C-coproduct}
\end{align}

\begin{lemma} 
	\label{lem:Cfrobenius}
	$C$ is a closed $\Lambda_r$-Frobenius algebra in $\Vect_\k$.
\end{lemma}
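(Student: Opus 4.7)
The plan is to apply Lemma~\ref{lem:alt-def-clLrFa}, so it suffices to verify associativity and unitality of~$\mu$, the existence of morphisms $\delta_a \colon \one \to C_a \otimes C_{-a}$ satisfying the non-degeneracy condition~\eqref{eq:clLrFa-nondeg}, the commutativity relation~\eqref{eq:clLrFa-commutativity}, the twist relation~\eqref{eq:clLrFa-twist}, and the deck transformation identities $N_a^r = 1_{C_a}$. Once these hold, the stated comultiplication~\eqref{eq:C-coproduct} follows from~\eqref{eq:lem:alt-def-clLrFa} by direct substitution. The overall strategy mirrors that of Lemma~\ref{lem:Bfrobenius}, the only genuine novelty being the non-symmetric product on the now enlarged degree-zero piece.

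Associativity and unitality are immediate on basis elements: any triple product $(c_1 c_2) c_3$ or $c_1(c_2 c_3)$ vanishes by~\eqref{eq:C-product} unless one of the factors equals $u$, in which case the two-sided unit property handles the check. For non-degeneracy I take
$$\delta_a(1) = v_a \otimes v_{-a} \quad (a \neq 0)\,, \qquad \delta_0(1) = v_0 \otimes v_0 + \sum_{x \in \Z/r}\bigl( \zeta^x w_{0,x} \otimes w_{1,-x} + w_{1,-x} \otimes w_{0,x} \bigr)\,,$$
and verify the zigzag~\eqref{eq:clLrFa-nondeg} directly on each basis vector using~\eqref{eq:C-product} and $\varepsilon_{-1}(z)=1$.

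Using these $\delta_a$, the Nakayama automorphism is computed exactly as in the proof of Lemma~\ref{lem:Bfrobenius}: it is the identity on each $C_a$ with $a \neq 0$, while on $C_0$ one obtains
$$N_0(v_0) = v_0\,, \qquad N_0(w_{i,x}) = \zeta^{x}\, w_{i,x} \quad \bigl(i \in \{0,1\},\ x \in \Z/r\bigr)\,.$$
Since $\zeta^r = 1$ this gives $N_0^r = 1_{C_0}$, which is both the deck transformation relation and (together with $N_a = \id$ for $a \neq 0$) the first half $N_a^a = 1_{C_a}$ of the twist axiom. The commutativity axiom~\eqref{eq:clLrFa-commutativity} is nontrivial only on $C_0 \otimes C_0$, where the one substantive check $w_{1,-x} w_{0,x} = \zeta^{-x} z = N_0^{-1}(w_{0,x}) \cdot w_{1,-x}$ is immediate from $N_0^{-1}(w_{0,x}) = \zeta^{-x} w_{0,x}$ and $\mu(w_{0,x} \otimes w_{1,-x}) = z$.

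For the second half of the twist axiom I set $f(x,k) := \mu_{x,-x}(N_x^k \otimes 1) \delta_x(1)$ and show $f(x,k) = f(x-k-1,k)$. For $x \neq 0$ one gets $f(x,k) = z$ unconditionally. For $x = 0$ a direct computation yields
$$f(0,k) = \Bigl( 1 + \sum_{x \in \Z/r}\bigl( \zeta^{(k+1)x} + \zeta^{-(k+1)x}\bigr) \Bigr) z\,,$$
which equals $z$ when $k \not\equiv -1 \mod r$ and $(1 + 2r) z$ when $k \equiv -1$; in either regime the identity holds, trivially so when $k \equiv -1$ (since then $x-k-1 = x$), and automatically otherwise because the value of $f$ becomes independent of~$x$. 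The main obstacle is the consistent bookkeeping of the phases~$\zeta^x$ introduced by the non-symmetric product $w_{0,x} w_{1,-x}$ versus $w_{1,-x} w_{0,x}$; once one confirms that they assemble into $N_0(w_{i,x}) = \zeta^x w_{i,x}$, everything else reduces to the standard geometric sum $\sum_{x\in\Z/r}\zeta^{mx}$ equalling $r$ or $0$ according to whether $m\equiv 0 \mod r$, exactly as in the proof of Lemma~\ref{lem:Bfrobenius}.
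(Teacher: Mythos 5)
Your proposal is correct and follows essentially the same route as the paper's own proof: reduce to Lemma~\ref{lem:alt-def-clLrFa}, use the same morphisms $\delta_a$, compute the Nakayama automorphism to be the identity on $C_a$ for $a\neq 0$ and $N_0(w_{i,x})=\zeta^x w_{i,x}$ on $C_0$, and verify commutativity and the twist relation via the same geometric-sum evaluation of $f(0,k)$. No gaps.
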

\begin{proof} 
	Associativity and unitality can be checked analogously to the case of~$B$ in Section~\ref{sec:Ex_B}.
	The non-degeneracy condition can be shown for the morphisms
	\begin{align}
	\delta_a(1):=&
	v_a\otimes v_{-a} \quad \text{for $a\neq0$ and}\\
	\delta_0(1):=&
	v_0\otimes v_{0}
	+ \sum_{x\in\Z/r}\Big( \zeta^x w_{0,x}\ot w_{1,-x} + w_{1,-x}\ot w_{0,x}\Big) \,,
	\label{eq:C-proof-delta-a}
	\end{align}
	which can be checked to correspond to~$\Delta$ in~\eqref{eq:C-coproduct}. 
	
	We calculate the Nakayama automorphism of $C$ analogously to the case of~$B$. 
	For $x\in \Z/r$ we find 
	\begin{align}
	N_x(v_x) = v_x\,, \quad
	N_0(w_{0,x}) = \zeta^x w_{0,x}\,, \quad
	N_0(w_{1,x}) = \zeta^{x} w_{1,x}\,.
	\label{eq:C-Nakayama}
	\end{align}
	
	For the commutativity axiom, we only need to take care of the multiplications that are non-zero. We use the formulas for the Nakayama automorphism and get
	\begin{align}
	\begin{aligned}
	v_x v_{-x} &= N_x^{x-1}(v_{-x})v_x = v_{-x}N_{-x}^{1-x}(v_x) = z\,,\\
	w_{0,x}w_{1,-x} &= N_0^{-1}(w_{1,-x})w_{0,x} = w_{1,-x}N_0(w_{0,x}) = z  \,,\\
	w_{1,-x}w_{0,x} &= N_0^{-1}(w_{0,x})w_{1,-x} = w_{0,x}N_0(w_{1,-x}) = \zeta^{-x}z\,.
	\end{aligned}
	\end{align}
	
	For the twist axiom, we write as before $f(x,k):=m_{x,-x}(N_x^k\ot 1)\Delta_{x,-x}(u)$. 
	For $x\neq 0$ we have 
	$f(x,k) = \mu_{x,-x}(N_x^k\ot 1)(v_x\ot v_{-x}) = z$. 
	For $x=0$ we have
	\begin{align}
	\begin{aligned}
	f(0,k) &= \mu_{0,0}(N_0^k\otimes 1)\Big(v_0\ot v_0 + \sum_{x\in \Z/r} (w_{1,-x}\ot w_{0,x} + \zeta^x w_{0,x}\ot w_{1,-x}) \Big) \\
	&=z + \sum_{x\in \Z/r} \zeta^{-x-kx}z + \zeta^{x+kx}z = \Big(2\sum_{x\in \Z/r} \zeta^{(1+k)x} + 1\Big)z \\
	&= 
	\begin{cases} 
	z & \text{if  $k\neq -1$,}\\ 
	(2r+1)z & \text{if  $k=-1$.}
	\end{cases}
	\end{aligned}
	\end{align}
	We need to show that $f(x,k) = f(x-k-1,k)$ for every $x,k\in \Z/r$. If $k=-1$, then this equality holds trivially, and if $k\neq -1$ then $f(x,-1) = f(x-(-1)-1,-1)$. 
\end{proof}

The proof of Lemma~\ref{lem:roddinvariants} generalises directly to a proof of the following:
\begin{lemma} 
	\label{lem:C-invariants}
	The invariants of the closed $\Lambda_r$-Frobenius algebra $C$ are 
	\begin{align}
	\beta_d^C = 
	\begin{cases} 
	1 &\textrm{if $d\neq r$,} \\ 
	2r+1& \textrm{if $d=r$,}
	\end{cases} 
	\quad
	\text{and}
	\quad
	\big(\alpha_n^{\pm}\big)^C = 0\,.
	\label{eq:lem:C-invariants}
	\end{align}
\end{lemma}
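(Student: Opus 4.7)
The plan is to mirror the proof of Lemma~\ref{lem:roddinvariants}, with the only new work caused by the enlarged degree-zero piece $C_0$. For the $\beta$-invariants I would invoke \eqref{eq:betaInvariants} and use that the quantum dimension in $\Vect_{\k}$ coincides with the ordinary $\k$-dimension, so $\beta_d^{C} = \dim_{\k} C_d$ where $d$ is regarded as a residue in $\Z/r$. For a proper divisor $d$ of $r$ the representative is nonzero and $C_d=\operatorname{span}\{v_d\}$ is one-dimensional, while for $d=r$ the representative is $0$ and the given basis of $C_0$ has $2r+1$ elements, immediately giving the two cases in~\eqref{eq:lem:C-invariants}.

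For the $\alpha^{\pm}$-invariants the strategy is to show that each handle map $h_x^{+}$ and $h_x^{-}$ vanishes whenever $x\neq 1$, so that any cyclic composition of such maps necessarily contains a zero factor. When $x\notin\{0,1\}$, the space $C_x=\operatorname{span}\{v_x\}$ is one-dimensional, and the formula $\Delta(v_x)=v_x\otimes z+z\otimes v_x$ places the entire image in $(C_x\otimes C_{-1})\oplus(C_{-1}\otimes C_x)$, so the component $\Delta_{0,x-1}(v_x)\in C_0\otimes C_{x-1}$ is zero. When $x=0$ one has to inspect all $2r+1$ basis vectors of $C_0$; the comultiplication formulas~\eqref{eq:C-coproduct} show that $\Delta_{0,-1}$ sends each such vector to an element of the form $(\,\cdot\,)\otimes z = (\,\cdot\,)\otimes v_{-1}$, and a glance at~\eqref{eq:C-product} confirms that no product of this shape in the second factor is nonzero. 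Inserting $N_0\otimes 1$ only rescales the left tensor factor by a power of $\zeta$ via~\eqref{eq:C-Nakayama}, so the same vanishing holds for $h_0^{-}$.

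With $h_x^{\pm}=0$ for $x\neq 1$ in place, the cyclic product of $r/2$ handle maps appearing in~\eqref{eq:alphaEven} and~\eqref{eq:alphaEvenMinus} runs through the arithmetic progression $\{x,x-2,x-4,\ldots,x+2\}\subset \Z/r$ of subscripts. Since $r>2$, this progression contains at least two distinct residues and hence at least one residue different from $1$, so at least one factor in the composition is zero; therefore the composition and its trace vanish, yielding $(\alpha_n^{\pm})^{C}=0$. The minus-variant differs only by replacing a single $h^{+}_x$ by $h^{-}_x$, and the same subscript-counting argument applies. The only mildly delicate point, and the main piece of bookkeeping beyond the analogous proof for $B$, is the case $x=0$: one must verify on each of the basis vectors $v_0$, $w_{0,j}$, $w_{1,j}$ that neither the multiplication nor the Nakayama twist produces an unexpected $v_{-1}$-coefficient.
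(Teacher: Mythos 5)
Your proposal is correct and follows essentially the same route as the paper, which simply notes that the proof of Lemma~\ref{lem:roddinvariants} generalises directly: $\beta_d^C=\dim(C_d)$ via~\eqref{eq:betaInvariants}, and the handle maps $h_x^{\pm}$ vanish for $x\neq 1$, forcing every cyclic composition in~\eqref{eq:alphaEven}--\eqref{eq:alphaEvenMinus} to contain a zero factor since $r/2\geq 2$. Your explicit check of the basis vectors $v_0$, $w_{0,j}$, $w_{1,j}$ in the $x=0$ case is the one piece of bookkeeping the paper leaves implicit, and it is carried out correctly.
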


\begin{notation}
To stress the value of~$r$, we denote the algebra constructed above by $C^{(r)}$. 
\end{notation}

\subsection{A one-parameter family of 1-dimensional algebras}

Let $\kappa\in \k^{\times}$, and consider the 1-dimensional algebra~$\k$. 
We define a counit by $\epsilon(1) = \kappa^{-1}$. 
This gives us a commutative Frobenius algebra, where $\Delta(1) = \kappa 1\ot 1$. The handle endomorphism is then multiplication by $\kappa$. 
Pulling back with $P_{r,1}^*$ from~\eqref{eq:PullbackRS}, we get a $\Lambda_r$-Frobenius algebra which we denote by $E_{\kappa}$. 

\begin{lemma}
	\label{lem:E}
If $r$ is odd then the invariants of $E_{\kappa}$ are 
\begin{equation}
\beta_d^{E_{\kappa}} = 1
	\, , \quad 
	\alpha_n^{E_{\kappa}} = \kappa^{rn}
	\qquad 
	\textrm{for all $d|r$ and $n\in\Z_{\geqslant 1}$.} 
\end{equation}
If $r$ is even then the invariants of $E_{\kappa}$ are 
\begin{equation}
\beta_d^{E_{\kappa}}=1
	\, , \quad 
	(\alpha_n^{+})^{E_{\kappa}} =(\alpha_n^{-})^{E_{\kappa}} = 
	  \kappa^{rn/2}
	\qquad 
	\textrm{for all $d|r$ and $n\in\Z_{\geqslant 1}$.} 
\end{equation}
\end{lemma}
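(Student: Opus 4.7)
The plan is to apply Lemma~\ref{lem:invariants-pullback} with $s=1$: by construction $E_\kappa = P_{r,1}^*(K_\kappa)$, where $K_\kappa$ denotes the underlying oriented ($1$-spin) commutative Frobenius algebra on $\k$ with counit $\epsilon(1)=\kappa^{-1}$, unit $\eta(1)=1$, and multiplication $\mu(1\otimes 1)=1$.

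First I would handle the $\beta$-invariants, for which the pullback formula of Lemma~\ref{lem:invariants-pullback} only directly covers the case $d\mid s=1$. Instead I would invoke the quantum-dimension description~\eqref{eq:betaInvariants}: $\beta_d^{E_\kappa} = \dim\!\big((E_\kappa)_d\big)$ in $\Vect_{\k}$. Since every graded component $(E_\kappa)_a$ with $a\in\Z/r$ equals $\k$ by definition of the pullback, each such component is one-dimensional, and hence $\beta_d^{E_\kappa} = 1$ for every $d\mid r$.

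Next I would compute $\alpha_m^{K_\kappa}$ for the oriented algebra $K_\kappa$ itself. The Frobenius relation $(\epsilon\otimes\id)\circ\Delta = \id$ forces $\Delta(1) = \kappa\cdot 1\otimes 1$, so the handle map $h = \mu\circ\Delta\colon\k\to\k$ of~\eqref{eq:HandleMap} is multiplication by $\kappa$. With $r=1$, the composition $H_x$ from~\eqref{eq:Hx} has a single factor $H_x = h$, so~\eqref{eq:alphaOdd} yields $\alpha_m^{K_\kappa} = \Tr(h^m) = \kappa^m$. Finally I would invoke Lemma~\ref{lem:invariants-pullback}: for $r$ odd, $\alpha_n^{E_\kappa} = \alpha_{nr}^{K_\kappa} = \kappa^{rn}$; for $r$ even, $(\alpha_n^+)^{E_\kappa} = (\alpha_n^-)^{E_\kappa} = \alpha_{nr/2}^{K_\kappa} = \kappa^{rn/2}$.

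I do not anticipate any substantial obstacle: the argument is a routine consolidation of the pullback formulas of Lemma~\ref{lem:invariants-pullback} with the explicit one-dimensional handle computation. The automatic coincidence $(\alpha_n^+)^{E_\kappa} = (\alpha_n^-)^{E_\kappa}$ in the even case is a useful consistency check, reflecting the expected fact that $E_\kappa$, being pulled back from an oriented TQFT via $P_{r,1}^*$, cannot detect the Arf invariant distinguishing the two diffeomorphism classes of $r$-spin structures on $\Sigma_{nr/2+1}$.
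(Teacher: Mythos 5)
Your proposal is correct and follows essentially the same route as the paper: invoke Lemma~\ref{lem:invariants-pullback} to reduce to the underlying oriented Frobenius algebra, observe that the handle endomorphism is multiplication by $\kappa$, and take traces of its powers. Your extra care with the $\beta$-invariants via the quantum-dimension formula~\eqref{eq:betaInvariants} is a harmless elaboration of what the paper leaves implicit.
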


\begin{proof}
Using Lemma~\ref{lem:invariants-pullback}, we may forget about the $r$-spin structure, and use the fact that the handle endomorphism $H$ is multiplication by $\kappa$, and therefore the trace of $H^{rn}$ is $\kappa^{rn}$. 
\end{proof}

\subsection{Another non-semisimple algebra}

Consider the algebra $\k[x]/(x^2)$. 
This is a commutative Frobenius algebra with $\epsilon(1)=0$ and $\epsilon(x)=1$. 
We calculate the comultiplication as before to get $\Delta(1) = 1\ot x + x\ot 1$ and $\Delta(x) = x\ot x$.  
The handle endomorphism is then given by $1\mapsto 2x$, $x\mapsto 0$, and is therefore nilpotent. 
We denote the pullback of this algebra to a closed $\Lambda_r$-Frobenius algebra by $F$. 

\begin{lemma}
If $r$ is odd then the invariants of $F$ are 
\begin{equation}
\beta_d^F = 2
	\, , \quad 
	\alpha_n^F =0
	\qquad 
	\textrm{for all $d|r$ and $n\in\Z_{\geqslant 1}$.} 
\end{equation}
If $r$ is even then the invariants of $F$ are 
\begin{equation}
\beta_d^F = 2
	\, , \quad 
	(\alpha_n^{+})^F =(\alpha_n^{-})^F
	  =0
	\qquad 
	\textrm{for all $d|r$ and $n\in\Z_{\geqslant 1}$.} 
\end{equation}
\end{lemma}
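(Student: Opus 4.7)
The plan is to reduce everything to the oriented ($r=1$) case by applying Lemma~\ref{lem:invariants-pullback} with $s=1$, so that the invariants of $F = P_{r,1}^*(\k[x]/(x^2))$ are governed by the ordinary Frobenius-algebra data of $\k[x]/(x^2)$ that was spelled out just before the statement. The two inputs I need are the $\k$-dimension of $\k[x]/(x^2)$ and the traces of powers of its handle endomorphism $H$.

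For the torus invariants, I would not use the $\beta$ clause of Lemma~\ref{lem:invariants-pullback} (which as stated is restricted to $d\mid s$), but instead apply the intrinsic description $\beta_d^F = \dim(F_d)$ from~\eqref{eq:betaInvariants}. By definition of the pullback, $F_d = (\k[x]/(x^2))_{d \bmod 1} = \k[x]/(x^2)$ for every $d\mid r$, giving $\beta_d^F = 2$ in both parities of $r$.

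For the higher-genus invariants, I would record that the handle endomorphism $H$ of $\k[x]/(x^2)$ sends $1\mapsto 2x$ and $x\mapsto 0$, so $H^2=0$ and hence $\Tr(H^m)=0$ for every $m\geqslant 1$. The $\alpha$-clauses of Lemma~\ref{lem:invariants-pullback} (with $s=1$) then give
\begin{equation*}
\alpha_n^F \;=\; \alpha_{nr}^{\k[x]/(x^2)} \;=\; \Tr\!\big(H^{nr}\big) \;=\; 0 \qquad (r \text{ odd})
\end{equation*}
and
\begin{equation*}
(\alpha_n^{\pm})^F \;=\; \alpha_{nr/2}^{\k[x]/(x^2)} \;=\; \Tr\!\big(H^{nr/2}\big) \;=\; 0 \qquad (r \text{ even}),
\end{equation*}
as claimed. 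There is no genuine obstacle here; the only point that requires a moment of care is noting that the $\beta_d$ statement for $d\nmid s$ has to be read off from~\eqref{eq:betaInvariants} rather than from the $\beta$-line of Lemma~\ref{lem:invariants-pullback}.
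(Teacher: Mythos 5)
Your proposal is correct and follows essentially the same route as the paper, whose proof simply defers to the argument for Lemma~\ref{lem:E}: reduce to the oriented case via Lemma~\ref{lem:invariants-pullback} and use that the handle endomorphism of $\k[x]/(x^2)$ is nilpotent, so all traces of its positive powers vanish. Your extra care in reading $\beta_d^F=\dim(F_d)=2$ off \eqref{eq:betaInvariants} rather than from the $\beta$-clause of Lemma~\ref{lem:invariants-pullback} (which as stated only covers $d\mid s$) is a sensible refinement of the same argument, not a different approach.
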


\begin{proof} This follows exactly the same reasoning as the proof of Lemma~\ref{lem:E}, using the fact that the handle endomorphism is nilpotent. 
\end{proof}

\subsection{Operations on algebras}
\label{sec:Ex_op}

Let $X = \bigoplus_{x\in \Z/r}X_x$ and $Y= \bigoplus_{x\in \Z/r}Y_x$ be two closed $\Lambda_r$-Frobenius algebras in $\Vect_\k$, with respective structure maps $(\mu^X_{x,y},\Delta^X_{x,y},\eta^X_1, \epsilon^X_{-1})$ and $(\mu^Y_{x,y},\Delta^Y_{x,y},\eta^Y_1, \epsilon^Y_{-1})$. 
We set 
\begin{equation}
X\oplus Y := \bigoplus_{x\in \Z/r} (X_x\oplus Y_x) \, , 
\qquad
X\otl Y := \bigoplus_{x\in \Z/r} (X_x\ot_\k Y_x)\,.
\end{equation}
Notice that in general $X\otl Y$ is a proper subspace of $X\ot Y$. 
Both $X\oplus Y$ and $X\otl Y$ naturally inherit the structure of a closed $\Lambda_r$-Frobenius algebra from~$X$ and~$Y$: 

\begin{lemma}\label{lem:plustimes}
	\begin{enumerate}[label={(\roman*)}]
		\item 
		$(X\oplus Y, \mu^X_{x,y}\oplus \mu^Y_{x,y},\Delta^X_{x,y}\oplus \Delta^Y_{x,y},\eta^X_1\oplus \eta^Y_1,\epsilon^X_{-1}\oplus \epsilon^Y_{-1}) \in \Lambda_r\!\operatorname{Frob}(\Vect_\k)$. 
		\item 
		$(X\otl Y, \mu^X_{x,y}\ot \mu^Y_{x,y}, \Delta^X_{x,y}\ot \Delta^Y_{x,y},\eta^X_1\ot \eta^Y_1,\epsilon^X_{-1}\ot \epsilon^Y_{-1}) \in \Lambda_r\!\operatorname{Frob}(\Vect_\k)$. 
		\item 
		\label{item:InvariantsDistribute}
		If~$\Sigma$ is a closed connected $r$-spin surface, then $\Zc_{X\oplus Y}(\Sigma) = \Zc_X(\Sigma) + \Zc_Y(\Sigma)$ and $\Zc_{X\otl Y}(\Sigma) = \Zc_X(\Sigma) \Zc_Y(\Sigma)$.
	\end{enumerate}
	\label{lem:dsum-tprod}
\end{lemma}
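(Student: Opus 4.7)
For (i), observe that the direct-sum structure maps such as $\mu^X_{x,y}\oplus\mu^Y_{x,y}$ are block-diagonal with respect to the decomposition $(X_x\oplus Y_x)\otimes(X_y\oplus Y_y)=(X_x\otimes X_y)\oplus(X_x\otimes Y_y)\oplus(Y_x\otimes X_y)\oplus(Y_x\otimes Y_y)$, acting as $0$ on the off-diagonal summands. Every diagram built from these maps --- including the Nakayama automorphism $N^{X\oplus Y}_a=N^X_a\oplus N^Y_a$ computed via \eqref{eq:Nakayama-Ca} --- is therefore block-diagonal, so each axiom \eqref{eq:clLrFa-co-associativity}--\eqref{eq:clLrFa-twist} and the deck relation $N^r=\mathrm{id}$ reduces on each block to the corresponding axiom for $X$ or $Y$.

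For (ii), the structure maps $\mu^{X\otl Y}_{x,y}$ and $\Delta^{X\otl Y}_{x,y}$ are tensor products involving a symmetric braiding that shuffles the middle factors in $(X_x\otimes Y_x)\otimes(X_y\otimes Y_y)$. I would first verify non-degeneracy by exhibiting $\delta^{X\otl Y}_a = (1\otimes\sigma\otimes 1)\circ(\delta^X_a\otimes\delta^Y_a)$ and invoking Lemma~\ref{lem:alt-def-clLrFa}, so that it suffices to check the axioms not involving $\Delta$ directly. From the resulting expressions one computes $N^{X\otl Y}_a=N^X_a\otimes N^Y_a$, hence $(N^{X\otl Y}_a)^r=\mathrm{id}$. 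Each remaining axiom then decomposes into the tensor product of the corresponding axiom for $X$ with that for $Y$, modulo symmetric braidings whose naturality makes the required cancellations automatic.

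For (iii), I would use the explicit formulas from Section~\ref{subsec:InvariantsFromClosedLambdaRFrobeniusAlgebras}: the sphere invariant is $\varepsilon_{-1}\circ\eta_1$, the torus invariants are the quantum dimensions $\dim(C_d)$ by \eqref{eq:betaInvariants}, and the higher-genus invariants are traces of compositions of handle maps $h_x$ (possibly twisted by Nakayama automorphisms in the even case). In the direct-sum case every such composition is block-diagonal, and since $\mathrm{tr}(f\oplus g)=\mathrm{tr}(f)+\mathrm{tr}(g)$ in $\Vect_\k$ (with the sphere and torus cases handled analogously by additivity of $\varepsilon\eta$ and of dimensions), we obtain $\Zc_{X\oplus Y}(\Sigma)=\Zc_X(\Sigma)+\Zc_Y(\Sigma)$. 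In the tensor-product case, every such composition is itself a tensor product of the corresponding compositions for $X$ and $Y$, so $\mathrm{tr}(f\otimes g)=\mathrm{tr}(f)\,\mathrm{tr}(g)$ together with multiplicativity of quantum dimension yields $\Zc_{X\otl Y}(\Sigma)=\Zc_X(\Sigma)\,\Zc_Y(\Sigma)$.

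The main obstacle throughout is purely bookkeeping: tracking the symmetric braidings that shuffle factors of $X$ past factors of $Y$ through each axiom of Definition~\ref{def:ClosedRFrobeniusAlgebra} and through each invariant computation. No new conceptual input is required beyond the symmetric monoidal structure of $\Vect_\k$ and the closed $\Lambda_r$-Frobenius axioms for $X$ and $Y$; in particular, the assumption that $\Sigma$ is connected in (iii) is essential so that the composition representing $\Zc(\Sigma)$ is a single endomorphism whose trace distributes over $\oplus$ and $\otimes$ as claimed.
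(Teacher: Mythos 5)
Your proposal is correct and follows essentially the same route as the paper: the axioms hold for $X\oplus Y$ and $X\otl Y$ because they are equalities of connected diagrams (your block-diagonal and tensor-factorisation arguments are exactly this), and part (iii) is obtained from $N^{X\oplus Y}_x=N^X_x\oplus N^Y_x$, $N^{X\otl Y}_x=N^X_x\otimes N^Y_x$, the analogous decompositions of the handle maps $H_x$, and additivity/multiplicativity of the trace. The only cosmetic difference is that you route part (ii) through the non-degeneracy criterion of Lemma~\ref{lem:alt-def-clLrFa}, whereas the paper verifies the axioms directly.
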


\begin{proof}
	The fact that $X\oplus Y$ and $X\otl Y$ both satisfy the axioms of a closed $\Lambda_r$-Frobenius algebra is a direct verification, and is based on the fact that all the axioms are given by equalities between connected diagrams. 
	
	For part~\ref{item:InvariantsDistribute} about the invariants, we first notice that $N^{X\oplus Y}_x\colon X_x\oplus Y_x\to X_x\oplus Y_x$ is given by $N^X_x\oplus N^Y_x$. 
	This is a straightforward verification that follows from the definition of the Nakayama automorphism. 
	Similarly, $N^{X\otl Y}_x\colon X_x\ot Y_x\to X_x\ot Y_x$ is equal to $N^X_x\ot N^Y_x$. 
	If $\Sigma$ is a surface of genus 1 then there is a divisor~$d$ of~$r$ such that the invariant associated to~$\Sigma$ by any given closed $\Lambda_r$-Frobenius algebra is $\epsilon \mu_{0,0}(N^{d-1}\ot 1) \Delta_{0,0}(u)$. 
	Using the definition of structure maps we get  
	\begin{align}
	\begin{aligned}
	\Zc_{X\oplus Y}(\Sigma) &= \epsilon^{X\oplus Y}\mu_{0,0}^{X\oplus Y}\big((N^{X\oplus Y})^{d-1}\ot 1\big) \Delta_{0,0}^{X\oplus Y}u^{X\oplus Y} \\
	&= 
	\epsilon^X \mu_{0,0}^X\big((N^X)^{d-1}\ot 1\big) \Delta_{0,0}^Xu^Y + \epsilon^Y \mu_{0,0}^Y\big((N^Y)^{d-1}\ot 1\big) \Delta_{0,0}^Yu^Y\\
	&= \Zc_X(\Sigma) + \Zc_Y(\Sigma)\,.
	\end{aligned}
	\end{align}
	A similar calculation shows that $\Zc_{X\otl Y}(\Sigma) = \Zc_X(\Sigma)\Zc_Y(\Sigma)$. 
	If the genus of $\Sigma$ is larger than 1, we use the fact that $\Zc(\Sigma) = \Tr(H_x^n)$ for some $n>0$ if $r$ is odd, and of the form $\Tr((H_x^+)^n)$ or $\Tr((H_x^+)^{n-1} H_x^-)$ for some $n>0$ if $r$ is even (recall~\eqref{eq:Hx}--\eqref{eq:alphaEvenMinus}). 
	One computes that $H_x^{X\oplus Y} = H_x^X\oplus H_x^Y$ and $H_x^{X\otl Y} = H_x^X\ot H_x^Y$ for~$r$ odd, and similar identities hold when~$r$ is even. 
\end{proof}

\subsection {A closed \texorpdfstring{$\Lambda_r$}{Lambda\_r}-Frobenius algebra with distinct torus invariants}
\label{sec:Ex_dist}

First let us assume that $r>1$ is odd and has prime factorisation 
\begin{equation}
r=p_1^{l_1}\cdots p_k^{l_k}
\end{equation} 
with increasingly ordered primes $2<p_1 < \dots < p_k$ and exponents $l_1,\dots,l_k>0$.
Consider the closed $\Lambda_r$-Frobenius algebra
\begin{align}
D:=\bigoplus_{i=1}^{k}(D^{(p_i)})^{\oplus c_i}\,,
\quad\text{where}\quad
D^{(p_i)} & =\bigoplus_{m=1}^{l_i}P_{r,p_i^m}^*(B^{(p_i^m)})\,,
\\
c_1 & =1 \, , 
\\
  c_{i+1} & =c_i\cdot(\beta_r^{D^{(p_i)}}+1)
\label{eq:def:D}
\end{align}
and the algebras $B^{(p_i^m)}$ are as defined in Section~\ref{sec:Ex_B}.

\begin{proposition}
	The torus invariants of $D$ 
	are pairwise distinct.
	\label{prop:inv-D-distinct}
\end{proposition}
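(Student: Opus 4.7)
The plan is to exhibit $\beta_d^D$ as a kind of positional representation in a mixed base defined by the weights $c_i$, so that the assignment $d \mapsto \beta_d^D$ becomes injective on the set of divisors of $r$.

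First I would compute $\beta_d^{D^{(p_i)}}$ explicitly for each $i$ and each $d \mid r$. Writing $M_i := v_{p_i}(d)$ for the $p_i$-adic valuation of $d$, the additivity of $\beta$ under $\oplus$ (Lemma~\ref{lem:plustimes}(iii)), the pullback formula (Lemma~\ref{lem:invariants-pullback}), and the identity $\dim B^{(s)}_x = s+1$ for $x \equiv 0 \mod{s}$ and $\dim B^{(s)}_x = 1$ otherwise (from Section~\ref{sec:Ex_B}) combine to give
\[
\beta_d^{D^{(p_i)}} \;=\; \sum_{m=1}^{l_i}\dim\!\big(B^{(p_i^m)}_{d \mod{p_i^m}}\big) \;=\; l_i + \sum_{m=1}^{M_i} p_i^m.
\]
In particular $\beta_d^{D^{(p_i)}}$ is a strictly increasing function of $M_i \in \{0,1,\dots,l_i\}$, attains its maximum $\beta_r^{D^{(p_i)}}$ at $M_i = l_i$, and depends on $d$ only through $M_i$.

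Next, Lemma~\ref{lem:plustimes}(iii) gives $\beta_d^D = \sum_{i=1}^{k} c_i\,\beta_d^{D^{(p_i)}}$. I would then prove by induction on $i$ the key inequality
\[
c_{i+1} \;>\; \sum_{j=1}^{i} c_j\,\beta_r^{D^{(p_j)}}.
\]
The inductive step rewrites $c_{i+1} = c_i\,\beta_r^{D^{(p_i)}} + c_i$ and applies the hypothesis $c_i > \sum_{j<i} c_j\,\beta_r^{D^{(p_j)}}$, while the base case reduces to the definition of $c_2$.

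Finally, for distinct divisors $d \neq d'$ of $r$, let $i_0$ be the largest index with $v_{p_{i_0}}(d) \neq v_{p_{i_0}}(d')$. Cancelling the coinciding terms for $j > i_0$ yields
\[
\beta_d^D - \beta_{d'}^D \;=\; c_{i_0}\!\left(\beta_d^{D^{(p_{i_0})}} - \beta_{d'}^{D^{(p_{i_0})}}\right) + \sum_{j<i_0} c_j\!\left(\beta_d^{D^{(p_j)}} - \beta_{d'}^{D^{(p_j)}}\right).
\]
The first summand is a non-zero integer multiple of $c_{i_0}$ (non-zero by the strict monotonicity of $\beta_{\bullet}^{D^{(p_{i_0})}}$ in $M_{i_0}$), so it has absolute value at least $c_{i_0}$, whereas the remaining sum has absolute value bounded by $\sum_{j<i_0} c_j\,\beta_r^{D^{(p_j)}} < c_{i_0}$ by the inductive inequality. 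Thus $\beta_d^D \neq \beta_{d'}^D$. The main technical point to double-check is the pullback bookkeeping, namely that $(P_{r,s}^*(B^{(s)}))_d$ sits in the component of $B^{(s)}$ labelled by $d \mod{s}$, which is exactly what makes $\beta_d^{P_{r,p_i^m}^*(B^{(p_i^m)})}$ depend only on whether $p_i^m \mid d$; once this is confirmed, the argument is a base-$c_{i+1}$ decoding.
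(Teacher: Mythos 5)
Your proof is correct. The computation of the invariants is identical to the paper's: you arrive at the same formula $\beta_d^{D^{(p_i)}} = l_i + \sum_{m=1}^{M_i} p_i^m$ via Lemma~\ref{lem:invariants-pullback} and Lemma~\ref{lem:plustimes}, and both arguments rest on the same structural insight that the weights $c_i$ turn $\beta_d^D$ into a mixed-radix encoding of the tuple of valuations $(v_{p_i}(d))_i$. The finishing step is where you diverge: the paper takes the \emph{minimal} index at which the valuations of $d$ and $e$ differ, uses the divisibility $c_i \mid c_j$ for $i<j$ to reduce $\beta_d^D-\beta_e^D$ modulo $c_{i+1}$, and observes that the residue $\sum_{a_i<k\le b_i}p_i^k$ is a nonzero class modulo $\beta_r^{D^{(p_i)}}+1$; you instead take the \emph{maximal} differing index $i_0$, cancel the coinciding higher terms, and run an archimedean estimate, showing the $c_{i_0}$-term dominates the tail via the inequality $c_{i_0} > \sum_{j<i_0} c_j\,\beta_r^{D^{(p_j)}}$ (whose induction is correct, and the needed bound $|\beta_d^{D^{(p_j)}}-\beta_{d'}^{D^{(p_j)}}|\le \beta_r^{D^{(p_j)}}$ holds since each $\beta_d^{D^{(p_j)}}$ lies in $[l_j,\beta_r^{D^{(p_j)}}]$). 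The two finishes are dual to one another: the congruence argument is slightly slicker and avoids absolute values, while your inequality version is more elementary and makes the ``no carrying between digits'' intuition explicit. Either is acceptable.
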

\begin{proof}
	We start with computing the invariants of $D^{(p_i)}$.
	First, by Lemma~\ref{lem:invariants-pullback} we have
	\begin{align}
	\beta_d^{P_{r,p_i^m}^*(B^{(p_i^m)})}=
	\begin{cases}
	1 & \text{if $p_i^m\nmid d$,}\\
	1+p_i^m & \text{if $p_i^m | d$.}
	\end{cases}
	\label{eq:inv-B-pb}
	\end{align}
	Applying Lemma~\ref{lem:dsum-tprod}\ref{item:InvariantsDistribute} to Lemma~\ref{lem:roddinvariants}, this gives us 
	\begin{align}
	\beta_d^{D^{(p_i)}}=
	\sum_{\substack{1\le m\le l_i \\ p_i^m \nmid d}}1
	+\sum_{\substack{1\le m\le l_i \\ p_i^m | d}}(1+p_i^m)
	= l_i
	+\sum_{\substack{1\le m\le l_i \\ p_i^m | d}}p_i^m \,, 
	\label{eq:inv-D-factor}
	\end{align}
	and hence 
	\begin{align}
	\beta_d^{D}
	= \sum_{1\le i\le k} 
	c_i\beta_d^{D^{(p_i)}}
	= \sum_{1\le i\le k} c_i\Big(
	l_i +\sum_{\substack{1\le m\le l_i \\ p_i^m | d}}p_i^m \Big)\,.
	\label{eq:inv-D-full}
	\end{align}
	
	We need to show that if $d$ and $e$ are distinct divisors of~$r$, then $\beta_d^D\neq \beta_e^D$. 
	Write $d = p_1^{a_1}\cdots p_k^{a_k}$ and $e=p_1^{b_1}\cdots p_k^{b_k}$. Let $i$ be the minimal index such that $a_i\neq b_i$. Without loss of generality, we assume that $a_i<b_i$.
	It holds that $\beta_d^{D^{(p_j)}} = \beta_e^{D^{(p_j)}}$ for $j<i$. 
	From~\eqref{eq:inv-D-full} we get 
	\begin{equation}
	\beta_d^D-\beta_e^D 
	= 
	c_i\big(\beta_d^{D^{(p_i)}} - \beta_e^{D^{(p_i)}}\big) + \sum_{j=i+1}^k c_j\big(\beta_d^{D^{(p_j)}}-\beta_e^{D^{(p_j)}} \big)
	\end{equation}
	By definition of the numbers $c_i$ it holds that $c_i|c_j$ for $i<j$. Reducing the above equation modulo $c_{i+1}$ gives 
	\begin{equation}
	\beta_d^D-\beta_e^D = c_i\big(\beta_d^{D^{(p_i)}} - \beta_e^{D^{(p_i)}}\big) \text{ mod } c_{i+1}
	\end{equation}
	Dividing by~$c_i$ and using again the definition of $c_{i+1}$ and~\eqref{eq:inv-D-factor}, we get
	\begin{equation} 
	\frac{1}{c_i}\big(\beta_d^D-\beta_e^D\big) = \sum_{a_i< k\leq b_i}p_i^k \quad \text{mod } \beta_r^{D^{(p_i)}}+1 \,.
	\end{equation} 
	Since the number $\sum_{a_i< k\leq b_i}p_i^k$ is a positive integer that is less than $\beta_r^{D^{(p_i)}}+1$, it holds that the difference $\beta_d^D-\beta_e^D$ is non-zero as well, and we are done. 
\end{proof}

In case $r$ is even, one just uses the algebras $C$ instead of $B$. The proof that the resulting algebra has distinct torus invariants is similar. 
Together with Theorem~\ref{thm:classification} and the discussion in Section~\ref{subsec:InvariantsFromClosedLambdaRFrobeniusAlgebras}, this proves Theorem~\ref{thm:semimain}.

\subsection{Semisimplicity and torus invariants}
\label{subsec:Semisimplicity}

The algebras $B^{(r)}$ and $C^{(r)}$ that are used to provide distinct invariants of $r$-spin tori are both non-semisimple.
In this section we show that for a semisimple algebra the torus invariants can have at most two distinct values.
We assume that the field~$\k$ is algebraically closed. 

\medskip 

We call a closed $\Lambda_r$-Frobenius algebra $D=\bigoplus_{x\in\Zb/r}D_x$ \textsl{semisimple} if $D$ as an algebra with product $\mu=\sum_{x,y\in\Zb/r}\mu_{x,y}$ and unit $\eta=\eta_{+1}$ is semisimple.
Consider a closed $\Lambda_r$-Frobenius algebra $D$ and define a new grading by $D'_a:=D_{1-a}$.
By setting $\mu'_{a,b}:=\mu_{1-a,1-b}\colon D_{1-a}\otimes D_{1-b}=D'_{a}\otimes D'_{b}\lra D'_{a+b}=D_{1-a-b}=D_{1-a+1-b-1}$ and $\eta':=\eta_{+1}\colon \k \lra D_{+1}=D'_{0}$ we obtain a $\Zb/r$-graded algebra.
Note that the counit with this new grading $\varepsilon':=\varepsilon_{-1}\colon D_{-1}=D'_{0}\lra \k$ is of degree $-2$.

\begin{theorem}
  Let $D$ be a semisimple closed $\Lambda_r$-Frobenius algebra in $\Vect_\k$ or in $\SVect_\k$. 
  Then for all $a\in\Zb/r$ we have
  \begin{align}
    \begin{aligned}
      D_a\cong
      \begin{cases}
	D_1&\text{if $r$ is odd,}\\
	D_1&\text{if $r$ is even and $a$ is odd,}\\
	D_0&\text{if $r$ is even and $a$ is even.}\\
      \end{cases}
    \end{aligned}
    \label{eq:thm:semisimple-clLrFrob-alg}
  \end{align}
  \label{thm:semisimple-clLrFrob-alg}
\end{theorem}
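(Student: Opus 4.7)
My plan is to reduce the theorem to a structure-theoretic classification of the underlying semisimple algebra. First, I pass to the shifted grading $D'_a := D_{1-a}$, which turns $D$ into a genuine $\Z/r$-graded unital algebra with unit in $D'_0 = D_1$. Because $\k$ is algebraically closed of characteristic zero, this grading is equivalent to an algebra automorphism $\sigma : D \to D$ of order dividing $r$, given by $\sigma|_{D'_a} = \zeta^a \cdot \id$, for $\zeta$ a primitive $r$-th root of unity. Under this relabelling, the non-degenerate Frobenius pairing becomes a perfect graded pairing $D'_a \otimes D'_{2-a} \to \k$, already yielding $\dim D_a = \dim D_{-a}$.

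Next I use semisimplicity to decompose $D \cong \prod_{i=1}^N S_i$ into simple (super) matrix blocks. The automorphism $\sigma$ permutes the central primitive idempotents $e_i$, hence permutes the blocks $S_i$. Grouping these into $\sigma$-orbits $O_1, \dots, O_m$ gives $\sigma$-invariant central idempotents $E_j := \sum_{i \in O_j} e_i \in D'_0$, which are mutually orthogonal and sum to the unit. Consequently $D = \prod_j D^{(j)}$ splits as a product of $\Z/r$-graded ideals, and one verifies directly that each $D^{(j)}$ is itself a closed $\Lambda_r$-Frobenius algebra (the structure maps restrict cleanly because the $E_j$ are central, orthogonal, and $\sigma$-invariant; the orthogonality kills all cross terms in the Frobenius pairing). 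This reduces the theorem to the single-orbit case.

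In the single-orbit case, $D \cong S^{\oplus d}$ for a simple block $S$ and some $d \mid r$, with $\sigma$ cyclically permuting the $d$ copies and applying an inner automorphism $\operatorname{Inn}(g)$ on a chosen copy, for some $g \in S^{\times}$ with $g^{r/d}$ central. Diagonalising $g$ identifies each eigenspace $D'_a$ with an explicit weight space whose dimension is read off from the multiset of eigenvalues of $g$. The twisted commutativity axiom, together with the Nakayama relations $N_a^a = \id$ and $N_a^r = \id$, constrains both the order and the eigenvalue distribution of $g$. Combined with Frobenius non-degeneracy, these conditions force the multiset of eigenvalues of $g$ to be uniformly distributed over the values $\{\zeta^j : j \in \Z/r\}$ when $r$ is odd, and uniformly within each of two parity classes when $r$ is even. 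A direct count of the weight spaces then gives $\dim D_a$ independent of $a$ (respectively depending only on the parity of $a$), and the same analysis in $\SVect_\k$ keeps track of the $\Z/2$-grading on $S$.

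The main obstacle is this last step: showing that the twisted commutativity together with the Nakayama relations really are strong enough to force equidistribution of the spectrum of $g$, which essentially amounts to classifying all closed $\Lambda_r$-Frobenius structures supported on a single $\sigma$-orbit. The even-$r$ parity split arises naturally because $\sigma^{r/2}$ may coincide with the grading involution (as in the Arf algebra of Section~\ref{sec:Ex_A}), producing two non-isomorphic classes of graded pieces distinguished by parity. Summing the resulting dimension counts over the orbits $O_j$ then yields the claim for an arbitrary semisimple $D$.
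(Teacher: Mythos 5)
Your argument reduces the theorem to a classification problem that you then do not solve, and you say so yourself: nothing in the proposal actually shows that twisted commutativity together with the Nakayama relations forces the spectrum of $g$ to be equidistributed over the $r$-th roots of unity. That claim is not an auxiliary technicality but is \emph{equivalent} to the theorem in the single-block case: if $g$ has eigenvalue multiplicity $m_j$ at $\zeta^j$, then $\dim D'_a=\sum_j m_jm_{j+a}$, and these sums are all equal precisely when all the $m_j$ are equal. So the ``main obstacle'' you flag is the entire content of the statement, restated in different coordinates, and the axioms you propose to use for it are the wrong levers: twisted commutativity and $N_a^a=N_a^r=\id$ do not obviously control the dimensions of the graded pieces, while the non-degeneracy observation you do extract from the Frobenius form only pairs $D'_a$ with $D'_{2-a}$, giving $\dim D_b=\dim D_{-b}$, which is strictly weaker than the claim. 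The preliminary reductions (orbit decomposition of the central idempotents, the claim that each orbit component is again a closed $\Lambda_r$-Frobenius algebra) are plausible but add work without bringing you closer to the missing step.

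The paper closes the gap in two lines and needs none of the block/orbit machinery. After the same regrading $D'_a=D_{1-a}$, semisimplicity implies (Kock, Lem.\ 2.2.8) that the counit has the form $\varepsilon'=\Tr(X\cdot-)$ for an \emph{invertible} element $X\in D'$, where $\Tr$ is the trace of the regular representation. Since the trace of left multiplication by a homogeneous element of nonzero degree vanishes, and $\varepsilon'$ is supported in a single degree, $X$ is forced to be homogeneous of degree $\pm 2$; invertibility then makes left multiplication by $X$ an isomorphism $D'_a\to D'_{a\pm2}$ for every $a\in\Zb/r$. Chaining these around $\Zb/r$ gives all $D'_a$ isomorphic when $r$ is odd, and the two parity classes when $r$ is even, which is exactly the statement. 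If you want to rescue your route, this is the input you need in the single-orbit case: the homogeneous invertible element implementing the Frobenius form is what forces the equidistribution of the spectrum of $g$, not the commutativity or deck-transformation relations.
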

\begin{proof}
  In both cases $D$ is a direct sum of matrix algebras.
  By \cite[Lem.\,2.2.8]{Kockbook} the counit $\varepsilon'$ on $D'$ is of the form
  $\varepsilon=\Tr(X\cdot-)$, where $X\in D'$ is invertible, and $\Tr$ is the trace on the regular representation.
  Since $\varepsilon'$ is of degree $-2$, $X$ is of degree $+2$, i.e.\ we have isomorphisms $\mu(X\otimes -)\colon D'_a\lra D'_{a+2}$ for every $a\in\Zb/r$.

  In particular we have $D'_a\cong D'_0$ for every $a\in\Zb/r$ if $r$ is odd.
  If $r$ is even, we have 
  $D'_a\cong D'_0$ if $a$ is even and
  $D'_a\cong D'_1$ if $a$ is odd.
\end{proof}

\begin{corollary}
  The torus invariants of a semisimple closed $\Lambda_r$-Frobenius algebra in $\Vect_\k$ or in $\SVect_\k$ can take at most two distinct values.
  \label{cor:ssi-invariants}
\end{corollary}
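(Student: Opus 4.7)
The plan is to combine Theorem~\ref{thm:semisimple-clLrFrob-alg} with the formula~\eqref{eq:betaInvariants} identifying the torus invariants with quantum dimensions. More precisely, for any divisor~$d$ of~$r$ the invariant $\beta_d^D$ equals $\dim_{\mathcal C}(D_d)$, where the quantum dimension is ordinary dimension in $\Vect_\k$ and super-dimension in $\SVect_\k$. Since quantum dimension is an invariant of the isomorphism class in~$\mathcal C$, any two objects of~$D$ that are isomorphic in~$\mathcal C$ contribute the same value to the set $\{\beta_d^D\}_{d|r}$.

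Next I would invoke Theorem~\ref{thm:semisimple-clLrFrob-alg} to read off the isomorphism classes. In the case where~$r$ is odd, the theorem gives $D_a \cong D_1$ for every $a\in\Z/r$, so $\beta_d^D = \dim_{\mathcal C}(D_1)$ for all $d|r$, giving a single value. In the case where~$r$ is even, the theorem asserts $D_a\cong D_1$ for~$a$ odd and $D_a\cong D_0$ for~$a$ even, hence $\beta_d^D\in\{\dim_{\mathcal C}(D_0),\dim_{\mathcal C}(D_1)\}$ according to the parity of the divisor~$d$.

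In both cases the set of torus invariants has cardinality at most two, which is the statement of the corollary. There is essentially no obstacle beyond observing this reduction to Theorem~\ref{thm:semisimple-clLrFrob-alg}; the only subtle point worth checking is that the isomorphisms produced in that theorem (namely multiplication by the invertible element~$X$ of degree~$+2$) are morphisms in $\mathcal C$ (in particular even in the super case), so that they preserve the quantum dimension used to compute~$\beta_d^D$. This is automatic since~$X$ is a homogeneous element of the algebra and the structure maps of a closed $\Lambda_r$-Frobenius algebra live in~$\mathcal C$.
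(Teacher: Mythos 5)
Your proposal is correct and is exactly the argument the paper intends: the corollary is stated without proof as an immediate consequence of Theorem~\ref{thm:semisimple-clLrFrob-alg} together with the identification $\beta_d = \dim(D_d)$ from~\eqref{eq:betaInvariants}, and your observation that quantum dimension (superdimension in $\SVect_\k$) depends only on the isomorphism class in~$\mathcal C$ is precisely the missing glue. Your side remark about the degree-$+2$ multiplication by~$X$ being a morphism in~$\mathcal C$ is a fair point and is indeed automatic here.
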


\section{Symmetric monoidal categories, invariants, and TQFTs}
\label{sec:symmcat}

In this section we answer the question about $r$-spin invariants stated in the introduction, and prove Theorem \ref{thm:main} by using symmetric monoidal categories that are freely generated by an algebraic structure.

\subsection{Basic notions}
\label{subsec:BasicNotions}

We start by recalling the relevant definitions and results from \cite{meir21}.
Let $\C$ be a symmetric monoidal category which we further assume to be $\k$-linear and rigid (hence in particular all Hom sets are $\k$-vector spaces, and every object has a chosen dual).
An \textsl{algebraic structure} in $\C$ consists of an object $W \in \C$ together with a choice of morphisms, called \textsl{structure tensors}, $x_i\colon W^{\ot q_i}\to W^{\ot p_i}$ for non-negative integers $q_i,p_i$. 
We call the tuple $((p_i,q_i))$ the \textsl{type} of $(W,(x_i))$. 
If $(x_i)$ are clear from the context, we write~$W$ instead of $(W,(x_i))$.

For example, algebraic structures of type $((1,1)^k)$ are just representations of the free algebra on $k$ generators. Algebraic structures of type $((1,2))$ include Lie algebras and associative algebras. 
Algebraic structures of type $((1,2),(2,1),(0,1),(1,0))$ include bialgebras and Frobenius algebras (with structure maps $\Delta,\mu,\eta,\epsilon$, respectively). 
A Hopf algebra is an algebraic structure of type $((1,2),(2,1),(0,1),(1,0),(1,1))$, where the extra structure tensor corresponds to the antipode. 

A closed $\Lambda_r$-Frobenius algebra is an algebraic structure with underlying object~$W$ and tensors $(P_0,\ldots, P_{r-1},\mu,\Delta,\epsilon,\eta)$ where, for $i \in \{0,\ldots, r-1\}$, $P_i\colon W\to W$ is an idempotent and $W=\bigoplus_i \operatorname{Im}(P_i)$. 
We write $W_i = \operatorname{Im}(P_i)$. 
The structure tensor~$\mu$ then stands for $\sum_{i,j,k}\mu_{i,j}^k$, and $\Delta$ is interpreted similarly. 
To avoid cumbersome notation we often just write the collection of our structure tensors as $(x_i)$ and denote the type of our algebraic structure by $((p_i,q_i))$.  

A \textsl{diagram} contains boxes and strings. Every box in the diagram is labelled by one of the structure tensors or $1_W$. A box that is labelled by $1_W$ has one input string and one output string, and a box labelled by the structure tensor $x_i$ has $q_i$ input strings and $p_i$ output strings. 
Some of the input strings may be connected to some output strings, and strings may be permuted. 
We think of diagrams with $q$ input strings and $p$ output strings as representing a morphism $W^{\ot q}\to W^{\ot p}$. 
An example with $q=3=p$ is 
\begin{equation}\scalebox{0.7}{
\begin{tikzpicture}[baseline={(current  bounding  box.center)}]
	\begin{pgfonlayer}{nodelayer}
		\node [style=none] (3) at (-6.75, 3.5) {};
		\node [style=none] (4) at (-7, 2.5) {};
		\node [style=none] (5) at (-6.5, 2.5) {};
		\node [style=none] (6) at (-3.25, 2.5) {};
		\node [style=none] (7) at (-5.25, 3.5) {};
		\node [style=none] (8) at (-4.75, 3.5) {};
		\node [style=none] (9) at (-3.25, 3.5) {};
		\node [style=none] (10) at (-5, 2.5) {};
		\node [style=none] (11) at (-6.5, 1.25) {};
		\node [style=none] (12) at (-7, 1.25) {};
		\node [style=none] (13) at (-5.25, 4.25) {};
		\node [style=none] (14) at (-4.75, 4.25) {};
		\node [style=none] (15) at (-5, 1.25) {};
		\node [style=none] (16) at (-3.25, 4.25) {};
		\node [style=none] (17) at (-3.75, 1.75) {};
		\node [style=none] (18) at (-6.75, 4.25) {};
		\node [style=1function] (19) at (-3.25, 3) {$x_3$};
		\node [style=2function] (20) at (-5, 3) {$x_2$};
		\node [style=2function] (21) at (-6.75, 3) {$x_1$};
		\node [style=none] (22) at (-2, 4.25) {};
		\node [style=none] (23) at (-2, 1.75) {};
	\end{pgfonlayer}
	\begin{pgfonlayer}{edgelayer}
		\draw [in=90, out=-90] (14.center) to (7.center);
		\draw [in=90, out=-90] (13.center) to (8.center);
		\draw (18.center) to (3.center);
		\draw [in=90, out=-90, looseness=1.25] (5.center) to (12.center);
		\draw [in=-90, out=90] (11.center) to (4.center);
		\draw [in=75, out=-105, looseness=0.75] (6.center) to (15.center);
		\draw (9.center) to (16.center);
		\draw [in=105, out=-90] (10.center) to (17.center);
		\draw [bend left=270, looseness=1.50] (22.center) to (16.center);
		\draw (22.center) to (23.center);
		\draw [in=-75, out=-90, looseness=1.50] (23.center) to (17.center);
	\end{pgfonlayer}
\end{tikzpicture}
} \, . 
\end{equation}
Two diagrams are equivalent if and only if they define the same map for every algebraic structure of type $((p_i,q_i))$, cf.\ \cite[Def.\,4.1]{meir20}.
We define $\operatorname{Con}^{p,q}$ to be the vector space freely spanned by diagrams with $q$ input strings and $p$ output strings. 
In particular, $\operatorname{Con}^{0,0}$ is freely spanned by closed diagrams, and it has the structure of a commutative $\k$-algebra, where the multiplication is given by juxtaposition of diagrams. 
We write $\winf \equiv \winf((p_i,q_i))$ for the algebra $\operatorname{Con}^{0,0}$. 

In \cite{meir21} the universal category $\C_{\textrm{univ}}$ generated by a dualisable object~$W$ with structure tensors of type $((p_i,q_i))$ was constructed. 
The objects in $\C_{\textrm{univ}}$ are formal direct sums of objects of the form $W^{a,b}:= W^{\ot a}\ot (W^*)^{\ot b}$, and the Hom spaces are defined using the spaces $\operatorname{Con}^{p,q}$. 
The category $\C_{\textrm{univ}}$ satisfies the following universal property:

\begin{proposition}\cite[Prop.\,3.1]{meir21} 
	Let $\D$ be a symmetric monoidal category. 
	Evaluation at $W$ gives a bijection between isomorphism classes of symmetric monoidal functors $\C_{\textrm{univ}}\to \D$ and isomorphism classes of algebraic structures of type $((p_i,q_i))$ in $\D$.
\end{proposition}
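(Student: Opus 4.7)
The plan is to construct the inverse of the evaluation map explicitly and to verify that the two constructions are mutually inverse at the level of isomorphism classes. In the forward direction, given a symmetric monoidal functor $F\colon \C_{\textrm{univ}}\to \D$, one sets $V := F(W)$ together with structure tensors $y_i := F(x_i)\colon V^{\ot q_i}\to V^{\ot p_i}$. Since $F$ is symmetric monoidal and $\D$ is rigid, $(V,(y_i))$ is an algebraic structure of the required type, and replacing $F$ by a monoidally naturally isomorphic functor yields an isomorphic algebraic structure, so this assignment descends to isomorphism classes.

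For the inverse, given $(V,(y_i))$, one constructs $F$ by setting $F(W^{a,b}):=V^{\ot a}\ot (V^*)^{\ot b}$ on generating objects and extending additively to the direct sums that make up a general object of $\C_{\textrm{univ}}$. On morphisms, each diagram representing an element of $\operatorname{Hom}_{\C_{\textrm{univ}}}(W^{a,b},W^{c,d})$ is read directly as a morphism in $\D$: a box labelled by $x_i$ is replaced by $y_i$, a box labelled by $1_W$ by $1_V$, and strings, permutations, and caps/cups are interpreted via the symmetric monoidal and rigid structure of $\D$. Extending $\k$-linearly to formal combinations of diagrams yields a candidate action on Hom spaces.

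The main obstacle is well-definedness on morphisms: we must verify that equivalent diagrams produce the same morphism in $\D$. But this is precisely built into the definition of diagram equivalence recalled above, which equates two diagrams if and only if they define the same map for every algebraic structure of type $((p_i,q_i))$ in any target --- in particular for $(V,(y_i))$ in $\D$. Functoriality and compatibility with the symmetric monoidal and rigid structure then follow because composition, tensor product, braiding, and duality are all implemented diagrammatically in $\C_{\textrm{univ}}$, and their diagrammatic interpretation in $\D$ agrees tautologically with the categorical operations in $\D$. Finally, mutual inverseness at the level of isomorphism classes is routine: starting from an algebraic structure, evaluating the associated functor at $W$ returns the same structure on the nose; conversely, given $F$, the reconstructed functor agrees with $F$ on every generating object $W^{a,b}$ and on every diagrammatic morphism, hence is monoidally naturally isomorphic to $F$.
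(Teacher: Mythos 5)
Your argument is correct and is essentially the expected proof of this universal property; note that the paper itself does not prove the statement but imports it from \cite{meir21}, where the argument runs along the same lines (a symmetric monoidal functor out of the freely generated category $\C_{\textrm{univ}}$ is determined by, and may be freely prescribed on, the generating object $W$ and its structure tensors, with diagrams evaluated in $\D$ exactly as you describe). The only point worth making explicit is that your well-definedness step leans on the diagram equivalence being defined with respect to algebraic structures in \emph{arbitrary} targets $\D$ --- which is how it is recalled here, so your appeal is legitimate --- whereas the cleaner way to see it is that the Hom spaces of $\C_{\textrm{univ}}$ are free spans of diagrams subject only to the relations of a $\k$-linear rigid symmetric monoidal category, which hold in $\D$ by coherence, so there are genuinely no further relations to check.
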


We would like to consider here only algebraic structures that satisfy the axioms of a closed $\Lambda_r$-Frobenius algebra. All the axioms we consider can be thought of as the vanishing of certain elements in $\operatorname{Con}^{p,q}$ (for some $p,q\in\N$). A \textsl{theory} is a collection of axioms $\T\subseteq \bigsqcup_{p,q}\operatorname{Con}^{p,q}$. 
An algebraic structure of type $((p_i,q_i))$ is said to be a \textsl{model} of $\T$ if every element of $\T$ is zero when considered as a map between tensor powers of $W$. We consider here the theory $\T$ of closed $\Lambda_r$-Frobenius algebras. 
In \cite[Sect.\,3.1]{meir21} a quotient $\C_{\textrm{univ}}^{\T}$ of $\C_{\textrm{univ}}$ was constructed which has the following universal property:

\begin{proposition}\label{prop:univ2}\cite[Prop.\,3.5]{meir21} 
	Let $\D$ be a symmetric monoidal category. Evaluation at $W$ gives a bijection between isomorphism classes of symmetric monoidal functors $\C_{\textrm{univ}}^{\T}\to \D$ and isomorphism classes of closed $\Lambda_r$-Frobenius algebras in $\D$. 
\end{proposition}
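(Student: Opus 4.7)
The plan is to deduce this proposition from the preceding Proposition~\ref{prop:univ2}'s antecedent (the universal property of $\C_{\textrm{univ}}$) by invoking the universal property of the quotient $\C_{\textrm{univ}}^{\T}$. By that antecedent, isomorphism classes of symmetric monoidal functors $F\colon \C_{\textrm{univ}}\to \D$ are in natural bijection with isomorphism classes of algebraic structures $(V,(y_i))$ of type $((p_i,q_i))$ in $\D$, the correspondence sending $F$ to $(F(W),(F(x_i)))$. So I would start by writing this bijection down explicitly.

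Next I would invoke the defining property of the quotient $\C_{\textrm{univ}}^{\T}$ recalled in \cite[Sect.\,3.1]{meir21}: a symmetric monoidal functor $F\colon \C_{\textrm{univ}} \to \D$ factors (necessarily uniquely up to unique monoidal natural isomorphism) through $\C_{\textrm{univ}}^{\T}$ if and only if $F(t) = 0$ in $\D$ for every $t \in \T$. Transporting this condition across the bijection above shows that isomorphism classes of symmetric monoidal functors $\C_{\textrm{univ}}^{\T} \to \D$ are in bijection with isomorphism classes of algebraic structures in $\D$ that are models of $\T$.

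The final step is to identify such models with closed $\Lambda_r$-Frobenius algebras in $\D$. The theory $\T$ is chosen so as to encode exactly the axioms of Definition~\ref{def:ClosedRFrobeniusAlgebra}: the $r$ projectors $P_0,\dots,P_{r-1}$ must be orthogonal idempotents summing to $1_W$ (so that the image $W_a := P_a W$ gives the $\Z/r$-grading $W = \bigoplus_{a} W_a$); the morphisms $\mu,\Delta,\eta,\varepsilon$, composed with appropriate projectors to recover $\mu_{a,b}$, $\Delta_{a,b}$, $\eta_1$, $\varepsilon_{-1}$, must satisfy (co)associativity, (co)unitality and the Frobenius relation; and the commutativity, twist, and deck transformation relations involving the Nakayama automorphisms must hold. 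The key point is that although $N_a$ is a derived morphism~\eqref{eq:Nakayama-Ca}, it unfolds into an explicit composite diagram in $\mu,\Delta,\eta,\varepsilon$ and the projectors, so that every axiom of Definition~\ref{def:ClosedRFrobeniusAlgebra} is genuinely the vanishing of a difference of two diagrams and hence lies in $\bigsqcup_{p,q}\operatorname{Con}^{p,q}$. Composing the two bijections then yields the claim.

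The step I expect to require the most care is the bookkeeping in this last paragraph: one must check that labelling conventions for indices $a\in\Z/r$ on the structure tensors (used freely in Section~\ref{subsec:TQFTsClosedLambdaRFrobeniusAlgebras}) really do descend from, and are recovered by, the projectors $P_a$ in the universal setting, and that none of the axioms requires data beyond what is encoded in $\T$. Once that translation is made explicit, each axiom becomes a relation in $\operatorname{Con}^{p,q}$ of the right type, and both directions of the bijection go through essentially formally.
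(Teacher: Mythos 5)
The paper does not prove this proposition itself --- it is imported verbatim from \cite[Prop.\,3.5]{meir21} --- and your reconstruction (universal property of $\C_{\textrm{univ}}$, factorisation through the quotient precisely when the corresponding algebraic structure is a model of $\T$, identification of models of $\T$ with closed $\Lambda_r$-Frobenius algebras) is exactly the intended argument. The only wrinkle, which the paper itself elides, is that an algebraic structure here is a \emph{single} object $W$ with idempotents $P_a$, whereas Definition~\ref{def:ClosedRFrobeniusAlgebra} asks for a collection of objects $C_a$; matching the two requires $\D$ to split idempotents (or a reformulation of the axioms purely in terms of $P_a$-conjugated structure maps), so your ``bookkeeping'' paragraph is indeed where the real content lies, and it goes through as you describe.
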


In particular, since $\C_{\textrm{univ}}^{\T}$ is a quotient of $\C_{\textrm{univ}}$, the endomorphism ring of $\one$ in $\C_{\textrm{univ}}^{\T}$ is a quotient of $\winf$, which we denote by $\winf/I_{\T}=\winf^{\T}$. 

Define now a symmetric monoidal category $\E_0$ in the following way: the objects of $\E_0$ are the same as the objects of $\Bord_2^r$, and for $X,Y\in\E_0$ we have 
\begin{equation}
\Hom_{\E_0}(X,Y) = \k\Hom_{\Bord_2^r}(X,Y) \, , 
\end{equation}
the free $\k$-vector space generated by the Hom set in $\Bord_2^r$. 
Composition is defined in the obvious way. The symmetric monoidal structure on $\Bord_2^r$ induces a symmetric monoidal structure on $\E_0$. 
Take now $\E$ to be the idempotent completion of the additive closure of $\E_0$. 
Hence objects in $\E$ are direct summands of objects of the form $\bigoplus_{i=1}^nX_i$, where $X_i\in \E_0$, and 
\begin{equation}
\Hom_{\E}\Big(\bigoplus_i X_i,\bigoplus_j Y_j\Big) 
	= 
	\bigoplus_{i,j} \Hom_{\E_0}(X_i,Y_j) \, . 
\end{equation}
Hom spaces between direct summands are defined in the obvious way. 

\begin{proposition}
There is a natural equivalence of symmetric monoidal $\k$-linear categories between the category $\C_{\textrm{univ}}^{\T}$ and $\E$. 
\end{proposition}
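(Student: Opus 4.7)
The plan is to identify the two categories via a common universal property: both $\C_{\textrm{univ}}^{\T}$ and $\E$ should corepresent the 2-functor sending a $\k$-linear, additive, idempotent-complete symmetric monoidal category $\D$ to the groupoid $\Lambda_r\!\operatorname{Frob}(\D)$. For $\C_{\textrm{univ}}^{\T}$ this is essentially the content of Proposition~\ref{prop:univ2}, where evaluation at the universal object $W$ with its structure tensors $(P_a,\mu,\Delta,\eta,\varepsilon)$ identifies symmetric monoidal $\k$-linear functors $\C_{\textrm{univ}}^{\T}\to\D$ with closed $\Lambda_r$-Frobenius algebras in~$\D$. For $\E$, the definition makes $\E$ the Karoubi envelope of the additive closure of the free $\k$-linearization of $\Bord_2^r$, so symmetric monoidal $\k$-linear functors $\E\to \D$ correspond to plain symmetric monoidal functors $\Bord_2^r\to\D$, i.e.\ to $r$-spin TQFTs valued in~$\D$; by Theorem~\ref{thm:closed-r-spin-TQFT-classification} these are classified by $\Lambda_r\!\operatorname{Frob}(\D)$.

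Both corepresentations are realised by evaluation on a canonical closed $\Lambda_r$-Frobenius algebra sitting inside the respective category. Concretely, I would produce $F\colon\C_{\textrm{univ}}^{\T}\to \E$ via Proposition~\ref{prop:univ2} applied to the tautological $\Lambda_r$-Frobenius algebra in $\E$ built from the $r$-spin circles $S^1_a$, pairs-of-pants, discs, and deck transformation cylinders described after Theorem~\ref{thm:closed-r-spin-TQFT-classification}. A quasi-inverse $G\colon \E\to \C_{\textrm{univ}}^{\T}$ will be produced from the universal $\Lambda_r$-Frobenius algebra in $\C_{\textrm{univ}}^{\T}$ via Theorem~\ref{thm:closed-r-spin-TQFT-classification}, followed by the successive universal properties of the $\k$-linearization, additive closure, and Karoubi completion used to build $\E$ out of $\Bord_2^r$. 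The composites $F\circ G$ and $G\circ F$ then classify, respectively, the tautological algebras in $\E$ and in $\C_{\textrm{univ}}^{\T}$, and so are naturally isomorphic to the respective identity functors by uniqueness in the two universal properties.

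The main obstacle is ensuring that the Yoneda input is valid, i.e.\ that $\C_{\textrm{univ}}^{\T}$ itself is $\k$-linear, additive, and idempotent complete enough for the equivalence of Proposition~\ref{prop:univ2} to apply at $\D=\E$. The first two properties are immediate from the construction in \cite{meir21}. Idempotent completeness for the structure idempotents $P_a$ is essentially built in, because $W$ decomposes in $\C_{\textrm{univ}}^{\T}$ as $\bigoplus_a \operatorname{Im}(P_a)$ and every other object is generated from the summands together with duals, tensor products and direct sums. If any additional splitting is needed to match the Karoubi envelope built into~$\E$, one first replaces $\C_{\textrm{univ}}^{\T}$ by its Karoubi envelope, which enjoys the analogous universal property with respect to idempotent-complete targets; the Yoneda argument above then yields the equivalence with $\E$. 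A secondary, essentially bookkeeping, point will be checking that the symmetric monoidal structure is respected throughout, which follows because every step (Proposition~\ref{prop:univ2}, Theorem~\ref{thm:closed-r-spin-TQFT-classification}, and the $\k$-linear, additive, Karoubi completions) is itself symmetric monoidal.
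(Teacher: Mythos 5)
Your proposal is correct and follows essentially the same route as the paper: both categories are shown to corepresent the 2-functor $\D\mapsto\Lambda_r\!\operatorname{Frob}(\D)$ via Proposition~\ref{prop:univ2} on one side and the linearization/additive/Karoubi construction of $\E$ plus Theorem~\ref{thm:closed-r-spin-TQFT-classification} on the other, and the equivalence is then extracted from the matching universal properties. Your additional care about idempotent completeness of $\C_{\textrm{univ}}^{\T}$ and about explicitly exhibiting the quasi-inverse is a reasonable elaboration of what the paper leaves implicit when it ``takes $\D=\E$.''
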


\begin{proof}
It is straightforward to see that if $\D$ is a $\k$-linear idempotent complete rigid symmetric monoidal category, then the category of symmetric monoidal functors $\Bord_2^r\to \D$ is equivalent to the category of symmetric monoidal $\k$-linear functors  $\E\to \D$. 
By Theorem~\ref{thm:closed-r-spin-TQFT-classification} we know that functors $\Bord_2^r\to\D$ are equivalent to closed $\Lambda_r$-Frobenius algebras in $\D$. Proposition \ref{prop:univ2} above shows now that we have an equivalence of categories between functors from $\E$ to $\D$ and functors from $\C_{\textrm{univ}}^{\T}$ to $\D$. By taking $\D=\E$ we get the desired equivalence. 
\end{proof}

It follows from the construction of~$\E$ that the endomorphism ring of the unit object is the polynomial algebra on the set of all diffeomorphism classes of closed connected surfaces with an $r$-spin structure. 
By the equivalence above this algebra is isomorphic to $\winf^{\T}$. 
Hence we have the identifications 
\begin{align}
\winf^{\T} 
	& = 
	\k[\{\beta_d\}_{d|r}, \alpha_1,\alpha_2,\ldots]
	&&\text{ if } r \text{ is odd, }
	\\
	\winf^{\T} 
	& = 
	\k[\{\beta_d\}_{d|r}, \alpha_1^+,\alpha_1^-,\alpha_2^+,\alpha^{-}_2,\ldots]
	&&\text{ if } r \text{ is even} 	
\end{align}
for $r>2$ (and we have to add a variable $\alpha_{-1}$ for the sphere invariant in case $r\in\{1,2\}$). 

Every closed $\Lambda_r$-Frobenius algebra in $\D$ gives rise to a functor $\C_{\textrm{univ}}^{\T} \lra \D$, and hence in particular to a ring homomorphism $\End_{\C_{\textrm{univ}}^{\T}}(\one)\to \End_{\D}(\one)$, that is, a map $\chi\colon\winf^{\T}\to \End_{\D}(\one)$. 
If $\End_{\D}(\one)=\k$ we get $\chi\colon\winf^{\T}\to \k$. 
We call $\chi$ the \textsl{character of invariants} of the closed $\Lambda_r$-Frobenius algebra. 
We will think of such a $\chi$ as a function that assigns a numerical value to every diffeomorphism class of $r$-spin surfaces. 
Since $\winf^{\T}$ is the polynomial algebra on the isomorphism classes of closed connected surfaces with an $r$-spin structure, such a character gives the scalar invariants that the TQFT corresponding to the closed $\Lambda_r$-Frobenius algebra associates to such surfaces.

\subsection{The categories \texorpdfstring{$\C_{\chi}$}{C\_chi}}
\label{subsec:CategoriesCchi}

Assume now that~$\chi$ is a character as above, i.e.\ a ring homomorphism $\winf^{\T}\to \k$. 
To any two objects $X,Y\in \C_{\textrm{univ}}^{\T}$ we associate the pairing 
\begin{align}
\langle -,-\rangle\colon \Hom_{\C_{\textrm{univ}}^{\T}}(X,Y) \ot \Hom_{\C_{\textrm{univ}}^{\T}}(Y,X)
	& \to 
	\End_{\C_{\textrm{univ}}^{\T}}(\one)=\winf^{\T} 
	\nonumber
	\\
	T_1 \ot T_2
	& \mapsto 
	\Tr(T_1\circ T_2) \, ,
\end{align}
and we define 
\begin{equation}
\langle -,-\rangle_{\chi}:= \chi\circ \langle -,-\rangle\, .
\end{equation}

\begin{definition}
A morphism $f\colon X\to Y$ in $\C_{\textrm{univ}}^{\T}$ is called \textsl{$\chi$-negligible} if it is in the radical of the pairing $\langle-,-\rangle_{\chi}$. 
The subspace of $\chi$-negligible morphisms is denoted  $\Nn_{\chi}^{X,Y}\subseteq \C_{\textrm{univ}}^{\T}(X,Y)$, 
and we write $\Nn_{\chi}$ for the tensor ideal of all $\Nn_{\chi}^{X,Y}$.
\end{definition}

\begin{definition}
The category $\C_{\chi}$ is the Karoubian envelope of the quotient $\C_{\textrm{univ}}^{\T}/\Nn_{\chi}$.
\end{definition}

\begin{proposition}
The category $\C_{\chi}$ is tensor generated by a rigid object $W$ that comes with the structure of a closed $\Lambda_r$-Frobenius algebra, and $\End_{\C_{\chi}}(\one) \cong \k$. 
If $\Sigma$ is a closed surface with an $r$-spin structure, then the invariant that $W$ assigns to $\Sigma$ is exactly $\chi(\Sigma)$.
\end{proposition}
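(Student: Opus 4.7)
The plan is to verify the three claims of the proposition in sequence: the tensor generation and closed $\Lambda_r$-Frobenius algebra structure on $W$, the identification $\End_{\C_\chi}(\one) \cong \k$, and the recovery of $\chi$ as the invariant map.

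For the first claim, every object of $\C_{\textrm{univ}}$ is by construction a formal direct sum of objects of the form $W^{a,b} = W^{\ot a}\ot (W^*)^{\ot b}$, so $W$ together with its dual tensor-generates $\C_{\textrm{univ}}$. This property passes to the quotient $\C_{\textrm{univ}}^{\T}/\Nn_\chi$ and is preserved by Karoubian completion, while rigidity of $W$ is preserved under any symmetric monoidal quotient functor. The closed $\Lambda_r$-Frobenius algebra structure on $W \in \C_{\textrm{univ}}^{\T}$ is the universal one obtained from Proposition~\ref{prop:univ2} applied to the identity functor on $\C_{\textrm{univ}}^{\T}$. Since $\Nn_\chi$ is a tensor ideal, the quotient $\C_{\textrm{univ}}^{\T}\to \C_\chi$ is symmetric monoidal, so the structure tensors descend and continue to satisfy the axioms $\T$, making $W$ a closed $\Lambda_r$-Frobenius algebra in $\C_\chi$.

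For the second claim, recall that $\End_{\C_{\textrm{univ}}^{\T}}(\one) = \winf^{\T}$, and that composition in this ring agrees with the multiplication of the commutative algebra $\winf^{\T}$. For any two endomorphisms $a,b$ of $\one$ in a symmetric monoidal category, the categorical trace satisfies $\Tr(a\circ b) = a\cdot b$, since the evaluation and coevaluation of $\one$ are identities. Hence the pairing $\langle a,b\rangle_\chi$ equals $\chi(ab)$, whose radical is precisely $\ker(\chi)$. Therefore $\End(\one)$ in $\C_{\textrm{univ}}^{\T}/\Nn_\chi$ is the quotient $\winf^{\T}/\ker(\chi)$, which is isomorphic to $\k$ via $\chi$ because $\chi$ is a unital ring homomorphism onto a field. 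Karoubian completion leaves $\End(\one)$ unchanged, since $\one$ has no nontrivial idempotent endomorphisms once $\End(\one)$ is a field.

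For the third claim, a closed $r$-spin surface $\Sigma$ corresponds, via the equivalence $\C_{\textrm{univ}}^{\T}\simeq \E$ and the description of $\winf^{\T}$ as the polynomial algebra on diffeomorphism classes of such surfaces, to a generator $\Sigma \in \winf^{\T} = \End_{\C_{\textrm{univ}}^{\T}}(\one)$. The symmetric monoidal functor $F\colon \C_{\textrm{univ}}^{\T}\to \C_\chi$ sending $W$ to $W$ is (by Proposition~\ref{prop:univ2}) the TQFT associated to the closed $\Lambda_r$-Frobenius algebra $W\in \C_\chi$, and on $\End(\one)$ it factors as $\winf^{\T}\twoheadrightarrow \winf^{\T}/\ker(\chi)\xrightarrow{\sim}\k$, which is exactly $\chi$. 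Therefore the invariant that $W$ assigns to $\Sigma$ equals $\chi(\Sigma)$. The main subtle point in the whole argument is the trace calculation for the pairing on $\End(\one)$ combined with the verification that $\Nn_\chi$ is closed under tensor product (so that the quotient functor is indeed symmetric monoidal and the Frobenius structure descends); both are standard but deserve explicit mention to keep the descent of the algebraic structure to $\C_\chi$ coherent.
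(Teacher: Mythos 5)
Your proposal is correct and follows essentially the same route as the paper's own (much terser) proof: tensor generation and the Frobenius structure descend through the quotient by the tensor ideal $\Nn_\chi$ and the Karoubi envelope, the radical of $\langle-,-\rangle_\chi$ on $\End(\one)=\winf^{\T}$ is identified with $\Ker(\chi)$ so that $\End_{\C_\chi}(\one)\cong\k$, and the invariant of $\Sigma$ is its image under $\winf^{\T}\twoheadrightarrow\winf^{\T}/\Ker(\chi)$, i.e.\ $\chi(\Sigma)$. The extra details you supply (the trace computation on $\End(\one)$ and the explicit check that the radical equals $\Ker(\chi)$) are correct and merely make explicit what the paper leaves implicit.
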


\begin{proof}
The category $\C_{\textrm{univ}}^{\T}$ is already tensor generated by a rigid object $W$ that is a closed $\Lambda_r$-Frobenius algebra. This property descents directly to the quotient $\C_{\chi}$. The collection of $\chi$-negligible morphisms in $\End_{\C_{\textrm{univ}}^{\T}}(\one) = \winf^{\T}$ is exactly $\Ker(\chi)$ so we get that $\End_{\C_{\chi}}(\one) = \winf^{\T}/\Ker(\chi)\cong \k$. Under the last isomorphism, the invariant that is associated to $\Sigma$ is exactly the image of $\Sigma\in \winf^{\T}$ inside $\winf^{\T}/\Ker(\chi)$, which is $\chi(\Sigma)$. 
\end{proof}

This answers the question raised in Section~\ref{sec:Introduction}. 
For every set of invariants we have constructed here a category with a closed $\Lambda_r$-Frobenius algebra that assigns exactly these values. 
However, the categories $\C_{\chi}$ are nicely behaved only for a specific collection of characters:

\begin{definition}
A category $\C$ is called \textsl{$\k$-good} if it is rigid, abelian, enriched over finite-dimensional $\k$-vector spaces, and $\End_{\C}(\one)\cong\k$. 
\end{definition}

\begin{definition}
A character $\chi\colon\winf^{\T}\to \k$ is called \textsl{good} if the following conditions are satisfied:
\begin{enumerate}
\item All the Hom spaces in $\C_{\chi}$ are finite-dimensional. 
\item Every nilpotent endomorphism in $\C_{\chi}$ has trace zero. 
\end{enumerate}
\end{definition}

The following is an adaptation of a result in \cite{meir21} to closed $\Lambda_r$-Frobenius algebras.

\begin{theorem}\cite[Thm.\,1.1]{meir21} Let $\chi \colon \winf^{\T}\to \k$ be a character. 
The following conditions are equivalent:
\begin{enumerate}
\item The category $\C_{\chi}$ is a semisimple $\k$-good category.
\item The character $\chi$ is the character of invariants of some closed $\Lambda_r$-Frobenius algebra in some $\k$-good category.
\item The character $\chi$ is a good character. 
\end{enumerate}
\end{theorem}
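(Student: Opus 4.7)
The plan is to prove the cyclic implications $(1) \Rightarrow (2) \Rightarrow (3) \Rightarrow (1)$ by adapting the argument of \cite[Thm.\,1.1]{meir21} from its original setup of arbitrary algebraic structures to the theory $\T$ of closed $\Lambda_r$-Frobenius algebras. The first two implications will be essentially formal consequences of the universal constructions already in place, while $(3) \Rightarrow (1)$ is the substantive step and will be the main obstacle.

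The implication $(1) \Rightarrow (2)$ is tautological: if $\C_\chi$ is itself a semisimple $\k$-good category, then by construction $W \in \C_\chi$ carries the structure of a closed $\Lambda_r$-Frobenius algebra whose character of invariants is exactly $\chi$, so $\C_\chi$ itself serves as the witness target. For $(2) \Rightarrow (3)$, suppose $\chi$ arises from a closed $\Lambda_r$-Frobenius algebra in a $\k$-good category $\D$. By Proposition~\ref{prop:univ2} this data is packaged as a symmetric monoidal functor $F \colon \C_{\textrm{univ}}^{\T} \to \D$ whose action on $\End(\one) = \winf^{\T}$ is $\chi$. Since $F$ preserves traces, every $\chi$-negligible morphism of $\C_{\textrm{univ}}^{\T}$ is sent to a morphism in the radical of the trace pairing of the rigid symmetric monoidal subcategory of $\D$ generated by $F(W)$. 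A standard Fitting-type argument in that subcategory (using $\End_{\D}(\one) \cong \k$ and finite-dimensional Hom spaces) then forces this image to vanish in the semisimple quotient, so $F$ descends to a faithful symmetric monoidal functor $\overline F \colon \C_\chi \to \widetilde \D$, where $\widetilde \D$ is this quotient. Faithfulness transfers finite-dimensionality of Hom spaces to $\C_\chi$, and, since $\overline F$ is trace-preserving and injective on Hom spaces, it also transfers the trace-zero condition on nilpotent endomorphisms back to $\C_\chi$.

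The main obstacle is $(3) \Rightarrow (1)$. By construction $\C_\chi$ is a Karoubian $\k$-linear rigid symmetric monoidal category with $\End_{\C_\chi}(\one) \cong \k$ and with non-degenerate trace pairings on all Hom spaces, since the radical has been explicitly quotiented out. Given goodness, each endomorphism algebra $\End_{\C_\chi}(X)$ is a finite-dimensional $\k$-algebra with non-degenerate trace pairing in which every nilpotent element has trace zero. Its Jacobson radical is a nilpotent two-sided ideal, so for any $x$ in the radical and any $y \in \End_{\C_\chi}(X)$ the product $xy$ is nilpotent and hence $\Tr(xy) = 0$; non-degeneracy of the trace pairing then forces $x = 0$, so $\End_{\C_\chi}(X)$ is semisimple. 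Combined with the Karoubian closure this decomposes every object of $\C_\chi$ into simple summands, so $\C_\chi$ is semisimple; abelianness follows because kernels and cokernels in a pseudo-abelian semisimple $\k$-linear category with finite-dimensional Hom spaces arise from splitting idempotents in the semisimple endomorphism algebras. The essential adaptation of \cite[Thm.\,1.1]{meir21} consists in verifying that imposing the specific axioms of $\T$ (associativity, Frobenius, commutativity, twist, and deck transformation relations) interacts cleanly with the trace machinery: these axioms only alter the kernel of the projection $\C_{\textrm{univ}} \to \C_{\textrm{univ}}^{\T}$ and are compatible with the construction of $\Nn_\chi$, so that the general arguments of \emph{loc.\ cit.} carry through verbatim in the present setting.
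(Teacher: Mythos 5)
The paper does not actually prove this statement: it is quoted verbatim (with the label ``\cite[Thm.\,1.1]{meir21}'') and the only in-text justification is the remark that it is ``an adaptation of a result in \cite{meir21}''. So there is no in-paper proof to compare against, and your attempt should be judged on its own. Your cyclic scheme $(1)\Rightarrow(2)\Rightarrow(3)\Rightarrow(1)$ is the standard one for results of this type and is sound in outline: $(1)\Rightarrow(2)$ is indeed immediate from the Proposition asserting that $W\in\C_{\chi}$ is a closed $\Lambda_r$-Frobenius algebra with character of invariants $\chi$; and your $(3)\Rightarrow(1)$ argument (non-degeneracy of $\langle-,-\rangle_{\chi}$ on $\C_{\chi}$ by construction, plus ``nilpotents have trace zero'' killing the Jacobson radical of each finite-dimensional $\End_{\C_{\chi}}(X)$, plus the standard fact that a Karoubian $\k$-linear category with semisimple finite-dimensional endomorphism algebras is abelian semisimple) is exactly the expected argument.

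Two points in $(2)\Rightarrow(3)$ deserve more care than you give them. First, the ``faithful descent'' phrasing is slightly off: what one actually uses is that $\langle f,g\rangle_{\chi}=\Tr_{\D}(F(f)\circ F(g))$, so the kernel of $F$ on each Hom space is contained in $\Nn_{\chi}$ and hence $\Hom_{\C_{\chi}}(X,Y)$ is a quotient of a subspace of the finite-dimensional $\Hom_{\D}(FX,FY)$; no quotient category $\widetilde{\D}$ or Fitting argument is needed for finite-dimensionality. Second, and more substantively, your transfer of the trace-zero-on-nilpotents condition silently invokes the fact that nilpotent endomorphisms in a $\k$-good (rigid \emph{abelian}, finite-dimensional Homs, $\End_{\D}(\one)\cong\k$) category have categorical trace zero. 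This is not a formality -- categorical traces are not additive on short exact sequences in general, and this is precisely the point where abelianness of $\D$ enters; it is itself a nontrivial lemma in \cite{meir21} (and in \cite{KOK} for $r=1$). Moreover a nilpotent $f$ in $\C_{\chi}$ lifts only to some $\tilde f$ with $\tilde f^{\,n}\in\Nn_{\chi}$, so $F(\tilde f)$ need not be nilpotent in $\D$; one must first apply a Fitting decomposition to $F(\tilde f)$ in $\D$ and then use both the nilpotent-trace lemma and the vanishing of $\Tr_{\D}(F(\tilde f)^{m})$ for large $m$ on the invertible part. Since the theorem is in any case imported from \cite{meir21}, deferring these steps there is acceptable, but they are the genuine mathematical content of the implication and should be flagged as such rather than treated as bookkeeping.
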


The above theorem already gives us a necessary condition for~$\chi$ to be good:

\begin{proposition}
Let $\chi\colon\winf^{\T}\to \k$ be a good character. 
If $r$ is odd then 
\begin{equation}
\sum_{n\geq 0} \chi(\alpha_n)X^n
\end{equation}
is a rational function that is contained in the space $R:=\operatorname{span}\{\frac{1}{1-\lambda x}\}_{\lambda\in \k}$. 
If $r$ is even then both 
\begin{equation}
\sum_{n\geq 0} \chi(\alpha_n^+)X^n
	\quad \textrm{and} \quad 
	\sum_{n\geq 0} \chi(\alpha_n^-)X^n
\end{equation}
are contained in $R$.
\end{proposition}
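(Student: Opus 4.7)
The plan is to invoke the preceding theorem to reduce to a trace computation inside the semisimple $\k$-good category $\C_\chi$, and then use the semisimple decomposition to write each generating function as a finite $\k$-linear combination of $(1-\lambda X)^{-1}$'s. By the theorem, if $\chi$ is good then $\C_\chi$ is semisimple $\k$-good, carrying an internal closed $\Lambda_r$-Frobenius algebra $W = \bigoplus_{a\in\Z/r} W_a$ whose character of invariants is $\chi$. By the formulas of Section~\ref{subsec:InvariantsFromClosedLambdaRFrobeniusAlgebras}, the numbers $\chi(\alpha_n)$ and $\chi(\alpha_n^\pm)$ are then categorical traces in $\C_\chi$ of iterated handle endomorphisms of a fixed object $W_x$. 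Since $\End_{\C_\chi}(\one)=\k$ with $\k$ algebraically closed, one may decompose $W_x \cong \bigoplus_j S_j^{\oplus m_j}$ with $\End(S_j)=\k$; any $f\in \End_{\C_\chi}(W_x)$ then corresponds to a tuple $(f_j)\in \prod_j M_{m_j}(\k)$, and the categorical trace becomes $\Tr_{\C_\chi}(f) = \sum_j d_j \Tr_\k(f_j)$ with $d_j := \dim_{\C_\chi}(S_j) \in \k$.

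For $r$ odd, set $T := H_x$ as in~\eqref{eq:Hx}; then $\chi(\alpha_n) = \Tr_{\C_\chi}(T^n) = \sum_{j,k} d_j \lambda_{j,k}^n$, where the $\lambda_{j,k} \in \k$ are the eigenvalues of the block $T_j \in M_{m_j}(\k)$ counted with algebraic multiplicity. Summation yields $\sum_{n} \chi(\alpha_n) X^n = \sum_{j,k} d_j/(1-\lambda_{j,k} X) \in R$. For $r$ even, set
\[
T := h_{x+2}^+ \circ \cdots \circ h_{x-2}^+ \circ h_x^+, \qquad T' := h_{x+2}^+ \circ \cdots \circ h_{x-2}^+ \circ h_x^-,
\]
so that by~\eqref{eq:alphaEven}--\eqref{eq:alphaEvenMinus} one has $\chi(\alpha_n^+) = \Tr_{\C_\chi}(T^n)$ and $\chi(\alpha_n^-) = \Tr_{\C_\chi}(T^{n-1} T')$. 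The two identities of Lemma~\ref{lem:identities} yield $T T' = T' T$ (shifting the single $-$ past adjacent $+$'s via $h_{x-2}^+ h_x^- = h_{x-2}^- h_x^+$) and $(T')^2 = T^2$ (bringing the two $-$'s in $(T')^2$ together and applying $h_{x-2}^+ h_x^+ = h_{x-2}^- h_x^-$). Simultaneously upper-triangularising the commuting blocks $(T_j, T_j')$ over $\k$ then gives diagonals $\mu_{j,k}$ and $\nu_{j,k}$ satisfying $\nu_{j,k}^2 = \mu_{j,k}^2$, so $\nu_{j,k} = \pm\mu_{j,k}$. Hence $\chi(\alpha_n^\pm) = \sum_{j,k} \pm d_j \mu_{j,k}^n$ with signs determined block-wise in the $-$ case, and both generating functions lie in $R$ exactly as in the odd case.

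The main subtlety is the $\alpha_n^-$ case: without the relation $(T')^2 = T^2$, a block with $\mu_{j,k}=0$ but $\nu_{j,k}\neq 0$ would contribute a term $\nu_{j,k} X$ to the generating function of $\alpha_n^-$, and $X \notin R$ because the sequences $(\lambda^n)_{n\geq 0}$ for $\lambda\in\k$ are linearly independent in $\k^{\N}$ and the sequence $(0,1,0,0,\ldots)$ cannot be written as a finite combination of them. The identity $(T')^2 = T^2$, extracted from Lemma~\ref{lem:identities}, is precisely what forces $\nu_{j,k}=0$ whenever $\mu_{j,k}=0$, keeping the $\alpha_n^-$ generating function inside $R$.
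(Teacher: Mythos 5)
Your proof is correct and follows essentially the same route as the paper: both reduce the claim to traces of powers of handle endomorphisms, and both treat the $\alpha_n^-$ series by extracting the relations $TT'=T'T$ and $(T')^2=T^2$ from Lemma~\ref{lem:identities} and passing to simultaneous eigenvalues. The only difference is that where the paper delegates the step ``generating functions of traces of powers lie in $R$'' to \cite[Cor.~2.3]{meir21}, you prove it directly from the semisimplicity of $\C_\chi$ supplied by the preceding theorem, which makes the argument self-contained; your closing observation that $(T')^2=T^2$ is precisely what excludes a stray $\nu X$ contribution (with $X\notin R$) is a worthwhile point the paper leaves implicit.
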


\begin{proof}
For the case $r$ is odd and for the series $\sum_{n\geq 0} \chi(\alpha_n^+)X^n$ when $r$ is even this follows from \cite[Cor.\,2.3]{meir21}, by considering the handle morphisms. In the last series we argue as follows: write $T_1 = (h^+_{x+2})^{\mathcal Z} \circ (h^{+}_{x+4})^{\mathcal Z} \circ \dots \circ (h^{+}_{x-2})^{\mathcal Z} \circ (h^{+}_{x})^{\mathcal Z}$ and $T_2= (h^+_{x+2})^{\mathcal Z} \circ (h^{+}_{x+4})^{\mathcal Z} \circ \dots \circ (h^{+}_{x-2})^{\mathcal Z} \circ (h^{-}_{x})^{\mathcal Z}$. Then the series is actually $$\sum_{n\geq 0} \Tr(T_2^{n-1}T_1)X^n.$$ Lemma \ref{lem:identities} implies that $T_1T_2=T_2T_1$ and $T_1^2=T_2^2.$ We can then decompose the relevant space into eigenspaces for $T_1$ and $T_2$, and the result follows, again by \cite[Cor.\,2.3]{meir21}. 
\end{proof}

\subsection{Example: the constant character and Deligne's category \texorpdfstring{$\Rep(S_t)$}{Rep(S\_t)}}

We first consider the case $r=1$, and let $t\in \k$ be any number. 
Consider the character $\chi_t$ that sends all the closed surfaces to $t$.
In \cite[Sect.\,8]{meir21} it is shown that $\chi_t$ is a good character, and that the resulting category $\C_{\chi_t}$ is the category $\Rep(S_t)$ that was introduced by Deligne in \cite{Del07} as the interpolation of the representation categories of symmetric groups. 
By pulling back along $\Bord_2^r\to \Bord_2^1$ we see that the constant character $\chi_t(\Sigma) = t$ is a good character also for arbitrary~$r$. 
Intuitively, this algebra can be thought of as the commutative algebra $\k^t$ of all functions from the set with $t$ elements to the ground field, with pointwise addition and multiplication. Of course, this makes actual sense only when $t\in\N$. 
The mere existence of the last algebra enables us to prove the following: 

\begin{lemma}
The set of good characters $\winf^{\T}\to \k$ forms a $\k$-algebra.
\end{lemma}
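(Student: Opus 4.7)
The set of ring homomorphisms $\winf^{\T}\to\k$ carries a natural $\k$-algebra structure: since $\winf^{\T}$ is a polynomial ring, a character is determined by its values on the polynomial generators $\{\beta_d\}_{d\mid r}$ and $\{\alpha_n\}_{n\geq 1}$ (respectively $\{\alpha_n^{\pm}\}$ for~$r$ even), and this bijection endows the set of characters with the componentwise $\k$-algebra structure from the corresponding product of copies of~$\k$. In this algebra, the unit is the constant character $\chi_1$ and the zero element is $\chi_0$; both are good by the discussion of Deligne's $\Rep(S_t)$ above. It remains to verify closure of the good characters under sum, product, and scalar multiplication.

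The bridge between this character algebra and closed $\Lambda_r$-Frobenius algebras is supplied by Lemma~\ref{lem:plustimes}: on every connected closed $r$-spin surface~$\Sigma$ one has $\Zc_{X\oplus Y}(\Sigma)=\Zc_X(\Sigma)+\Zc_Y(\Sigma)$ and $\Zc_{X\otl Y}(\Sigma)=\Zc_X(\Sigma)\Zc_Y(\Sigma)$. Since characters are determined by their values on the polynomial generators, this promotes to the identities $\chi_{X\oplus Y}=\chi_X+\chi_Y$ and $\chi_{X\otl Y}=\chi_X\cdot\chi_Y$ inside the character algebra. Closure under scalar multiplication then reduces to closure under products: for any $\lambda\in\k$, both $\lambda\chi$ and $\chi\cdot\chi_\lambda$ send each generator $\gamma$ to $\lambda\chi(\gamma)$ and hence coincide, and $\chi_\lambda$ is good for every $\lambda\in\k$.

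To establish closure under sum and product of two good characters $\chi_1,\chi_2$, I would realise them by closed $\Lambda_r$-Frobenius algebras $X_i$ in $\k$-good categories $\C_i$ (which exist since $\chi_i$ is good) and pass to the Deligne tensor product $\C_1\boxtimes\C_2$. Base change along the standard embeddings $\C_i\hookrightarrow\C_1\boxtimes\C_2$ lifts each~$X_i$ to a closed $\Lambda_r$-Frobenius algebra $\tilde X_i$ in $\C_1\boxtimes\C_2$ whose character of invariants is still~$\chi_i$ (using that the trivial closed $\Lambda_r$-Frobenius structure on $\one$ realises $\chi_1$). Applying the constructions of Section~\ref{sec:Ex_op} inside this common ambient category then produces closed $\Lambda_r$-Frobenius algebras $\tilde X_1\oplus\tilde X_2$ and $\tilde X_1\otl\tilde X_2$ realising $\chi_1+\chi_2$ and $\chi_1\chi_2$ respectively.

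The main technical obstacle is the verification that the Deligne product of two semisimple $\k$-good categories is again semisimple $\k$-good, and that base change along the canonical embeddings is compatible with the closed $\Lambda_r$-Frobenius structure and preserves the character of invariants. These points are standard under the hypothesis that~$\k$ is algebraically closed of characteristic zero, but they form the technical crux on which the whole argument hinges.
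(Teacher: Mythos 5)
Your proposal is correct and follows essentially the same route as the paper: realise the two good characters by closed $\Lambda_r$-Frobenius algebras in $\k$-good categories, pass to the Deligne product $\D_1\boxtimes\D_2$, apply Lemma~\ref{lem:plustimes} for sums and products, and use the constant characters coming from the $\Rep(S_t)$ construction to handle scalars and the unit. You in fact spell out a few points the paper leaves implicit (goodness of the Deligne product, compatibility of base change with the $\Lambda_r$-Frobenius structure and the character), which is a welcome addition rather than a deviation.
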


\begin{proof}
Assume that $\chi_1$ and $\chi_2$ are two good characters. 
Then there are categories $\D_1$ and $\D_2$ and closed $\Lambda_r$-Frobenius algebras $W_1$ and $W_2$ that afford these characters. 
Both algebras can be regarded as algebras inside the Deligne product $\D_1\boxtimes \D_2$. 
As such, we can take their sums and products. 
By Lemma \ref{lem:plustimes} we see that pointwise addition and pointwise multiplication of good characters is again a good character, as the category $\D_1\boxtimes \D_2$ is again a good category.
Since we also have an algebra with constant character values, this also shows that the set of good characters is closed under multiplication by a scalar $t\in \k$, and the set of good characters is indeed a $\k$-algebra under pointwise addition and multiplication of characters.
\end{proof}

We are now ready to prove the following:
\begin{theorem}
If $r$ is odd then the character $\chi$ is good if and only if 
\begin{equation}
\sum_{n\in\Z_{\geqslant 1}} \chi(\alpha_n) X^n
\;\in\; 
\operatorname{span}\Big\{\frac{1}{1-\lambda X}\Big\}_{\lambda\in \k} \, . 
\end{equation}
If $r$ is even then the character $\chi$ is good if and only if 
only if 
\begin{equation}
\sum_{n\in\Z_{\geqslant 1}} \chi(\alpha_n^+)X^n\,,\; \sum_{n\in\Z_{\geqslant 1}} \chi(\alpha_n^-)X^n
\;\in\; 
\operatorname{span}\Big\{\frac{1}{1-\lambda X}\Big\}_{\lambda\in \k} \,.
\end{equation}
\end{theorem}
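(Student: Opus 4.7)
The $(\Rightarrow)$ direction is exactly the content of the preceding proposition. For $(\Leftarrow)$, the preceding lemma shows that the set $G$ of good characters $\winf^{\T}\to\k$ is a $\k$-subalgebra of the ring of functions on the generators of $\winf^{\T}$ (under pointwise addition, multiplication and scalar multiplication, each extended as a ring homomorphism from the polynomial ring $\winf^{\T}$). So my plan is to realise any given $\chi$ satisfying the span condition as a $\k$-algebra combination of characters of the concrete algebras constructed in Section~\ref{sec:ExamplesClosedLambdaRFrobeniusAlgebras}, using the decomposition $\chi=\chi_\alpha+\chi_\beta$ where $\chi_\alpha$ carries only the $\alpha$-data and $\chi_\beta$ only the $\beta$-data.

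\emph{Step 1 (the $\alpha$-part with $\beta=0$).} Assume first that $r$ is odd. The span hypothesis yields $\chi(\alpha_n)=\sum_i c_i\lambda_i^n$ for finitely many $c_i,\lambda_i\in\k$, and matching the $X^0$-coefficient of $\sum_{n\geqslant 1}\chi(\alpha_n)X^n$ with that of $\sum_i c_i/(1-\lambda_i X)$ forces $\sum_i c_i=0$. Passing to an algebraic closure if necessary, I pick $\kappa_i$ with $\kappa_i^r=\lambda_i$; by Lemma~\ref{lem:E} the character $\chi_{E_{\kappa_i}}$ is good with $\chi_{E_{\kappa_i}}(\alpha_n)=\lambda_i^n$ and $\chi_{E_{\kappa_i}}(\beta_d)=1$. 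Hence $\chi_\alpha:=\sum_i c_i\,\chi_{E_{\kappa_i}}$ is good, with $\chi_\alpha(\alpha_n)=\chi(\alpha_n)$ and $\chi_\alpha(\beta_d)=\sum_i c_i=0$. For even $r$ I instead use the blocks $(1/2)(\chi_{E_\kappa}\pm\chi_{A\otl E_{2\kappa}})$, which by Lemma~\ref{lem:E} realise $(\alpha^+_n,\alpha^-_n)=(\lambda^n,0)$ and $(0,\lambda^n)$ respectively (for $\lambda=\kappa^{r/2}$), with $\beta_d$ equal to the parity indicators $\mathbf{1}[d\ \text{odd}]$ and $\mathbf{1}[d\ \text{even}]$. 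Combining such blocks with coefficients $c_i^\pm$ satisfying $\sum c_i^+=\sum c_i^-=0$ (one relation per series, both forced by the span condition) gives a good $\chi_\alpha$ with the prescribed $\alpha^\pm$ and with $\beta_d\equiv 0$, because the parity indicators collect the two vanishing sums independently.

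\emph{Step 2 (the $\beta$-part with $\alpha=0$).} Combining Lemma~\ref{lem:invariants-pullback} with Lemmas~\ref{lem:roddinvariants} and~\ref{lem:C-invariants}, the character of $P_{r,s}^*B^{(s)}$ for an odd prime power $s\mid r$ (respectively of $P_{r,2^m}^*C^{(2^m)}$ for $2^m\mid r$ with $m\geqslant 2$) has $\alpha=0$ and $\beta_d=1+s\,\mathbf{1}[s\mid d]$ (respectively $1+2^{m+1}\,\mathbf{1}[2^m\mid d]$). Subtracting $\chi_F/2$ (with $\beta_d\equiv 2$ and $\alpha=0$) and rescaling therefore produces good characters realising the indicator $d\mapsto\mathbf{1}[s\mid d]$ with $\alpha=0$. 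The remaining case $s=2$ (needed when $v_2(r)=1$) is handled by the Arf algebra: the difference in $G$ between the unit character and $\chi_A$ realises $\beta_d=2\,\mathbf{1}[d\ \text{even}]$ up to an $\alpha^\pm$-contribution whose generating series lies in the span, which can be cancelled by subtracting an appropriate Step~1 character. Since every divisor of $r$ is uniquely determined by the set of prime powers that divide it, the indicators $\mathbf{1}[s\mid d]$ together with the constants separate points of the finite set $\{d:d\mid r\}$, so their $\k$-subalgebra of $G$ is the full algebra of $\k$-valued functions on that set. Any prescribed profile $(b_d)_{d\mid r}$ therefore arises as $\chi_\beta\in G$ with $\alpha(\chi_\beta)=0$.

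\emph{Step 3 and main obstacle.} Then $\chi:=\chi_\alpha+\chi_\beta\in G$ matches the prescribed invariants on every generator of $\winf^{\T}$, so $\chi$ is good. I expect the main obstacle to be Step~2: one must check that the pullback indicators together with the Arf-based workaround for $s=2$ really generate the full algebra of $\k$-valued functions on the divisor set of $r$, and for even $r$ one must keep the Arf contribution to $\beta_d$ properly under control while simultaneously cancelling the induced $\alpha^\pm$-values via Step~1.
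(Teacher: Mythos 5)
Your proposal is correct and follows essentially the same route as the paper: necessity is quoted from the preceding proposition, and sufficiency is obtained by expressing any character satisfying the span condition as a $\k$-algebra combination of the characters of the explicit algebras $E_\kappa$, $A$, $A\otl E_\kappa$, $B^{(s)}$, $C^{(s)}$ and $F$, using that good characters form a $\k$-algebra. You supply considerably more detail than the paper's two-line spanning claim (the $\alpha$/$\beta$ decomposition, the point-separation argument on the divisor set, and the Arf workaround for the prime power $2$), and the only loose end --- shared with the paper --- is that realising $1/(1-\lambda X)$ via $E_\kappa$ requires an $r$-th (or $r/2$-th) root of $\lambda$, which may force a field extension.
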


\begin{proof}
We have already seen that this condition is necessary. 
To show that it is sufficient, we use the fact that the set of good characters form a $\k$-algebra.
Consider the case where~$r$ is odd. 
The characters for the algebras $E_{\mu}$, and $B^{(d)}$, with $d|r$, span the set of all characters that satisfies the condition of the theorem. We are done in this case. 
If $r$ is even, we use the characters of the algebras $A$, $A\underline{\ot} E_{\mu}$, and $C^{(d)}$ for $d|r$. 
Again, they span the set of all characters that satisfy the above condition, and therefore we are done in this case as well. 
\end{proof}

\begin{remark}
All the algebras that we constructed in this paper can be realised inside a symmetric monoidal category of the form 
\begin{equation}
  \SVect_\k \boxtimes \bigboxtimes_i \Rep(S_{t_i})
\end{equation}
for some $t_i\in \k$. 
However, if~$\chi$ is a good character, it is possible that the category $\C_{\chi}$ will not be of this form.
\end{remark}

\subsection{Connection to the work of Khovanov, Ostrik, and Kononov}
\label{subsec:KOK}

In case $r=1$ the question of possible invariant values for 2-dimensional TQFTs was studied in \cite{KOK}. 
One gets the same numerical criterion \cite[Thm.\,3.7]{KOK}, where the question of abelian realisation of a given sequence was studied. 
The results in \cite{KOK} also consider the case of a field of positive characteristic. The category $\C_{\chi}$ that we construct here is the same as the category denoted $\underline{\text{DCob}}_{\alpha}$ in \cite{KOK}. 
The category $\text{Cob}_{\alpha}$ appears in \cite{meir21} as the category~$\widetilde{\C}_{\chi}$.


\begin{thebibliography}{CMM}
	
	\bibitem[CMM]{CMM}
	N.~Carqueville and F.~Montiel Montoya, 
	\textsl{Extending Landau-Ginzburg models to the point},
	\href{https://dx.doi.org/10.1007/s00220-020-03871-5}{Comm.\ Math.\ Phys.\ \textbf{379} (2020), 955--977}, 
	\href{https://arxiv.org/abs/1809.10965}{arXiv:1809.10965 [math.QA]}.
	
	\bibitem[Cz]{Czenky}
	A.~Czenky, 
	\textsl{Unoriented 2-dimensional TQFTs and the category $\textrm{Rep}(S_t \wr \mathbb{Z}_2)$}, 
	\newblock in preparation.
	
	\bibitem[De]{Del07} 
	P.~Deligne, 
	\textsl{La cat{\'e}gorie des repr{\'e}sentations du groupe sym{\'e}trique $S_t$, lorsque $t$ n'est pas un entier naturel}, 
	Algebraic groups and homogeneous spaces, Tata Inst. Fund. Res. Stud. Math., vol. 19, Tata Inst. Fund. Res., Mumbai, 2007, pp.\ 209-273

    \bibitem[GG]{Geiges:2012rs}
      H.~Geiges and J.~Gonzalo Pérez, 
      \textsl{Generalised spin structures on 2-dimensional orbifolds}, 
      \newblock 
      \href{https://projecteuclid.org/journals/osaka-journal-of-mathematics/volume-49/issue-2/Generalised-spin-structures-on-2-dimensional-orbifolds/ojm/1340197934.full}{Osaka J. Math. {\bfseries 49} (2012), 449--470}, 
      \href{https://arxiv.org/abs/1004.1979}{arXiv:1004.1979 [math.GT]}.
      
      \bibitem[KOK]{KOK}
      M.~Khovanov, V.~Ostrik, and Y.~Kononov, 
      \textsl{Two-dimensional topological theories, rational functions and their tensor envelopes}, 
      Selecta Mathematica (2022) \textbf{28}:71, 
      \href{https://arxiv.org/abs/2011.14758}{arXiv:2011.14758 [math.QA]}.

      \bibitem[Ko]{Kockbook}
      J.~Kock, 
      \textsl{Frobenius algebras and 2D topological quantum field theories}, 
      \textsl{London Mathematical Society Student Texts} \textbf{59}, Cambridge University Press, 2003.
	
	\bibitem[KR]{kr0401268}
	M.~Khovanov and L.~Rozansky, 
	\textsl{Matrix factorizations and link homology},
	\href{http://journals.impan.gov.pl/fm/Inf/199-1-1.html}{Fund. Math. \textbf{199} (2008), 1--91},
	\href{https://arxiv.org/abs/math/0401268}{arXiv:math/0401268 [math.QA]}.
	
	\bibitem[Me1]{meir20} 
	E.~Meir, 
	\textsl{Universal Rings of Invariants}, 
	International Mathematics Research Notices, Vol. 2022, No. 17, 13128--13180, 
	\href{https://arxiv.org/abs/2007.03845}{arXiv:2007.03845 [math.RT]}.

    \bibitem[{Me2}]{meir21} 
    E.~Meir, 
    \textsl{Interpolations of monoidal categories and algebraic structures by invariant theory}, 
      \href{https://arxiv.org/abs/2105.04622}{arXiv:2105.04622 [math.QA]}.

    \bibitem[{Ra}]{Randal:2014rs}
      O.~{Randal-Williams}, 
      \textsl{{Homology of the moduli spaces and mapping class groups of framed, $r$-Spin and Pin surfaces}}.
      \newblock \href{http://dx.doi.org/10.1112/jtopol/jtt029}{J. Topol. {\bfseries 7} (2014), 155--186}, 
      \href{https://arxiv.org/abs/1001.5366}{arXiv:1001.5366 [math.GT]}.
	
	\bibitem[RSP]{ReutterSchommerPries2022}
	D.~Reutter and C.~Schommer-Pries, 
	\textsl{Semisimple Field Theories Detect Stable Diffeomorphism}, 
	\href{https://arxiv.org/abs/2206.10031}{arXiv:2206.10031 [math.AT]}.
	
    \bibitem[{StSz}]{Szegedy-Stern}
      W.~{Stern} and L.~{Szegedy}, 
      \textsl{Topological field theories on open-closed $r$-spin surfaces}, 
      \newblock 
      \href{https://doi.org/10.1016/j.topol.2022.108062}{Topol. Its Appl. {\bfseries 312} (2022), 108062}, 
      \href{https://arxiv.org/abs/2004.14181}{arXiv:2004.14181 [math.QA]}.

    \bibitem[Sz]{Szegedy:2018phd}
      L.~Szegedy, 
      \textsl{State-sum construction of two-dimensional functorial field theories}, PhD thesis, Universit\"at Hamburg, 2018,
      \url{https://ediss.sub.uni-hamburg.de/handle/ediss/7848}.
      
      \bibitem[TT]{TuraevTurnerUnoriented}
      V.~Turaev and P.~Turner, 
      \textsl{Unoriented topological quantum field theory and link homology}, 
      \href{https://dx.doi.org/10.2140/agt.2006.6.1069}{Algebraic \& Geometric Topology \textbf{6} (2006), 1069--1093},  
      \href{https://arxiv.org/abs/math/0506229}{arXiv:math/0506229 [math.GT]}.

\end{thebibliography}
\end{document}